\newtheorem{theorem}{Theorem}[section]
\newtheorem{lemma}[theorem]{Lemma}
\newtheorem{e-proposition}[theorem]{Proposition}
\newdefinition{e-definition}[theorem]{Definition\ } 
\newdefinition{remark}[theorem]{Remark\ } 
\numberwithin{equation}{section}
\newcommand \al {\alpha}
\newcommand \be {\beta}
\newcommand \de {\delta}
\newcommand \ep {\varepsilon}
\newcommand \ga {\gamma}
\newcommand \Ga {\Gamma}
\newcommand \La {\boldsymbol\Lambda}
\newcommand \om {\boldsymbol\omega}
\newcommand \ph {\varphi}
\newcommand \si {\sigma}
\newcommand \Si {\Sigma}
\newcommand \bs {\boldsymbol}
\newcommand \Abs {\bs{A}}
\newcommand \fbs {\bs{f}}
\newcommand \Fbs {\bs{F}}
\newcommand \hbs {\bs{h}}
\newcommand \Hbs {\bs{H}}
\newcommand \Tbs {\bs{T}}
\newcommand \Wbs {\bs{W}\!}
\newcommand \Sibs {\bs{\Si}}
\newcommand \Nbb {\mathbb{N}}
\newcommand \Rbb {\mathbb{R}}
\newcommand \Acal {\mathcal{A}}
\newcommand \Bcal {\mathcal{B}}
\newcommand \Ccal {\mathcal{C}}
\newcommand \Fcal {\mathcal{F}} 
\newcommand \Lcal {\mathcal{L}}
\newcommand \Scal {\mathcal{S}}
\newcommand \ov {\overline}
\newcommand \nh {\hat}
\newcommand \wh {\widehat}
\newcommand \nt {\tilde}   
\newcommand \wt {\widetilde}
\newcommand \z     [1] {#1_0} 
\newcommand \zz   [1] {#1_{\! 0}} 
\newcommand \bpb [1] {#1^*_{\textup{b}}} 
\newcommand \Flaw {\dot}      
\newcommand \Claw {\ddot}       
\newcommand \Elaw {\dddot}     
\newcommand \Dlaw {\ddddot}  
\newcommand \pa {\partial}
\newcommand \na {\nabla}
\DeclareMathOperator{\divop}{div}
\DeclareMathOperator{\id}{id}
\DeclareMathOperator{\sym}{sym}
\DeclareMathOperator{\lin}{lin}
\DeclareMathOperator{\aff}{aff}
\DeclareMathOperator{\qua}{qua}
\DeclareMathOperator{\sse}{ss} 
\DeclareMathOperator{\svk}{svk} 
\DeclareMathOperator{\hke}{Hooke}
\journal{Journal de Math\'ematiques Pures et Appliqu\'ees}
\begin{document}

%==================================================================

\begin{frontmatter}

\title{The equations of elastostatics in a Riemannian manifold} 

\author[ljll]{Nastasia Grubic}
\ead{grubic@ann.jussieu.fr}

\author[ljll]{Philippe G. LeFloch}
\ead{contact@philippelefloch.org}

\author[ljll]{Cristinel Mardare\corref{cor}}
\ead{mardare@ann.jussieu.fr}

\cortext[cor]{Corresponding author}

\address[ljll]{Universit\'e Pierre et Marie Curie \& Centre National de la Recherche Scientifique,  \\
Laboratoire Jacques-Louis Lions, 4 Place Jussieu, 75005 Paris, France.
\\
Completed in December 2013. Revised in March 2014.}

\begin{abstract} 
To begin with, we identify the equations of elastostatics in a Riemannian manifold, which generalize those of classical elasticity in the three-dimensional Euclidean space. Our approach relies on the principle of least energy, which asserts that the deformation of the elastic body arising in response to given loads minimizes over a specific set of admissible deformations the total energy of the elastic body, defined as the difference between the strain energy and the potential of the loads. Assuming that the strain energy is a function of the metric tensor field induced by the deformation, we first derive the principle of virtual work and the associated nonlinear boundary value problem of nonlinear elasticity from the expression of the total energy of the elastic body. We then show that this boundary value problem possesses a solution if the loads are sufficiently small (in a sense we specify).  
\vskip 0.5\baselineskip \noindent
{\bf R\'esum\'e} \vskip 0.5\baselineskip \noindent
Dans un premier temps, nous identifions les \'equations de l'\'elastostatique dans une vari\'et\'e riemannienne, qui g\'e\-n\'e\-ralisent celles de la th\'eorie classique de l'\'elasticit\'e dans l'espace euclidien tridimensionnel. Notre approche repose sur le principe de moindre action, qui affirme que la d\'eformation du corps \'elastique sous l'action des forces externes minimise sur l'en\-semble des d\'eformations admissibles l'\'energie totale du corps \'elastique, d\'efinie comme la diff\'erence entre l'\'energie de d\'eformation et le potentiel des forces externes. Sous l'hypoth\`ese que l'\'en\'ergie de d\'eformation est une fonction du champ de tenseurs m\'etriques induit par la d\'eformation, nous d\'erivons dans un premier temps le principe des travaux virtuels et le probl\`eme aux limites associ\'e  \`a partir de l'expression de l'\'energie totale du corps \'elastique. Nous d\'emontrons ensuite que ce probl\`eme aux limites admet une solution si les forces externes sont suffisamment petites (en un sens que nous pr\'ecisons). 
\end{abstract}

\begin{keyword}
nonlinear elasticity
\sep
elastostatics
\sep
Riemannian manifold 
\sep
Korn inequality 
\sep
nonlinear elliptic system
\sep
partial differential equations
\sep 
Newton's algorithm 
\end{keyword}

\end{frontmatter}

%==================================================================

\section{Introduction} 
\label{sect1}

In this paper we study the deformation of an elastic body immersed in a Riemannian manifold in response to applied body and surface forces. We first show how the equations of elastostatics can be derived from the principle of least energy, and we then establish existence theorems for these equations. These equations generalize the classical equations of elastostatics in the three-dimensional Euclidean space, and have applications in both classical and relativistic elasticity theory; cf.~Section \ref{sect10}. The definitions and notations used, but not defined in this introduction, can be found in Section \ref{sect2}.

Alternative approaches to the modeling of elastic bodies in a Riemannian manifold can be found elsewhere in the literature; see, for instance, \cite{efrati, epstein, marsden, segev1, segev2, segev3, segev2suite} and the references therein. Our approach is akin to the one in Ciarlet \cite{ciarlet}, but is formulated in a Riemannian manifold instead of the three-dimensional Euclidean space. As such, our results can be easily compared with their counterparts in classical elasticity and in this respect can be used to model the deformations of thin elastic shells whose middle surface must stay inside a given surface in the three-dimensional Euclidean space. More specifically, letting $(N,{\nh g})$ be the three-dimensional Euclidean space and ${\z\ph}:M\to {\nh M}\subset N$ be a global local chart (under the assumption that it exists) of the reference configuration ${\nh M}:={\z\ph}(M)$ of an elastic body immersed in $N$ reduces our approach to the three-dimensional classical elasticity in curvilinear coordinates (see, for instance, \cite{ciarletG}), while letting $M={\nh M}\subset N$ and ${\z\ph}=\id_{\nh M}$ reduces our approach to the classical three-dimensional elasticity in Cartesian coordinates (see, for instance, \cite{ciarlet}). 

An outline of the paper is as follows. Section \ref{sect2} describes the mathematical framework and notation used throughout the paper. Basic notions from differential and Riemannian geometry are  briefly discussed. It is important to keep in mind that in all that follows, the physical space containing the elastic body under consideration is a differential manifold $N$ endowed with a single metric tensor ${\nh g}$, while the abstract configuration of the elastic body (by definition, a manifold whose points label the material points of the elastic body) is a differential manifold $M$ endowed with two metric tensors, one ${g}={g}[\ph]:=\ph^*{\nh g}$ induced by an unknown deformation $\ph:M\to N$, and one ${\z g}={g}[\z\ph]:=\z\ph^*{\nh g}$ induced by a reference deformation $\z\ph:M\to N$. The connection and volume on $N$ are denoted ${\nh\na}$ and ${\nh\om}$ (induced by ${\nh g}$), respectively. The connections and volume forms on $M$ are denoted ${\na}={\na}[\ph]$ and ${\om}={\om}[\ph]$ (induced by ${g}={g}[\ph]$) and  ${\zz\na}$ and ${\z\om}$ (induced by ${\z g}$). 

Tensor fields on $M$ will be denoted by plain letters, such as ${\xi}$, and their components in a local chart will be denoted with Latin indices, such as ${\xi}^i$. 
Tensor fields on $N$ will be denoted by letters with a hat, such as ${\nh\xi}$, and their components in a local chart will be denoted with Greek indices, such as $\nh\xi^\al$. 
Tensor fields on $M\times N$ will be denoted by letters with a tilde, such as ${\nt\xi}$ or ${\nt T}$, and their components in local charts will be denoted with Greek and Latin indices, such as $\nt\xi^\al$ or $\nt T^i_\al$. 

Functionals defined over an infinite-dimensional manifold, such as $\Ccal^1(M,N)$ or 
$$
\Ccal^1(TM):=\{{\xi}:M\to TM; \ {\xi}(x)\in T_xM \text{ for all } x\in M\},
$$ 
where $T_xM$ denotes the tangent space to $M$ at $x\in M$, will be denoted with letters with a bracket, such as $T[\ ]$. Functions defined over a finite-dimensional manifold, such as $M\times N$ or $T^p_qM$, will be denoted with letters with a parenthesis, such as ${\Flaw T}(\ )$. Using the same letter in $T[\ ]$ and ${\Flaw T}(\ )$ means that the two functions are related, typically (but not always) by 
$$
(T[\ph])(x)={\Flaw T}(x, \ph(x), D\ph(x)) \text{ \ for all } x\in M, 
$$
where $D\ph(x)$ denotes the differential of $\ph$ at $x$. 
In this case, the function ${\Flaw T}(\ )$ is called the constitutive law of the function ${T}[\ ]$
and the above relation is called the constitutive equation of ${T}$. 
Letters with several dots denote constitutive laws of different kind, for instance, 
$$
(T[\ph])(x)={\Flaw T}(x, \ph(x), D\ph(x))={\Claw T}(x, g[\ph](x)) ={\Elaw T}(x, E[\z\ph,\ph](x))={\Dlaw T}(x, {\xi}(x), {\zz\na}{\xi}(x)) 
$$
for all $x\in M$, where 
$$
E[\z\ph,\ph]:=\frac12({g}[\ph]-{\z g}) \text{  \  and \ } {\xi}:=\exp_{\z\ph}^{-1}\ph
$$ 
(the mapping $\exp_{\z\ph}$ is defined below). The derivative of a function $f[\ ]$ at a point $\ph$ in the direction of a tangent vector ${\eta}$ at $\ph$ will be denoted $f'[\ph]\eta$.

In Section \ref{sect3}, we define the kinematic notions used to describe the deformation of an elastic body. The main novelty is the relation 
$$
\ph=\exp_{\z\ph}{\xi}:=({\wh\exp}({\ph_0}_*\xi))\circ\z\ph
$$
between a displacement field ${\xi}\in \Ccal^1(TM)$ of a reference configuration $\z\ph(M)$ of the body and the corresponding deformation $\ph:M\to N$ of the same body. Of course, this relation only holds if the vector field ${\xi}$ is small enough, so that the exponential maps of $N$ be well defined at each point $\z\ph(x)\in N$, $x\in M$. We will see in the next sections that the exponential maps on the Riemannian manifold $N$ replace, to some extent, the vector space structure of the three-dimensional  Euclidean space appearing in classical elasticity. The most important notions defined in this section are the metric tensor field, also called the right Cauchy-Green tensor field, 
$$
{g}[\ph]:=\ph^*{\nh g},
$$
induced by a deformation $\ph:M\to N$, the strain tensor field, also called Green-St. Venant tensor field, 
$$
E[\ph,\psi]:=\frac12({g}[\psi]-g[\ph])
$$ 
associated with a reference deformation $\ph$ and a generic deformation $\psi$, and the linearized strain tensor field ($\Lcal$ denotes the Lie derivative operator on $M$; see Section \ref{sect2})
$$
e[\ph,{\xi}]:=\frac12 \Lcal_{{\xi}}(g[\ph]),
$$
associated with a reference deformation $\ph$ and  a displacement field ${\nt\xi}=(\ph_*\xi)\circ\ph$ of the configuration $\ph(M)$. 
 
In Section \ref{sect4}, we express the assumption that the body is made of an elastic material in mathematical terms. The assumption underlying our model is that the strain energy density associated with a deformation $\ph$ of the body is of the form   
$$
(\Wbs[\ph])(x):=\bs{\Elaw W}(x, (E[\z\ph,\ph])(x))\in \La^n_xM, \ x\in M, 
$$
or equivalently, 
$$
\Wbs[\ph]={\z W}[\ph]{\z\om}, \text{ \ where } ({\z W}[\ph])(x):={\z{\Elaw W}}(x, (E[\z\ph,\ph])(x))\in \Rbb, \ x\in M, 
$$
where $\La^n_xM$ denotes the space of $n$-forms on $M$ at $x\in M$, $\z\ph:M\to N$ denotes a reference deformation of the body, and ${\z\om}:=\z\ph^*{\nh\om}$ denotes the volume form on $M$ induced by $\z\ph$. 

The stress tensor field associated with a deformation $\ph$ is then defined in terms of this density by 
$$
\Sibs[\ph]:=\frac{\pa \bs{\Elaw W}}{\pa E}(\cdot, E[\z\ph,\ph]). 
$$
Other equivalent stress tensor fields, denoted $\Tbs[\ph]$, ${\nt\Tbs}[\ph]$, ${\nh\Sibs}[\ph]$, and ${\nh\Tbs}[\ph]$, are defined in terms of $\Sibs[\ph]$ by lowering and pushing forward some of its indices; cf. Remark \ref{Tij}. The novelty are the tensor fields ${\nh\Sibs}[\ph]:=\ph_*\Sibs[\ph]$ and $\Tbs[\ph]:={g}[\ph]\cdot \Sibs[\ph]$, where $\cdot$ denotes the contraction of one single index, which are not needed in classical elasticity because of the particularity of the three-dimensional Euclidean space (which possesses in particular a constant orthonormal frame field). The three other tensor fields ${\nh\Tbs}[\ph]$, ${\nt\Tbs}[\ph]$, and $\Sibs[\ph]$, correspond in classical elasticity to the Cauchy, the first Piola-Kirchhoff, and the second Piola-Kirchhoff, stress tensor fields, respectively. 

Hereafter, boldface letters denote volume forms with scalar or tensor coefficients; the corresponding plain letters denote the components of such volume forms over a fixed volume form with scalar coefficients. For instance, if $\om:=\ph^*{\nh\om}$ denotes the volume on $M$ induced by a deformation $\ph$, then 
$$
\Wbs=W\om, \quad
\Sibs={\Si}\otimes\om, \quad
\Tbs={T}\otimes\om, \quad
{\nt\Tbs}={\nt T}\otimes\om, \quad
{\nh\Tbs}={\nh T}\otimes{\nh\om}, \quad
{\nh\Sibs}={\nh\Si}\otimes{\nh\om}.
$$
In the particular case where the volume form is ${\z\om}:=\z\ph^*{\nh\om}$, where $\z\ph$ defines the reference configuration of the body, we use the notation
$$
\Wbs={\z W}\,{\z\om}, \quad
\Sibs={\z\Si}\otimes{\z\om}, \quad
\Tbs={\z T}\otimes{\z\om}, \quad
{\nt\Tbs}={\z{\nt T}}\otimes{\z\om}.
$$
Incorporating the volume form in the definition of the stress tensor field might seem redundant (only ${\z W}, {\z\Si}, {\z{\nt T}}, {\nh T}$ are defined in classical elasticity), but it has three important advantages: First, it allows to do away with the Piola transform and use instead the more geometric pullback operator. Second, it allows to write the boundary value problem of both nonlinear and linearized elasticity (equations \eqref{intro-e2} and \eqref{intro-e3}, resp. \eqref{intro-e4} and \eqref{intro-e5}, below) in divergence form, by using appropriate volume forms, viz., $\om$ in nonlinear elasticity and ${\z\om}$ in linearized elasticity, so that ${\na}\om=0$ and ${\zz\na}{\z\om}=0$. Third, the normal trace of $\Tbs=\Tbs[\ph]$ on the boundary of $M$ appearing in the boundary value problem \eqref{intro-e2} is independent of the choice of the metric used to define the unit outer normal vector field to $\pa M$, by contrast with the normal trace of ${T}={T}[\ph]$ on the same boundary appearing in the boundary value problem \eqref{intro-e3}; see relation \eqref{Tn} and the subsequent comments.
 
Section \ref{sect5} is concerned with the modeling of external forces. The main assumption is that the densities of the applied body and surface forces are of the form 
$$
\begin{aligned}
({\fbs}[\ph])(x)
&:=\bs{\Flaw f}(x,\ph(x), D\ph(x)) 
\in T^*_xM\otimes\La^n_xM, 
&& x\in M,\\
({\hbs}[\ph])(x)
&:=\bs{\Flaw{h}}(x,\ph(x), D\ph(x))
\in T^*_xM\otimes\La^{n-1}_x\Ga_2,
&&  x\in \Ga_2\subset {\pa M},
\end{aligned}
$$
where $\bs{\Flaw f}$ and $\bs{\Flaw{h}}$ are sufficiently regular functions, and $\Ga_1\cup\Ga_2=\Ga:=\pa M$ denotes a measurable partition of the boundary of $M$. 

In Section \ref{sect6}, we combine the results of the previous sections to derive the model of nonlinear elasticity in a Riemannian manifold, first as a minimization problem (Proposition \ref{PAL}), then as variational equations (Proposition \ref{VE}), and finally as a boundary value problem (Proposition \ref{BVP}).  
 The latter asserts that the deformation $\ph$ of the body must satisfy the system 
\begin{equation}
\label{intro-e2}
\aligned
 - \divop\, {\Tbs}[\ph] &={\fbs}[\ph] && \text{in }  \textup{int}M,\\
{\Tbs}[\ph]_{\nu}  &={\hbs}[\ph] && \text{on } {\Ga_2},\\
        \ph &=\z\ph && \text{on } {\Ga_1},
\endaligned
\end{equation} 
or equivalently, the system
\begin{equation}
\label{intro-e3}
\aligned
 - \divop\, {T }[\ph] &={f}[\ph] && \text{in }  \textup{int}M,\\
{T}[\ph]\cdot (\nu[\ph]\cdot g[\ph])  &={h}[\ph] && \text{on } {\Ga_2},\\
        \ph &=\z\ph && \text{on } {\Ga_1},
\endaligned
\end{equation}
where $\divop=\divop[\ph]$ and $\nu[\ph]$ respectively denote the divergence operator and the unit outer normal vector field to the boundary of $M$ induced by the metric $g=g[\ph]$. Note that the divergence operators appearing in these boundary value problems depend themselves on the unknown $\ph$. 
 
In Section \ref{sect7}, we deduce the equations of linearized elasticity from those of nonlinear elasticity, by linearizing the stress tensor field ${\Sibs}[\ph]$ with respect to the displacement field ${\xi}:=\exp_{{\z\ph}}^{-1}\ph$ of the reference configuration ${\z\ph}(M)$ of the body, assumed to be a natural state (that is, an unconstrained configuration of the body). Thus the unknown in linearized elasticity is the displacement field ${\xi}\in \Ccal^1(TM)$, instead of the deformation $\ph\in\Ccal^1(M,N)$ in nonlinear elasticity. 

The elasticity tensor field of an elastic material, whose (nonlinear) constitutive law is 
$\bs{\Elaw W}$, is defined at each $x\in M$ by 
$$
\Abs(x):= \frac{\pa^2 \bs{\Elaw W}}{\pa E^2}(x,0).  
$$
The linearized stress tensor field associated with a displacement field ${\xi}$ is then defined by 
$$
\Tbs^{\lin}[\xi]:=(\Abs:e[\z\ph,{\xi}])\cdot{\z g},
$$
where ${\z g}={\z\ph^*}{\nh g}$ and $:$ denotes the contraction of two indices (the last two contravariant indices of $\Abs$ with the two covariant indices of $e[\z\ph,{\xi}]$). The affine part with respect to ${\xi}$ of the densities of the applied forces are defined by 
$$
{\fbs}^{\aff}[{\xi}]:={\fbs}[\z\ph]+{\fbs}'[\z\ph]{\xi}
\text{ \ and \ }
{\hbs}^{\aff}[{\xi}]:={\hbs}[\z\ph]+{\hbs}'[\z\ph]{\xi},
$$
where 
${\fbs}'[\z\ph]{\xi}:=\left[\frac{d}{dt}{\fbs}[\exp_{\z\ph}(t{\xi})]\right]_{t=0}=\bs{f_1}\cdot {\xi}+\bs{f_2} : {\zz\na}{\xi}$ for some appropriate tensor fields $\bs{f_1}\in \Ccal^0(T^0_2M\otimes\La^nM)$ and $\bs{f_2}\in \Ccal^0(T^1_2M\otimes\La^nM)$ (a similar relation holds for ${\hbs}'[\z\ph]{\xi}$).

It is then shown that, in linearized elasticity, the unknown displacement field of the reference configuration $\z\ph(M)$ is the vector field ${\nt\xi}=({\z\ph}_*{\xi})\circ{\z\ph}$, where ${\xi}\in\Ccal^1(TM)$ satisfies the boundary value problem 
\begin{equation}
\label{intro-e4}
\aligned
 - {\z\divop}\, {\Tbs^{\lin}}[{\xi}]&={\fbs}^{\aff}[{\xi}] && \text{in }  \textup{int}M,\\
{\Tbs^{\lin}}[{\xi}]_{\z\nu}  &={\hbs}^{\aff}[{\xi}] && \text{on } {\Ga_2},\\
        {\xi} &=0 && \text{on } {\Ga_1},
\endaligned
\end{equation}
or equivalently, the boundary value problem
\begin{equation}
\label{intro-e5}
\aligned
 - {\z\divop}\, {\z{T}^{\lin}}[{\xi}]&={\z{f^{\aff}}}[{\xi}] && \text{in }  \textup{int}M,\\
{\z{T}^{\lin}}[{\xi}]\cdot ({\z\nu}\cdot {\z g})  &={\z{h^{\aff}}}[{\xi}]  && \text{on } {\Ga_2},\\
        {\xi} &=0 && \text{on } {\Ga_1},
\endaligned
\end{equation}
where ${\z\divop}$ and ${\z\nu}$ respectively denote the divergence operator and 
the unit outer normal vector field to the boundary of $M$ induced by the metric ${\z g}$; cf. Proposition \ref{EE-lin}.  It is also shown that these boundary value problems are equivalent to the variational equations 
\begin{equation}
\label{intro-e6}
\int_M (\Abs:e[\z\ph,{\xi}]):e[\z\ph,{\eta}] 
=\int_M{\fbs}^{\aff}[{\xi}]\cdot{\eta}
+\int_{{\Ga_2}}{\hbs}^{\aff}[{\xi}]\cdot{\eta},
\end{equation}
for all sufficiently regular vector fields ${\eta}$ that vanish on $\Ga_1$. 

In Section \ref{sect8}, we establish an existence and regularity theorem for the equations of linearized elasticity in a Riemannian manifold (eqns \eqref{intro-e4}-\eqref{intro-e6}). 
We show that the variational equations \eqref{intro-e6} have a unique solution in the Sobolev space $\{{\xi}\in H^1(TM); \ {\xi}=0 \text{ on } \Ga_1\}$ provided the elasticity tensor field $\Abs$ is uniformly positive-definite and ${\fbs}'[\z\ph]$ and ${\hbs}'[\z\ph]$ are sufficiently small in an appropriate norm. 
The key to this existence result is a Riemannian version of Korn's inequality, due to \cite{chen}, asserting that, if $\Ga_1\neq \emptyset$, there exists a constant $C_K<\infty$ such that ($\Lcal$ denotes the Lie derivative operator on $M$; see Section \ref{sect2})
$$
\|{\xi}\|_{H^1(TM)}\leq C_K \|e[\z\ph,{\xi}]\|_{L^2(S_2M)}, \ e[\z\ph,{\xi}] :=\frac12\Lcal_{{\xi}}{\z g},
$$
for all ${\xi}\in H^1(TM)$ that vanish on $\Ga_1$. The ``smallness assumption'' mentioned above depends on this constant: the smaller $C_K$ is, the larger ${\fbs}'[\z\ph]$ and ${\hbs}'[\z\ph]$ are in the existence result for linearized elasticity. 

Furthermore, when $\Ga_1=\pa M$, we show that the solution to the equations of linearized elasticity belongs to the Sobolev space $W^{m+2,p}(TM)$, $m\geq 0$, $1<p<\infty$, and satisfies the boundary value problems \eqref{intro-e3} and \eqref{intro-e4} if the data ($\pa M$, ${\z\ph}$, ${\fbs}[\z\ph]$, and ${\fbs}'[\z\ph]$) satisfies specific regularity assumptions.

In Section \ref{sect9}, we study the existence of solutions to the equations of nonlinear elasticity \eqref{intro-e2} in the particular case where $\Ga_1=\pa M$ and the applied forces and the constitutive law of the elastic material are sufficiently regular. Under these assumptions, the equations of linearized elasticity define a surjective continuous linear operator  
${\bs{\Acal}}^{\lin}[\xi]:={\z\divop}\, {{\Tbs}^{\lin}}[{\xi}]+{\fbs}'[\z\ph]{\xi}: X\to {\bs{Y}}$, where 
$$
X:=W^{m+2,p}(TM)\cap W^{1,p}_0(TM) 
\text{ and } 
{\bs{Y}}:=W^{m,p}(T^*M\otimes \La^nM), 
$$
for some exponents $m\in\Nbb$ and $1<p<\infty$ that satisfy the constraint $(m+1)p>n$, where $n$ denotes the dimension of the manifold $M$. 

Using the substitution $\ph=\exp_{\z\ph}{\xi}$ (when ${\xi}$ is small enough in the $\Ccal^0(TM)$-norm, so that the mapping $\exp_{\z\ph}:\Ccal^1(TM)\to \Ccal^1(M,N)$ is well-defined), we recast the equations of nonlinear elasticity \eqref{intro-e3}  into an equivalent boundary value problem, viz., 
$$
\aligned
 - \divop\, {\Tbs}[\exp_{\z\ph}{\xi}] 
 &={\fbs}[\exp_{\z\ph}{\xi}] 
 && \text{in }  \textup{int}M,\\
  {\xi} 
  &=0 
  && \text{on } {\pa M},
\endaligned
$$
whose unknown is the displacement field ${\xi}$. We then show that the mapping ${\bs{\Acal}}:X\to {\bs{Y}}$ defined by 
$$
{\bs{\Acal}}[{\xi}]
:=\divop\, {\Tbs}[\exp_{\z\ph}{\xi}] 
+ {\fbs}[\exp_{\z\ph}{\xi}] 
\text{ \ for all } {\xi}\in X,
$$
satisfies ${\bs{\Acal}}'[0]={{\bs{\Acal}}^{\lin}}$. Consequently, proving an existence theorem for the equations of nonlinear elasticity amounts to proving the existence of a zero of the mapping ${\bs{\Acal}}$. This is done by using a variant of Newton's method, where a zero of ${\bs{\Acal}}$ is found as the limit of the sequence 
$$
{\xi}_1:=0 \text{ and } {\xi}_{k+1}:={\xi}_k-{\bs{\Acal}}'[0]^{-1}{\bs{\Acal}}[{\xi}_k], \ k\geq 1. 
$$

Note that the constraint $(m+1)p>n$ ensures that the Sobolev space $W^{m+1,p}(T^1_1M)$, to which ${\zz\na}{\xi}$ belongs, is an algebra. This assumption is crucial in proving that the mapping ${\bs{\Acal}}:X\to {\bs{Y}}$ is differentiable, since
$$
({{\bs{\Acal}}}[{\xi}])(x)={\Dlaw{{\bs{\Acal}}}}(x,{\xi}(x), {\zz\na}{\xi}(x)), \ x\in M,
$$
for some regular enough mapping ${\Dlaw{{\bs{\Acal}}}}$, defined in terms of the constitutive laws of the elastic material and of the applied forces under consideration; cf. relations \eqref{Acal2} and \eqref{cl+}. Thus ${\bs{\Acal}}$ is a nonlinear Nemytskii (or substitution) operator, which is known to be non-differentiable if ${\xi}$ belongs to a space with little regularity. 

In addition to making regularity assumptions, we must assume that ${\fbs}'[\z\ph]$ is sufficiently small in an appropriate norm, so that the operator ${\bs{\Acal}}'[0]\in\Lcal(X,{\bs{Y}})$ is invertible; cf. Theorem \ref{exist-lin}, which establishes the existence and regularity for linearized elasticity. 

Finally, we point out that the assumptions of the existence theorem of Section \ref{sect9} are slightly weaker than those usually made in classical elasticity, where either $p>n$ is imposed instead of $(m+1)p>n$ (cf. \cite{ciarlet}), or ${\Dlaw{\fbs}}$ is assumed to belong to the smaller space $\Ccal^{m+1}(M\times TM\times T^1_1M)$ (cf. \cite{valent}).

%==================================================================

\section{Preliminaries} 
\label{sect2}

More details about the definitions below can be found in, for instance, \cite{AMR} and \cite{aubin}. 

Throughout this paper, $N$ denotes an oriented, smooth differentiable manifold of dimension $n$, endowed with a smooth Riemannian metric ${\nh g}$, while $M$ denotes either a compact, oriented, smooth differentiable manifold of dimension $n$, or $M:=\ov\Omega\subset \nt M$, where $\nt M$ is a smooth oriented differentiable manifold of dimension $n$ and $\Omega$ is a bounded, connected, open subset of $\nt M$, whose boundary $\Ga:=\pa M$ is Lipschitz-continuous.  Generic points in $M$ and $N$ are denoted $x$ and $y$, respectively, or $(x^i)_{i=1}^n$ and $(y^\al)_{\al=1}^n$ in local coordinates. To ease notation, the $n$-tuples $(x^i)$ and $(y^\al)$ are also denoted $x$ and $y$, respectively. 

The tangent and cotangent bundles of $M$ are denoted $TM:=\bigsqcup_{x\in M}T_xM$ and $T^*\! M:=\bigsqcup_{x\in M}T^*_xM$, respectively. The bundle of all $(p,q)$-tensors ($p$-contravariant and $q$-covariant) is denoted $T^p_qM:=(\otimes^p TM)\otimes (\otimes^q T^*\! M)$.  Partial contractions of one or two indices between two tensors will be denoted $\cdot$ or $:$ , respectively. 

The bundle of all symmetric $(0,2)$-tensors is denoted 
$$
S_2M :=\bigsqcup_{x\in M}S_{2, x}M\subset T^0_2M,
$$
and the bundle of all positive-definite symmetric $(0,2)$-tensors is denoted by 
$$
S^{+}_2M :=\bigsqcup_{x\in M}S^{+}_{2,x}M \subset S_2M  
$$
Analogously, the bundle of all symmetric $(2,0)$-tensors is denoted by $S^2M :=\bigsqcup_{x\in M}S^2_{ x}M$.

The bundle of all $k$-forms (that is, totally antisymmetric $(0,k)$-tensors) is denoted $\Lambda^kM:=\bigsqcup_{x\in M}\Lambda^k_{x}M$; volume forms on $M$ and on $\Ga$ (that is, nowhere-vanishing sections of $\Lambda^nM$ and of $\Lambda^{n-1}\Gamma$) will be denoted by boldface letters, such as $\om$ and $\bs{i}_{{\nu}}\om$.

Fiber bundles on $M\times N$ will also be used with self-explanatory notation. For instance, 
$$
T^*\! M\otimes TN:=\bigsqcup_{(x,y)\in M\times N}T_x^*\! M\otimes T_yN, 
$$
where $T_x^*\! M\otimes T_yN$ is canonically identified with the space $\Lcal(T_xM,T_yN)$ of all linear mappings from $T_xM$ to $T_yN$. 

The set of all mappings $\ph:M\to N$ of class $\Ccal^k$ is denoted $\Ccal^k(M,N)$. Given any mapping $\ph\in\Ccal^0(M,N)$, the pullback bundle of $T^p_qN$ by $\ph$ is denoted and defined by 
$$
\ph^*T^p_qN:=\bigsqcup_{x\in M} T^p_{q,\ph(x)}N.
$$

The pushforward and pullback mappings induced by a mapping $\ph\in\Ccal^1(M,N)$ are denoted $\ph_*:T^p_0M\to T^p_0N$ and $\ph^*:T^0_qM\to T^0_qN$, respectively. For instance, if $p=1$ and $q=2$, then 
$$
(\ph_*{\xi})^\al(\ph(x)):=\frac{\pa \ph^\al}{\pa x^i} (x){\xi}^i(x) 
 \text{ \ and \ } 
(\ph^*{\nh g})_{ij}(x):=\frac{\pa \ph^\al}{\pa x^i}(x)\frac{\pa \ph^\be}{\pa x^j}(x){\nh g}_{\al\be}(\ph(x)), 
\quad x\in M,
$$ 
where the functions $y^\al=\ph^\al(x^i)$ describe the mapping $\ph$ in local coordinates, denoted $(x^i)$ on $M$ and  $(y^\al)$ on $N$. 

The Lie derivative operators on $M$ and $N$ are denoted $\Lcal$ and ${\nh\Lcal}$, respectively. For instance, the Lie derivative of ${\nh g}$ along a vector field  ${\nh\xi}\in \Ccal^1(TN)$ is defined by 
$$
{\nh\Lcal}_{{\nh\xi}}{\nh g}:=\lim_{t\to 0} \frac1t(\ga_{{\nh\xi}}(\cdot,t)^*{\nh g}-{\nh g}),
$$
where $\ga_{{\nh\xi}}$ denotes the flow of ${\nh\xi}$. This flow is defined as the mapping $(y,t)\in N\times (-\ep,\ep)\to \ga_{{\nh\xi}}(y,t)\in N$, where $\ep>0$ is a sufficiently small parameter (whose existence follows from the compactness of $M$), and $\ga_{{\nh\xi}}(y,\cdot)$ is the unique solution to the Cauchy problem 
$$
\frac{d}{d t} \ga_{{\nh\xi}}(y,t)={\nh\xi}(\ga_{{\nh\xi}}(y,t)) \text{ \ for all }  t\in (-\ep, \ep), \text{ \ and } 
\ga_{{\nh\xi}}(y,0)=y. 
$$

The notation $\xi|_\Gamma$ designates the restriction to the set $\Gamma$ of a function or tensor field $\xi$ defined over a set that contains $\Gamma$. Given any smooth fiber bundle $X$ over $M$ and any submanifold $\Gamma\subset M$, we denote by $\Ccal^k(X)$ the space of all sections of class $\Ccal^k$ of the fiber bundle $X$, and we let
$$
\Ccal^k(X|_\Gamma):=\{S|_\Gamma; \ S\in \Ccal^k(X)\}.
$$
If $S\in \Ccal^k(X)$ is a section of a fiber bundle $X$ over $M$, then $S(x)$ denotes the value of $S$ at $x\in M$.

The tangent at $x\in M$ of a mapping $\ph\in\Ccal^{k}(M,N)$ is a linear mapping $T_x\ph\in\Lcal(T_xM,T_{\ph(x)}N)$. The section $D\ph\in \Ccal^{k-1}(T^*\! M\otimes \ph^*TN)$, defined at each $x\in M$ by 
$$
D\ph(x)\cdot \xi(x):=(T_x\ph)(\xi(x)) \text{ for all } \xi\in TM,
$$
is the differential of $\ph$ at $x$. In local charts, 
$$
D\ph(x)=\frac{\pa\ph^\al}{\pa x^i}(x) \ dx^i(x)\otimes \frac{\pa}{\pa y^\al}(\ph(x)), 
\ x\in M.
$$

Let ${\nh\na} : \Ccal^k(TN)\to \Ccal^{k-1}(T^*\! N\otimes TN)$ denote the Levi-Civita connection on the Riemannian manifold $N$ induced by the metric ${\nh g}$. Any immersion $\ph\in\Ccal^{k+1}(M,N)$ induces the metrics 
$$
{g}={g}[\ph]:=\ph^*{{\nh g}} \in \Ccal^k(S^{+}_2M)
\text{ \ and \ }
{\nt g}={\nt g}[\ph]:={\bpb\ph}{\nh g} \in \Ccal^k(S^{+}_2(\ph^*TN)),
$$
where 
$$
(\ph^*{{\nh g}})({\xi},{\eta}):={\nh g}(\ph_*{\xi},\ph_*{\eta})\circ\ph 
\text{ \ and \ } 
({\bpb\ph}{\nh g}) ({\nh\xi}\circ\ph,{\nh\eta}\circ\ph):={\nh g}({{\nh\xi}},{\nh\eta})\circ\ph,
$$
and the corresponding connections
$$
\aligned
{\na}={\na}[\ph] &: \Ccal^k(TM)\to \Ccal^{k-1}(T^*\! M\otimes TM),\\
{\nt\na}={\nt\na}[\ph]&: \Ccal^k(\ph^*TN)\to \Ccal^{k-1}(T^*\! M\otimes \ph^*TN).
\endaligned
$$
In local coordinates, we have 
$$
{g}_{ij}:=\frac{\pa\ph^\al}{\pa x^i}\frac{\pa\ph^\be}{\pa x^j}\,{\nt g}_{\al\be}, \quad 
{\nt g}_{\al\be}:={\nh g}_{\al\be}\circ\ph, 
$$
and
$$
\aligned
{\nh\na}_\al\nh\xi^\be& =\frac{\pa\nh\xi^\be}{\pa y^\al}+{\nh\Ga}_{\al\ga}^\be \nh\xi^\ga, \\
{\na}_i{\xi}^j &= \frac{\pa{\xi}^j}{\pa x^i}+{\Ga}_{ik}^j {\xi}^k,\\
{\nt\na}_i \nt\xi^\al & =\frac{\pa\nt\xi^\al}{\pa x^i}+\frac{\pa\ph^\be}{\pa x^i}{\nt\Ga}_{\be\ga}^\al \nt\xi^\ga,
\endaligned
$$
where ${\nh\Ga}_{\al\ga}^\be$, ${\Ga}_{ik}^j$, and  ${\nt\Ga}_{\be\ga}^\al:={\nh\Ga}_{\be\ga}^\al \circ\ph$, denote the Christoffel symbols associated with the metric tensors ${\nh g}$, ${g}$, and ${\nt g}$, respectively. Note that the metric tensors ${g}$ and ${\nt g}$ and the connections ${\na}$ and ${\nt\na}$ all depend on the immersion $\ph$. To indicate this dependence, the notation ${g}[\ph]$, ${\nt g}[\ph]$, ${\na}[\ph]$, and ${\nt\na}[\ph]$, will sometimes be used instead of shorter notation ${g}$, ${\nt g}$, ${\na}$, and ${\nt\na}$.

The above connections are related to one another by the relations
\begin{equation}
\label{nabla}
{\nt\na}{\nt\xi}={D\ph}\cdot{\na}{\xi}=D\ph\cdot(({\nh\na}{\nh\xi})\circ\ph)
\end{equation}
for all ${\xi}\in \Ccal^k(TM)$, ${\nh\xi}:=\ph_*{\xi}$, ${\nt\xi}:={\nh\xi}\circ\ph$, which in local coordinates read: 
\begin{equation}
\label{nabla-ij}
{\nt\na}_i\nt\xi^\al
=\frac{\pa\ph^\al}{\pa x^j} {\na}_i{\xi}^j
=\frac{\pa\ph^\be}{\pa x^i} (({\nh\na}_\be\nh\xi^\al)\circ\ph),
\end{equation}
where $\displaystyle \nt\xi^\al=\nh\xi^\al\circ\ph:=\frac{\pa\ph^\al}{\pa x^i}{\xi}^i$. Note that 
$$
{\na}{\theta} =\ph^*({\nh\na}{\nh\theta})
\text{ \ and \ }
{\nt\na}_{\eta}{\nt\xi} = ({\nh\na}_{\nh\eta}{\nh\xi})\circ\ph, 
\text{ \ where }
{\nh\eta}=\ph_*{\eta}, 
\ {\theta}=\ph^*{\nh\theta}, 
\text{ and }
{\nt\xi}={\nh\xi}\circ\ph,
$$
for all  
${\eta}\in \Ccal^{k-1}(TM)$, 
${\nh\theta}\in \Ccal^k(T^*\! N)$ and 
${\nh\xi}\in \Ccal^k(TN)$, $k\geq 1$.

The connection ${\na}$, resp. ${\nh\na}$, is extended to arbitrary tensor fields on $M$, resp. on $N$, in the usual manner, by using the Leibnitz rule. The connection ${\nt\na}$ is extended to arbitrary sections ${\nt S}\in \Ccal^k(T^p_qM\otimes \ph^*(T^r_sN))$ by using the Leibnitz rule and the connection ${\na}={\na}[\ph]$.  For instance, if $p=q=r=s=1$, then the section ${\nt\na}_{{\eta}}{\nt S}\in \Ccal^{k-1}(T^p_{q}M\otimes \ph^*(T^r_sN))$ is defined by 
$$
\aligned
({\nt\na}_{{\eta}} {\nt S})({\xi},{\si}, {\nt\zeta},{\nt\tau}) 
:= {\eta}({\nt S}({\xi},{\si}, {\nt\zeta},{\nt\tau}))
- {\nt S}({\na}_{{\eta}} {\xi},{\si}, {\nt\zeta},{\nt\tau}) 
- {\nt S}({\xi},{\na}_{{\eta}}{\si}, {\nt\zeta},{\nt\tau}) 
\\  
\quad 
- {\nt S}({\xi},{\si}, {\nt\na}_{{\eta}}{\nt\zeta},{\nt\tau}) 
- {\nt S}({\xi}, {\si}, {\nt\zeta},{\nt\na}_{{\eta}}{\nt\tau}), 
\endaligned
$$
for all sections ${\eta}\in \Ccal^{k-1}(T M)$, ${\xi}\in \Ccal^k(TM)$, ${\si}\in \Ccal^k(T^*\! M)$, ${\nt\zeta} \in \Ccal^k(\ph^*TN) $, and ${\nt\tau} \in \Ccal^k(\ph^*T^*\!N)$.

The divergence operators induced by the connections ${\na}={\na}[\ph]$, ${\nt\na}={\nt\na}[\ph]$, and ${\nh\na}$, are respectively denoted ${\divop}={\divop}[\ph]$, ${\wt\divop}={\wt\divop}[\ph]$, and ${\wh\divop}$. In particular, if ${\nt\Tbs}={\nt T}\otimes{\om}$ with ${\nt T}\in\Ccal^1(TM\otimes\ph^*T^*\!N)$ and ${\om}\in\Ccal^1(\La^nM)$, then, at each  $x\in M$,
$$
\aligned
({\wt\divop}\,{\nt T})(x) & :=({\nt\na}_i {\nt T}^{i}_\al)(x)dy^\al(\ph(x)),\\
({\wt\divop}\,{\nt\Tbs})(x) & :=({\nt\na}_i {\nt\Tbs}^{i}_{j_1...j_n\,\al})(x) 
dx^{j_1}(x)\otimes ...\otimes dx^{j_n}(x)\otimes dy^\al(\ph(x)).
\endaligned
$$
If in addition ${\na}{\om}=0$, then 
$$
{\nt\na}_{{\eta}} {\nt\Tbs}=({\nt\na}_{{\eta}} {\nt T})\otimes {\om} 
\text{ \ and \ }
{\wt\divop}\,{\nt\Tbs}=({\wt\divop}\,{\nt T})\otimes{\om}.
$$

The interior product $\bs{i}_{{\eta}}:{\nt\Tbs}\in \Ccal^0(TM\otimes \ph^*T^*\! N\otimes \La^nM)\to \bs{i}_{{\eta}}{\nt\Tbs}\in \Ccal^0(TM\otimes \ph^*T^*\! N\otimes\La^{n-1}M)$ is defined by 
$$
(\bs{i}_{{\eta}}{\nt\Tbs})({\theta}, {\nt\xi},{\zeta}_1,...,{\zeta}_{n-1}) :={\nt\Tbs}({\theta}, {\nt\xi},{\eta}, {\zeta}_1,...,{\zeta}_{n-1})
$$
for all ${\eta},{\zeta}_1,...,{\zeta}_{n-1}\in \Ccal^0(TM)$, ${\theta}\in \Ccal^0(T^*\! M)$, and ${\nt\xi}\in \Ccal^0(\ph^*TN)$, or equivalently, by
$$
\bs{i}_{{\eta}}{\nt\Tbs}={\nt T}\otimes \bs{i}_{{\eta}}{\om} 
\text{ \ if \ } {\nt\Tbs}={\nt T}\otimes{\om}.
$$  

The \emph{normal trace} of a tensor field ${\nt\Tbs}={\nt T}\otimes{\om} \in \Ccal^0(TM\otimes \ph^*T^*\! N\otimes\La^nM)$ on the boundary ${\pa M}$ is defined by 
$$
{\nt\Tbs}_{\nu}:=(\bs{i}_{{\nu}}{\nt\Tbs})\cdot({\nu}\cdot{g})\in \Ccal^0((\ph^*T^*\! N)|_{\pa M}\otimes \La^{n-1}({\pa M})),
$$
or equivalently, by
\begin{equation}
\label{Tn}
{\nt\Tbs}_{\nu}=({\nt T}\cdot ({\nu}\cdot{g}))\otimes \bs{i}_{{\nu}}{\om}  \text{ \ on } {\pa M}, 
\end{equation}
where ${\nu}$ denotes the unit outer normal vector field to ${\pa M}$ defined by the metric ${g}$. Note that the definition of ${\nt\Tbs}_{\nu}$ is independent of the choice of the Riemannian metric ${g}$, since 
$$
(\bs{i}_{\nu_1}{\nt\Tbs})\cdot(\nu_1\cdot g_1)=(\bs{i}_{\nu_2}{\nt\Tbs})\cdot( \nu_2\cdot g_2)  \text{ \ on } {\pa M} 
$$
for all Riemannian metrics $g_1$ and $g_2$ on $M$ ($\nu_i$ denotes the unit outer normal vector field to ${\pa M}$ defined by the metric $g_i$, $i=1,2$). Indeed, 
$$
(\bs{i}_{\nu_1}{\nt\Tbs})\cdot(\nu_1\cdot g_1)=g_2(\nu_1,\nu_2)[(\bs{i}_{\nu_2}{\nt\Tbs})\cdot( \nu_1\cdot g_1)]  \text{ \ on } {\pa M}
$$
and 
$$
g_2(\nu_1,\nu_2)( \nu_1\cdot g_1)=\nu_2\cdot g_2  \text{ \ on } {\pa M}.
$$

Integration by parts formulae involving either connection ${\na}$, ${\nt\na}$ and ${\nh\na}$ will be needed in Section \ref{sect6}. We establish here the formula for the connection ${\nt\na}$, since it does not seem to appear elsewhere in the literature.  Letting $M=N$ and $\ph=\id_M$ in the lemma below yields the integration by parts formulae for the other two connections ${\na}$ and ${\nh\na}$, which otherwise are classical. Recall that $\cdot$ , resp. $:$ , denotes the contraction of one, resp. two, indices (no confusion about the indices should arise).

\begin{lemma}
\label{IP}
For each ${\nt\xi}\in\Ccal^1(\ph^*TN)$ and for each ${\nt\Tbs}\in \Ccal^1(TM\otimes \ph^*T^*\! N\otimes\La^nM)$, 
$$
\int_M {\nt\Tbs}:{\nt\na}{\nt\xi} = - 
\int_M ({\wt\divop}\,{\nt\Tbs})\cdot {\nt\xi}+
\int_{{\pa M}} {\nt\Tbs}_{\nu}\cdot{\nt\xi}.
$$
\end{lemma}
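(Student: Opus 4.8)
The plan is to contract $\nt\Tbs$ against $\nt\xi$ so as to form an \emph{ordinary} vector field on $M$, and then to reduce the claimed identity to the classical divergence theorem for that vector field. This mirrors the standard proof of integration by parts for a connection, the only new feature being the bookkeeping of the extra $\ph^*T^*\!N$ index carried by $\nt\Tbs$.

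First I would fix the volume form. Writing $\na=\na[\ph]$ for the Levi-Civita connection of $g=g[\ph]=\ph^*\nh g$ and $\om:=\ph^*\nh\om$ for the induced volume form, we have $\na\om=0$, so $\La^nM$ is a parallel line bundle and there is a unique $\nt T\in\Ccal^1(TM\otimes\ph^*T^*\!N)$ with $\nt\Tbs=\nt T\otimes\om$. By the identities recorded just before the lemma, $\wt\divop\,\nt\Tbs=(\wt\divop\,\nt T)\otimes\om$, while the two contractions appearing in the statement read $\nt\Tbs:\nt\na\nt\xi=(\nt T:\nt\na\nt\xi)\,\om$ and $(\wt\divop\,\nt\Tbs)\cdot\nt\xi=((\wt\divop\,\nt T)\cdot\nt\xi)\,\om$. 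Thus it suffices to work with the scalar coefficients relative to the fixed $\om$.

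Next comes the key step, a Leibniz rule. Set $Z:=\nt T\cdot\nt\xi\in\Ccal^1(TM)$, that is $Z^i=\nt T^i_\al\nt\xi^\al$, where the $\ph^*TN$ and $\ph^*T^*\!N$ indices are contracted away, so that $Z$ is a genuine vector field on $M$. Since $\nt\na$ was defined to satisfy the Leibniz rule, to reduce to $\na$ on pure $M$-tensors, and to be compatible with the contraction of the $N$-indices, I would deduce the pointwise identity
\[
\divop Z=(\wt\divop\,\nt T)\cdot\nt\xi+\nt T:\nt\na\nt\xi,
\]
coming from $\na_iZ^i=\nt\na_i(\nt T^i_\al\nt\xi^\al)=(\nt\na_i\nt T^i_\al)\nt\xi^\al+\nt T^i_\al\,\nt\na_i\nt\xi^\al$. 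Multiplying by $\om$ turns the right-hand side into exactly the two bulk integrands of the lemma. I would then integrate over $M$ and invoke the divergence theorem for $Z$ and the parallel volume form: since $\na\om=0$ one has $(\divop Z)\,\om=d(\bs{i}_Z\om)$, so Stokes' theorem gives $\int_M(\divop Z)\,\om=\int_{\pa M}\bs{i}_Z\om$. On $\pa M$ I would split $Z$ into its $g$-normal and tangential parts; the tangential part contributes an $n$-form evaluated on $n$ vectors tangent to the $(n-1)$-dimensional boundary and hence vanishes, leaving $\bs{i}_Z\om=g(Z,\nu)\,\bs{i}_\nu\om$ on $\pa M$. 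Because $g(Z,\nu)=g_{ij}\nu^j\nt T^i_\al\nt\xi^\al=(\nt T\cdot(\nu\cdot g))\cdot\nt\xi$, the definition \eqref{Tn} of the normal trace gives $g(Z,\nu)\,\bs{i}_\nu\om=\nt\Tbs_\nu\cdot\nt\xi$, and combining the three computations yields the assertion.

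The only genuinely delicate point is the Leibniz identity of the middle step: one must check that $\nt\na$ commutes with the contraction of the $\ph^*TN$ index against the $\ph^*T^*\!N$ index, and that the fully contracted object $Z$ is differentiated by $\na$ exactly as an $M$-vector field. This is, however, precisely the content of the way $\nt\na$ was defined (by the Leibniz rule together with $\na$ on the $M$-indices), so it is a matter of careful index bookkeeping rather than a real obstruction; the analytic ingredient, Stokes' theorem for a $\Ccal^1$ vector field on the Lipschitz domain $M$, is classical.
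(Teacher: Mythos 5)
Your proposal is correct and follows essentially the same route as the paper's proof: writing $\nt\Tbs=\nt T\otimes\om$, applying the Leibniz rule to $\na_i(\nt T^i_\al\nt\xi^\al)$, converting the bulk divergence term to a boundary integral via $d(\bs{i}_Z\om)$ and Stokes' theorem, and then splitting $Z=\nt T\cdot\nt\xi$ into normal and tangential parts on $\pa M$ to identify the boundary integrand with $\nt\Tbs_\nu\cdot\nt\xi$. The only cosmetic difference is that the paper passes through the Lie derivative $\Lcal_Z\om$ before invoking Cartan's formula, whereas you state $(\divop Z)\,\om=d(\bs{i}_Z\om)$ directly; these are the same identity.
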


\begin{proof}
Let ${\om}$ denote the volume form induced by the metric ${g}$ and let ${\nt T} \in \Ccal^1(TM\otimes \ph^*T^*\! N)$ be defined by ${\nt\Tbs}={\nt T}\otimes{\om}$. Then 
$$
{\nt\Tbs}:{\nt\na}{\nt\xi}=({\nt T}: {\nt\na}{{\nt\xi}})\,{\om}, \quad 
{\wt\divop}\,{\nt\Tbs}\cdot{\nt\xi}=({\wt\divop}\,{\nt T}\cdot{\nt\xi})\,{\om},
$$
and 
$$
{\nt\Tbs}_{\nu}\cdot{\nt\xi}
=(\bs{i}_{{\nu}}{\nt\Tbs}\cdot({\nu}\cdot{g}))\cdot{\nt\xi}
=[({\nt T}\cdot({\nu}\cdot{g}))\cdot{\nt\xi}] \,\bs{i}_{{\nu}}{\om};
$$
hence proving the integration by parts formula of Lemma \ref{IP} is equivalent to proving that 
$$
\int_M ({\nt T}: {\nt\na}{{\nt\xi}})\,{\om}=-\int_M ({\wt\divop}\,{\nt T}\cdot{\nt\xi})\,{\om}+\int_{{\pa M}}[({\nt T}\cdot({\nu}\cdot{g}))\cdot{\nt\xi}] \,\bs{i}_{{\nu}}{\om}.
$$

Since ${\nt T}:{\nt\na}{\nt\xi} = {\nt T}^{i}_\al{\nt\na}_i \nt\xi^\al ={\na}_i({\nt T}^{i}_\al\nt\xi^\al)-({\nt\na}_i {\nt T}^{i}_\al)\nt\xi^\al$, we have
$$
\aligned
\int_M ({\nt T}: {\nt\na}{{\nt\xi}})\, {\om}
&=\int_M{\divop}({\nt T}\cdot {\nt\xi})\,{\om} -\int_M ({\wt\divop}{\nt T}\cdot {\nt\xi})\, {\om} 
=\int_M \Lcal_{{\nt T}\cdot{\nt\xi}}\, {\om}-\int_M ({\wt\divop}{\nt T}\cdot{\nt\xi})\, {\om}, 
\endaligned
$$
where $\Lcal$ denotes the Lie derivative on $M$. The first integral of the right-hand side can be written as
$$
\int_M \Lcal_{{\nt T}\cdot{\nt\xi}}\, {\om}
=\int_M d (\bs{i}_{{\nt T}\cdot{\nt\xi}}{\om})
=\int_{{\pa M}} \bs{i}_{{\nt T}\cdot{\nt\xi}}{\om}.
$$
Let ${\nu} $ be the unit outer normal vector field to ${\pa M}$ defined by the metric ${g}$. 
Since 
$$
{\nt T}\cdot{\nt\xi}={g}({\nt T}\cdot{\nt\xi},{\nu}){\nu}+\{{\nt T}\cdot{\nt\xi} - {g}({\nt T}\cdot{\nt\xi},{\nu}){\nu}\} \text{ \ on } {\pa M}, 
$$
and since the vector field $\{{\nt T}\cdot{\nt\xi}-{g}({\nt T}\cdot{\nt\xi},{\nu}){\nu}\}$ is tangent to ${\pa M}$, the integrand of the last integral becomes
$$
\aligned
 \bs{i}_{{\nt T}\cdot{\nt\xi}}{\om} 
& = \bs{i}_{{g}({\nt T}\cdot{\nt\xi},{\nu}){\nu}} {\om} 
 ={g}({\nt T}\cdot{\nt\xi},{\nu})\bs{i}_{{\nu}} {\om}
 =({\nt T}\cdot{\nt\xi})\cdot({\nu}\cdot{g})\,\bs{i}_{{\nu}} {\om}
  =[({\nt T}\cdot({\nu}\cdot{g}))\cdot{\nt\xi}] \,\bs{i}_{{\nu}}{\om} 
  \text{ \ on } {\pa M}.
\endaligned
$$
Therefore,
$$
\int_M ({\nt T}: {\nt\na}{{\nt\xi}})\,{\om}=-\int_M ({\wt\divop}\,{\nt T}\cdot{\nt\xi})\,{\om}+\int_{{\pa M}}[({\nt T}\cdot({\nu}\cdot{g}))\cdot{\nt\xi}] \,\bs{i}_{{\nu}}{\om}.
$$
\end{proof}

All functions and tensor fields appearing in Sections \ref{sect3}-\ref{sect7} are of class $\Ccal^k$ over their domain of definition,  with $k$ sufficiently large so that all differential operators be defined in the classical sense (as opposed to the distributional sense). Functions and tensor fields belonging to Sobolev spaces on the Riemannian manifold $(M,{\z g})$, where ${\z g}:={\z\ph}^*{\nh g}$ denotes the pullback of the Riemannian metric ${\nh g}$ by a reference deformation ${\z\ph}\in\Ccal^1(M,N)$, will be used in Sections \ref{sect8}-\ref{sect9} in order to prove existence theorems for the equations of elastostatics introduced in Sections \ref{sect6} and \ref{sect7}. The Sobolev space $W^{k,p}(TM)$ is defined for each $k\in\Nbb$ and $1\leq p<\infty$ as the completion in the Lebesgue space $L^p(TM)$ of the space $\Ccal^k(TM)$ with respect to the norm 
$$
\| {\xi} \|_{k,p}=\| {\xi} \|_{W^{k,p}(TM)}:=\Big\{ \int_M 
\Big(|{\xi}|^p+\sum_{\ell=1}^k|{\zz\na}^\ell {\xi} |^p\Big) 
\,{\z\om} \Big\}^{1/p},
$$
where 
$$
\aligned
|{\zz\na}^\ell {\xi}|
&:=\{{\z g}({\zz\na}^\ell {\xi},{\zz\na}^\ell {\xi})\}^{1/2}\\
&=
\left\{
{({g_0})}_{i j} {({g_0})}^{i_1 j_1}...{({g_0})}^{i_\ell j_\ell}  ({\zz\na})_{i_1...i_\ell}{\xi}^i({\zz\na})_{j_1...j_\ell}{\xi}^j
\right\}^{1/2}.
\endaligned
$$

The Sobolev space $W^{k,p}_0(TM)$ is defined as the closure in $W^{k,p}(TM)$ of the space 
$$
\Ccal^k_c(TM):=\{{\xi}\in \Ccal^k(TM); \ \mathrm{support}({\xi})\subset \mathrm{int}M\}.
$$
We will also use the notation $H^k(TM) := W^{k, 2}(TM)$ and $H^k_0(TM) := W^{k, 2}_0(TM)$.

%==================================================================
 
\section{Kinematics}
\label{sect3}

Consider an elastic body undergoing a deformation in a Riemannian manifold $(N,{\nh g})$ in response to external forces. Let the material points of the body be identified with the points of a manifold $M$, hereafter called the \emph{abstract configuration} of the body. Examples of abstract configurations of a body are a subset ${\z N}\subset N$ that the body occupies in the absence of external forces, or the range of a global chart of ${\z N}$ (under the assumption that it exists). Unless otherwise specified, the manifolds $M$ and $(N,{\nh g})$ satisfy the same regularity assumptions as in the previous section. 

All the kinematic notions introduced below are natural extensions of their counterparts in classical elasticity. Specifically, if $(N,{\nh g})$ is the three-dimensional Euclidean space and if the reference configuration of the elastic body is described by a global chart with $M$ as its range, then our definitions coincide with the classical ones in curvilinear coordinates; see, for instance, \cite{ciarletG}.

A \emph{deformation} of the body is an immersion $\Ccal^1(M,N)$ that preserves orientation and  satisfies the axiom of impenetrability of matter. This means that 
$$
\aligned
& \det D\ph(x)>0 \text{ \ for all }x \in M, \ \\
& \ph|_{\textup{int}M}:\textup{int}M\to N \text{ is injective},
\endaligned
$$
where $\textup{int}M$ denotes the interior of $M$. Note that $\ph$ needs not be injective on the whole $M$ since self-contact of the deformed boundary may occur. 

A \emph{displacement field} of the configuration $\ph(M)$ of the body is a section ${\nt\xi}\in \Ccal^1(\ph^*TN)$. It is often convenient to identify displacement fields of $\ph(M)$ with vector fields on ${\xi}\in\Ccal^1(TM)$ by means of the bijective mapping 
$$
{\xi} \to {\nt\xi}:=(\ph_*{\xi})\circ\ph.
$$
When no confusion should arise, a vector field ${\xi}\in\Ccal^1(TM)$ will also be called displacement field.

\begin{remark}
An example of displacement field is the velocity field of a body: If $\psi(t):M\to N$ is a time-dependent family of deformations and $\ph:=\psi(0)$, then ${\nt\xi}:=\frac{d\psi}{dt}(0)$ is a displacement field of the configuration $\ph(M)$.
\end{remark}

Deformations $\psi:M\to N$ that are close in the $\Ccal^0(M,N)$-norm (the smallness assumption is specified below) to a given deformation $\ph\in\Ccal^1(M,N)$ are canonically related to the displacement fields ${\nt\xi}\in \Ccal^0(\ph^*TN)$ of the configuration $\ph(M)$ of the body 
by the relation 
$$
\psi=(\wh\exp\, {\nh\xi})\circ\ph, \ \ {\nh\xi}\circ\ph={\nt\xi}, 
$$
where $\wh\exp$ denotes the exponential maps on $N$. When ${\nt\xi}=(\ph_*{\xi})\circ\ph$ is defined by means of a vector field ${\xi}\in\Ccal^0(TM)$ on the abstract configuration $M$, we let
\begin{equation}
\label{def-exp}
\psi=\exp_\ph{\xi}:=(\wh\exp\, \ph_*{\xi})\circ\ph.
\end{equation}
Of course, these relations only make sense if 
$
|{\nh\xi}(\ph(x))|=|(\ph_*{\xi})(\ph(x))|< {\nh\de}({\ph(x)})
$ 
for all $x\in M$, where ${\nh\de}(y)$ denotes the injectivity radius of $N$ at $y\in N$ (i.e., ${\nh\de}(y)$ is the largest radius for which the exponential map at $y$ is a diffeomorphism). 

Let ${\nh\de}(\ph(M)):=\min_{y\in \ph(M)}{\nh\de}(y)$ be the injectivity radius of the compact subset $\ph(M)$ of $N$, and define the set 
\begin{equation}
\label{C0-phi}
\Ccal^0_{\ph}(TM):=\{{\xi}\in \Ccal^0(TM); \ \|\ph_*{\xi}\|_{\Ccal^0(TN|_{\ph(M)})}<{\nh\de}(\ph(M))\}.
\end{equation}
It is then clear from the properties of the exponential maps on $N$ that the mapping 
$$
\exp_\ph=\wh\exp\circ D\ph :\Ccal^0_{\ph}(TM) \to \Ccal^0(M,N)
$$
is a $\Ccal^1$-diffeomorphism onto its image. Together with its inverse, denoted $\exp_{\ph}^{-1}$, this diffeomorphism will be used in Sections \ref{sect7}-\ref{sect9} to transform the equations of elasticity in which the unknown is the deformation $\ph$ of a body into equivalent equations in which the unknown is the displacement field ${\xi}$ of a given configuration ${\z\ph}(M)$ of the body; of course, the two formulations are equivalent only for vector fields $\xi:=\exp_{{\z\ph}}^{-1}\ph$ that are sufficiently small in the $\Ccal^0(TM)$-norm.

\begin{remark}
(a) 
The relation $\psi=\exp_{\ph}{\xi}$ means that, for each $x\in M$, $\psi(x)$ is the end-point of the geodesic arc in $N$ with length $|{\xi}(x)|$ starting at the point $\ph(x)$ in the direction of $(\ph_*{\xi})(\ph(x))$.

(b) 
The relation ${\xi}=\exp_{\ph}^{-1}\psi$ means that, for each $x\in M$, ${\xi}(x)$ is the pullback by the immersion $\ph$ of the vector that is tangent at $\ph(x)$ to the geodesic arc joining $\ph(x)$ to $\psi(x)$ in $N$ and whose norm equals the length of this geodesic arc. 
\end{remark}

The \emph{metric tensor field}, also called the \emph{right Cauchy-Green tensor field}, associated with a deformation $\ph\in\Ccal^1(M,N)$ is the pullback by $\ph$ of the metric ${\nh g}$ of $N$, i.e., 
$$
g[\ph]:=\ph^*{\nh g}.
$$
Note that the notation $C:={g}[\ph]$ is often used in classical elasticity.

The \emph{strain tensor field}, also called the \emph{Green-St Venant tensor field}, associated with two deformations $\ph,\psi\in\Ccal^1(M,N)$ is defined by 
$$
E[\ph,\psi]
:=\frac12(g[\psi]-g[\ph]). 
$$
The first argument $\ph$ is considered as a reference deformation, while the second argument $\psi$ is an arbitrary deformation.

The \emph{linearized strain tensor field}, also called the \emph{infinitesimal strain tensor field}, associated with a deformation $\ph\in \Ccal^1(M,N)$ and a vector field ${\xi}\in \Ccal^1(TM)$ (recall that $(\ph_*{\xi})\circ\ph$ is then a displacement field of the configuration $\ph(M)$ of the body) is the linear part with respect to ${\xi}$ of the mapping ${\xi}\mapsto E[\ph,\exp_\ph{\xi}]$, i.e., 
$$
e[\ph,{\xi}]:=\left[\frac{d}{d t}E[\ph,\exp_\ph(t{\xi})]\right]_{t=0}.
$$
Explicit expressions of $e[\ph,{\xi}]$ are given in Proposition \ref{e} below.

\begin{remark}
\label{E-law}
Let ${\z\ph}\in \Ccal^1(M,N)$ be a reference deformation and let ${\z g}={g}[{\z\ph}]:={\z\ph^*}{\nh g}$. 
Given any $(x,y)\in M\times N$ and any $F,G\in T_x^*\! M\otimes T_yN$, let $(F,G)^*:T^0_{2,y}N\to T^0_{2,x}M$ denote the pullback mapping, defined in terms of the bilinear mapping $(F,G):T^0_{2,x}M\to T^0_{2,y}N$ by letting  
\begin{equation*}
\label{FG}
((F,G)^*{\nh\tau})({\xi},{\eta})
:={\nh\tau}(F{\xi},G{\eta}) 
\text{ \ for all } ({\xi},{\eta})\in T_xM\times T_xM 
\text{ and all } {\nh\tau}\in T^0_{2,y}N, 
\end{equation*} 
and let 
$$
{\Flaw g}(x,y,F):=(F,F)^*{\nh g}(y) 
\text{ \ and \ } 
{\Flaw E}(x,y,F):=\frac12({\Flaw g}(x,y,F)-{\z g}(x)).
$$
Then
$$
g[\ph](x)={\Flaw g}(x,\ph(x),D\ph(x)) \text{ \ and \ } E[\ph,{\z\ph}](x)= {\Flaw E}(x,\ph(x),D\ph(x)).
$$
The mappings ${\Flaw g}$ and ${\Flaw E}$ defined in this fashion are called the \emph{constitutive laws} of the right Cauchy-Green tensor field ${g}[\ph]$ and of the Green-St Venant tensor field $E[{\z\ph},\ph]$ associated with a deformation $\ph$. 
\end{remark}

The linearized strain tensor field $e[\ph,{\xi}]$ can be expressed either in terms of the Lie derivative on $M$ or on $N$, or in terms of either of the connections defined in the previous section, as we now show. 
Recall that $\cdot$ denotes the (partial) contraction of one single index of two tensors.

\begin{e-proposition}
\label{e}
Given any immersion $\ph\in\Ccal^1(M,N)$ and any vector field ${\xi}\in\Ccal^1(TM)$, define the vector fields 
$$
{\nh\xi}={\nh\xi}[\ph]:=\ph_*{\xi}\in \Ccal^1(T\ph(M)) \text{ \ and \ } 
{\nt\xi}={\nt\xi}[\ph]:=({\ph_*{\xi}})\circ\ph\in\Ccal^1(\ph^*TN),
$$
and the corresponding one-form fields
$$
{\xi}^\flat={\xi}^\flat[\ph]:={g}[\ph] \cdot {\xi}, \quad 
\nh\xi^\flat=\nh\xi^\flat[\ph]:={\nh g}\cdot {\nh\xi}[\ph], \text{ and } 
\nt\xi^\flat=\nt\xi^\flat[\ph]:={\nt g}[\ph]\cdot {\nt\xi}[\ph],
$$ 
where ${g}={g}[\ph]:=\ph^*{{\nh g}}$ and ${\nt g}={\nt g}[\ph]:={\bpb\ph}{\nh g}$ (see Section \ref{sect2}). Let ${\na}={\na}[\ph]$, ${\nt\na}={\nt\na}[\ph]$, and ${\nh\na}$, respectively denote the connections induced by the metric tensors ${g}$, ${\nt g}$, and ${\nh g}$, and let $\Lcal$ and ${\nh\Lcal}$ respectively denote the Lie derivative operators on $M$ and on $N$. Then  
$$
e[\ph,{\xi}]
 =\frac12\Lcal_{{\xi}}{g}
 =\frac12\ph^*({\nh \Lcal}_{{\nh\xi}}{\nh g})
$$
and 
\begin{equation}
\label{e's}
\aligned
e[\ph,{\xi}]
&=\frac12 ({\na}{\xi}^\flat+({\na}{\xi}^\flat)^T)
  =\frac12 ({g}\cdot{\na}{\xi}+({g}\cdot{\na}{\xi})^T)\\
& =\frac12\ph^*({\nh\na}\nh\xi^\flat+({\nh\na}\nh\xi^\flat)^T) 
   = \frac12\ph^*({\nh g}\cdot{\nh\na}{\nh\xi}+({\nh g}\cdot {\nh\na}{\nh\xi})^T)\\
&=\frac12( D\ph\cdot{\nt\na}\nt\xi^\flat+(D\ph\cdot{\nt\na}\nt\xi^\flat)^T)
  =\frac12 ({\nt g}\cdot D\ph\cdot{\nt\na}{\nt\xi}+({\nt g}\cdot D\ph\cdot{\nt\na}{\nt\xi})^T). 
\endaligned
\end{equation}
In local charts, equations \eqref{e's} are equivalent to the relations
\begin{equation}
\label{eij's}
\aligned
e_{ij}[\ph,{\xi}]
& = \frac12({\na}_i{\xi}_j + {\na}_j{\xi}_i)
   = \frac12({g}_{jk}{\na}_i{\xi}^k+ {g}_{ik}{\na}_j{\xi}^k)\\
& = \frac12\frac{\pa\ph^\al}{\pa x^i}\frac{\pa\ph^\be}{\pa x^j}({\nh\na}_\be{\nh\xi}_\al 
    + {\nh\na}_\al{\nh\xi}_\be)\circ\ph \\
& =\frac12\Big(\frac{\pa\ph^\be}{\pa x^i}{\nt\na}_j{\nt\xi}_\be 
    + \frac{\pa\ph^\be}{\pa x^j}{\nt\na}_i{\nt\xi}_\be\Big),
\endaligned
\end{equation}
where, at each $x\in M$, 
${\xi}^\flat(x)={\xi}_i(x)dx^i(x)$,
${\nh\xi}^\flat(y)={\nh\xi}_\al(y) dy^\al(y)$, 
and  
${\nt\xi}^\flat(x)={\nt\xi}_\al(x)dy^\al(\ph(x))$.
\end{e-proposition}

\begin{proof}  
For each $t$ in a neighborhood of zero, define the deformations
$$
\ph(\cdot,t):=\exp_\ph(t{\xi}) \text{ \ and \ } \psi(\cdot,t):=\ga_{{\nh\xi}}(\cdot,t)\circ\ph,
$$
where ${\nh\xi}\in \Ccal^1(TN)$ denotes any extension of the section $\ph_*{\xi} \in \Ccal^1(T\ph(M))$ and $\ga_{{\nh\xi}}$ denotes the flow of ${\nh\xi}$ (see Section \ref{sect2}). By definition, 
$$
e[\ph, {\xi}]
=\left[\frac{d}{dt}E[\ph,\ph(\cdot,t)]\right]_{t=0}
=\lim_{t\to 0}\frac{\ph(\cdot,t)^*{\nh g}-\ph^*{\nh g}}{2t}.
$$

Since 
$$
\frac{\pa\ph}{\pa t}(x,0)
= \frac{\pa \psi}{\pa t}(x,0)
={\xi}(x) \text{ for all } x\in M,
$$
it follows from the above expression of $e[\ph,\xi]$ that 
$$
e[\ph,{\xi}]=\lim_{t\to 0}\frac{\psi(\cdot,t)^*{\nh g}-\ph^*{\nh g}}{2t}.
$$
Then the definition of the Lie derivative yields
$$
\aligned
e[\ph,{\xi}]
&=\ph^*\Big(\lim_{t\to 0} \frac{\ga_{{\nh\xi}}(\cdot,t)^*{\nh g}-{\nh g}}{2t}\Big)
=\frac12\ph^*({\nh \Lcal}_{{\nh\xi}}{\nh g})\\
&=\frac12\ph^*({\nh \Lcal}_{\ph_*{\xi}}{\nh g})
=\frac12\Lcal_{{\xi}}(\ph^*{\nh g})
=\frac12\Lcal_{{\xi}}{g}.
\endaligned
$$

Expressing the Lie derivative ${\nh \Lcal}_{{\nh\xi}}{\nh g}$ in terms of the connection ${\nh\na}$ gives
$$
\aligned
e_{ij}[\ph, {\xi}]
& = \frac12\frac{\pa\ph^\al}{\pa x^i}\frac{\pa\ph^\be}{\pa x^j}\big( {\nh g}_{\al\ga}{\nh\na}_\be\nh\xi^\ga  + {\nh g}_{\be\ga}{\nh\na}_\al\nh\xi^\ga \big)\circ\ph\\
& = \frac12\frac{\pa\ph^\al}{\pa x^i}\frac{\pa\ph^\be}{\pa x^j}\big({\nh\na}_\be{\nh\xi}_\al  + {\nh\na}_\al{\nh\xi}_\be\big)\circ\ph.
\endaligned
$$
This implies in turn that 
$$
\aligned
e_{ij}[\ph,{\xi}]
& = \frac12({g}_{ik}{\na}_j{\xi}^k+{g}_{jk}{\na}_i{\xi}^k)\\
& = \frac12({\na}_j{\xi}_i+{\na}_j{\xi}_j),
\endaligned
$$
and
$$
\aligned
e_{ij}[\ph, {\xi}]
& =\frac12\Big\{{\nt g}_{\al\ga}\frac{\pa\ph^\al}{\pa x^i}{\nt\na}_j\nt\xi^\ga  + {\nt g}_{\be\ga}\frac{\pa\ph^\be}{\pa x^j}{\nt \na}_i{\nt \xi}^\ga\Big\}\\
& =\frac12\Big\{\frac{\pa\ph^\al}{\pa x^i}{\nt\na}_j{\nt\xi}_\al  + \frac{\pa\ph^\be}{\pa x^j}{\nt\na}_i{\nt\xi}_\be\Big\}.
\endaligned
$$
\end{proof}

\begin{remark}
(a) 
Given any vector field ${\nh\xi}\in \Ccal^1(TN)$, define the linearized strain tensor field 
\begin{equation}
\label{e-hat}
{\nh e}[{\nh\xi}]
:=\frac12{\nh\Lcal}_{{\nh\xi}}{\nh g}
   =\frac12({\nh\na}\nh\xi^\flat+({\nh\na}\nh\xi^\flat)^T) 
     = \frac12({\nh g}\cdot{\nh\na}{\nh\xi}+({\nh g}\cdot {\nh\na}{\nh\xi})^T)
       \in \Ccal^0(S_2N). 
\end{equation}
Proposition \ref{e} shows that 
\begin{equation}
\label{e=e-hat}
{e}[\ph,{\xi}]=\ph^*({\nh e}[\ph_*{\xi}]).
\end{equation}

(b) A vector field ${\xi}\in\Ccal^1(TM)$ defines two families of deformations, $\ph(\cdot,t)$ and $\psi(\cdot,t)$, both starting at $\ph$ with velocity ${\nt\xi}=(\ph_*{\xi})\circ\ph$; see the proof of Proposition \ref{e}. Note that $\ph(\cdot,t)$ depends on the metric tensor field ${\nh g}$ of the manifold $N$, while $\psi(\cdot,t)$ depends only on the differential structure of the manifold $M$.
\end{remark}

%==================================================================

\section{Elastic materials}   
\label{sect4}

The behavior of elastic bodies in response to applied forces clearly depends on the elastic material of which they are made. Thus, before studying this behavior, one needs to specify this material by means of a constitutive law, i.e., a relation between deformations and stresses inside the body. Note that a constitutive law is usually given only for deformations $\ph$ that are close to a reference deformation $\z\ph$, so that plasticity and heating do not occur. 

We assume in this paper that the body is made of a hyperelastic material satisfying the axiom of frame-indifference, that is, an elastic material whose behavior is governed by a \emph{stored energy function} ${\Flaw{\Wbs}}:={\zz{\Flaw{W}}}{\z\om}$ satisfying the relation \eqref{FI} below. The \emph{stress tensor field} associated with a deformation $\ph:M\to N$ of the body will then be defined by any one of the sections $\Sibs[\ph]$, ${\nh\Sibs}[\ph]$, $\Tbs[\ph]$, ${\nt\Tbs}[\ph]$, and ${\nh\Tbs}[\ph]$ (see Definition \ref{defT}), which are related to each other by the formulae  \eqref{Tbs's} of Proposition \ref{propT} below.

Let a reference configuration ${\z\ph}(M)\subset N$ of the body be given by means of an immersion $\z\ph\in \Ccal^2(M,N)$. The metric tensor fields and the connections induced by $\z\ph$ on $TM$ and on $\z\ph^*TN$ are denoted by (see Section \ref{sect2})
$$
{\z g}:={g}[{\z\ph}], \ 
{\z{\nt g}}:={\nt g}[{\z\ph}], \ 
{\zz\na}:={\na}[{\z\ph}], \
{\zz{\nt\na}}:={\nt\na}[{\z\ph}]. 
$$
The volume form induced by $\z\ph$, or equivalently by the metric tensor field ${\z g}$,  on the manifold $M$ is denoted ${\z\om}:=\z\ph^*{\nh\om}$. 

The \emph{strain energy} corresponding to a deformation $\ph$ of an hyperelastic body is defined by 
$$
I[\ph]:=\int_M \Wbs[\ph]=\int_M {\z W}[\ph]{\z\om},
$$
where the $n$-form field $\Wbs[\ph]={\z W}[\ph]{\z\om}\in L^1(\La^nM)$ is of the form 
$$
(\Wbs[\ph])(x):={\Flaw{\Wbs}}(x,\ph(x), D\ph(x))={\zz{\Flaw{W}}}(x,\ph(x), D\ph(x)){\z\om}(x), \ x\in M,
$$
for some given mapping ${\Flaw{\Wbs}}(x,y,\cdot)={\zz{\Flaw{W}}}(x,y,\cdot)\, {\z\om}(x): T_x^*\! M\otimes T_yN\to \La^n_{x}M$, $(x,y)\in M\times N$, called the \emph{stored energy function} of the elastic material constituting the body.

We say that the stored energy function satisfies the axiom of material frame-indifference if 
$$
{\zz{\Flaw{W}}}(x,y,F)={\zz{\Flaw{W}}}(x,y',RF)
$$
for all $x\in M$, $y\in N$, $y'\in N$, $F\in T_x^*\! M\otimes T_yN$, and all isometries $R\in T_y^*\! N\otimes T_{y'}N=\Lcal(T_yN,T_{y'}N)$. 
In this case, the polar decomposition theorem applied to the linear mapping $F$ implies that, for each $x\in M$, there exist mappings ${\z{\Claw{W}}}(x,\cdot):S^{+}_{2,x}M\to \Rbb$ and ${\zz{\Elaw{W}}}(x,\cdot):S_{2,x}M\to \Rbb$ such that 
\begin{equation}
\label{FI}
{\zz{\Flaw{W}}}(x,y,F)={\z{\Claw{W}}}(x,C)={\zz{\Elaw{W}}}(x,E) \text{ \ for all } F\in T_x^*\! M\otimes T_yN,
\end{equation}
where the tensors $C$ and $E$ are defined in terms of $F$ by (see Remark \ref{E-law})
$$
C={\Flaw g}(x,y,F):=(F,F)^*({\nh g}(y))
\text{ \ and \ } 
E={\Flaw E}(x,y,F):=\frac12(C-{\z g}(x)).
$$ 
Hence the axiom of material frame-indifference implies that, at each $x\in M$, 
\begin{equation}
\label{w}
\aligned
(\Wbs[\ph])(x)
:\!&={\Flaw{\Wbs}}(x,\ph(x), D\ph(x))={\zz{\Flaw{W}}}(x,\ph(x), D\ph(x)){\z\om}(x)\\
&={\Claw{\Wbs}}(x, ({g}[\ph])(x))  = {\z{\Claw{W}}}(x, ({g}[\ph])(x)){\z\om}(x)\\
&=\bs{\Elaw W}(x, (E[{\z\ph}, \ph])(x)) ={\zz{\Elaw{W}}}(x, (E[{\z\ph},\ph])(x)){\z\om}(x), 
\endaligned
\end{equation}
where
$$
\aligned
(g[\ph])(x)&={\Flaw g}(x,\ph(x),D\ph(x))=(\ph^*{\nh g})(x) ,\\
(E[{\z\ph}, \ph])(x) & ={\Flaw E}(x,\ph(x),D\ph(x))=\frac12((g[\ph])(x)-{\z g}(x)).
\endaligned
$$

Let $(x,y)\in M\times N$. The Gateaux derivative of the mapping ${\Flaw{\Wbs}}(x,y,\cdot):T^*_xM\otimes T_yN\to \La^n_{x}M$ at $F\in T^*_xM\otimes T_yN$ in the direction $G\in T_x^*\! M\otimes T_yN$ is defined by
$$
\frac{\pa{\Flaw{\Wbs}}}{\pa F}(x,y,F):G=\lim_{t\to 0}\frac1t\Big\{{\Flaw{\Wbs}}(x,y,F+tG)-{\Flaw{\Wbs}}(x,y,F) \Big\}. 
$$

The \emph{constitutive law} of an elastic material whose stored energy function is ${\Flaw{\Wbs}}$ is the mapping that associates to each $(x,y)\in M\times N$ and each $F\in \Lcal(T_xM,T_yN)=T^*_xM\otimes T_yN$ the tensor 
\begin{equation}
\label{cl1}
{\Flaw{\nt\Tbs}}(x,y,F)={\zz{\Flaw{\nt{T}}}}(x,y,F)\otimes{\z\om}(x):=\frac{\pa{\Flaw{\Wbs}}}{\pa F}(x,y, F)=\frac{\pa{\zz{\Flaw{W}}}}{\pa F}(x,y, F)\otimes{\z\om}(x)
\end{equation}
in $(T_xM\otimes T_y^*\! N)\otimes \La^{n}_{x} M$.

The \emph{constitutive law} of an elastic material whose stored energy function is $\bs{\Elaw W}$ is the mapping associating to each $x\in M$ and each $E\in S_{2,x}M$ the tensor
\begin{equation}
\label{cl2}
{\Elaw{\Sibs}}(x,E)={\zz{\Elaw{\Si}}}(x,E)\otimes{\z\om}(x):=\frac{\pa\bs{\Elaw W}}{\pa E}(x,E)=\frac{\pa{\zz{\Elaw{W}}}}{\pa E}(x,E)\otimes{\z\om}(x)
\end{equation}
in $S^2_xM\otimes \La^n_xM$. 
 
The next lemma establishes a relation between the constitutive laws ${\Flaw{\nt\Tbs}}$ and ${\Elaw\Sibs}$ when the corresponding stored energy functions ${\Flaw\Wbs}={\z{\Flaw W}}\,{\z\om}$ and $\bs{\Elaw W}={\z{\Elaw W}}\,{\z\om}$ are related by \eqref{FI}.

\begin{lemma}
\label{T-Si}
Let the stored energy functions ${\Flaw\Wbs}={\z{\Flaw W}}\,{\z\om}$ and $\bs{\Elaw W}={\z{\Elaw W}}\,{\z\om}$ satisfy  \eqref{FI}.
Then 
$$
{\Flaw{\nt\Tbs}}(x,y,F)={\nh g}(y)\cdot F \cdot {\Elaw{\Sibs}}(x,E), \text{ \ where } 
E={\Flaw E}(x,y,F)=\frac12\{(F,F)^*{\nh g}(y)-{\z g}(x)\},
$$
for all linear operators $F\in T^*_xM\otimes T_yN=\Lcal(T_xM,T_yN)$.
\end{lemma}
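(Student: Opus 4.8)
The plan is to read the frame-indifference relation \eqref{FI} as the statement that ${\Flaw\Wbs}(x,y,F)={\bs{\Elaw W}}(x,{\Flaw E}(x,y,F))$, with ${\Flaw E}(x,y,F)=\frac12\{(F,F)^*{\nh g}(y)-{\z g}(x)\}$ as in Remark \ref{E-law}, and then to differentiate this composition with respect to $F$. Applying the chain rule in an arbitrary direction $G\in T^*_xM\otimes T_yN$ and invoking the definitions \eqref{cl1} and \eqref{cl2}, I would obtain
$$
{\Flaw{\nt\Tbs}}(x,y,F):G=\frac{\pa{\Flaw\Wbs}}{\pa F}(x,y,F):G={\Elaw\Sibs}(x,E):\Big(\frac{\pa{\Flaw E}}{\pa F}(x,y,F):G\Big),
$$
the outer contraction $:$ on the right pairing the two contravariant $M$-indices of ${\Elaw\Sibs}$ with the two covariant $M$-indices of the symmetric tensor $\frac{\pa{\Flaw E}}{\pa F}(x,y,F):G$.

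Next I would compute the derivative of ${\Flaw E}$. Since ${\z g}(x)$ does not depend on $F$ and the map $F\mapsto(F,F)^*{\nh g}(y)$ is quadratic, its Gateaux derivative is its polarization; writing ${\Flaw g}_{ij}={\nh g}_{\al\be}F^\al_iF^\be_j$ in a local chart gives
$$
\Big(\frac{\pa{\Flaw E}}{\pa F}(x,y,F):G\Big)_{ij}=\frac12{\nh g}_{\al\be}\big(G^\al_iF^\be_j+F^\al_iG^\be_j\big).
$$
Denoting by ${\Si}^{ij}$ the scalar-coefficient components of ${\Elaw\Sibs}(x,E)={\zz{\Elaw\Si}}(x,E)\otimes{\z\om}(x)$ and substituting, the right-hand side of the first display becomes $\frac12{\Si}^{ij}{\nh g}_{\al\be}(G^\al_iF^\be_j+F^\al_iG^\be_j)\,{\z\om}$.

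The point that makes everything collapse is the symmetry ${\Si}^{ij}={\Si}^{ji}$, which is guaranteed by \eqref{cl2} since ${\Elaw\Sibs}$ takes values in $S^2_xM\otimes\La^n_xM$ (the derivative of a function of the symmetric argument $E\in S_{2,x}M$). Because of this symmetry the two polarized terms are equal after relabeling the summation indices, so that
$$
{\Flaw{\nt\Tbs}}(x,y,F):G=\big({\nh g}_{\al\be}F^\be_j{\Si}^{ji}\big)G^\al_i\,{\z\om}=\big({\nh g}(y)\cdot F\cdot{\Elaw\Sibs}(x,E)\big):G,
$$
and since $G$ is arbitrary this is exactly the asserted identity. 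I expect no conceptual difficulty: the chain rule and the derivative of the quadratic map are routine, and the only thing requiring care is the index and volume-form bookkeeping, together with the observation that it is precisely the symmetry of ${\Si}$ furnished by \eqref{cl2} that merges the symmetrized derivative of ${\Flaw E}$ into the single one-index contraction ${\nh g}\cdot F\cdot{\Elaw\Sibs}$.
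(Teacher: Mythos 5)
Your proposal is correct and follows essentially the same route as the paper's own proof: chain rule applied to ${\Flaw\Wbs}={\bs{\Elaw W}}(\cdot,{\Flaw E})$, polarization of the quadratic map $F\mapsto(F,F)^*{\nh g}(y)$, and the symmetry of ${\Elaw\Sibs}$ (together with that of ${\nh g}$) to merge the two polarized terms into the single contraction ${\nh g}(y)\cdot F\cdot{\Elaw\Sibs}(x,E)$. The only difference is that you carry out the computation in local coordinates whereas the paper keeps the pullback notation $(F,G)^*{\nh g}(y)$ throughout, which is purely notational.
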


\begin{proof}
It suffices to prove that ${\zz{\Flaw{\nt{T}}}}(x,y,F)={\nh g}(y)\cdot F \cdot {\zz{\Elaw{\Si}}}(x,E)$. Since ${\zz{\Flaw{W}}}(x,y,F)={\zz{\Elaw{W}}}(x, E)$, the chain rule implies that, for each $G\in T_x^*\! M\otimes T_yN$, 
$$
{\zz{\Flaw{\nt{T}}}}(x,y,F): G
=\frac{\pa {\zz{\Flaw{W}}}}{\pa F}(x,y,F):G
=\frac{\pa {\zz{\Elaw{W}}}}{\pa E}(x,E):\Big(\frac{\pa {\Flaw E}}{\pa F}(y,F):G\Big).
$$
Besides, 
$$
\aligned
\frac{\pa {\Flaw E}}{\pa F}(y,F):G
&=\lim_{t\to 0}\frac1{2t}\Big\{(F+tG,F+tG)^*{\nh g}(y)-(F,F)^*{\nh g}(y)\Big\}\\
&=\frac12\Big\{(F,G)^*{\nh g}(y)+(G,F)^*{\nh g}(y)\Big\}.
\endaligned
$$
Since the tensors ${\nh g}(y)$ and ${\z\Si}(x,E)=\frac{\pa {\zz{\Elaw{W}}}}{\pa E}(x,E)$ are both symmetric, the last two relations imply that 
$$
{\zz{\Flaw{\nt{T}}}}(x,y,F): G={\zz{\Elaw{\Si}}}(x,E) : (F,G)^*{\nh g}(y),
$$
which is the same as 
$$
{\zz{\Flaw{\nt{T}}}}(x,y,F): G=\left\{{\nh g}(y)\cdot F\cdot {\zz{\Elaw{\Si}}}(x,E)\right\}: G.
$$
\end{proof}

We are now in a position to define the stress tensor field associated with a deformation $\ph\in\Ccal^1(M,N)$ of an elastic body, a notion that plays a key role in all that follows.

\begin{e-definition}
\label{defT}
Let ${\z g}:=\z\ph^*{\nh g}$ and ${\z\om}:=\z\ph^*{\nh\om}$ respectively denote the metric tensor field and the volume form induced by a reference deformation ${\z\ph} \in \Ccal^1(M,N)$, let ${g}[\ph]:=\ph^*{\nh g}$ and ${\om}[\ph]:=\ph^*{\nh\om}$ respectively denote the metric tensor field and the volume form induced by a generic deformation $\ph\in\Ccal^1(M,N)$, and let 
$$
E[\z\ph,\ph]:=\frac12({g}[\ph]-{\z g})
$$
denote the strain tensor field associated with the deformations ${\z\ph}$ and $\ph$. Let ${\Elaw{\Sibs}}$ denote   the constitutive law defined by \eqref{cl2}. 

(a)
The \emph{stress tensor field} associated with a deformation $\ph$ is either of the following sections
$$
\aligned
& \Sibs[\ph] := {\Elaw{\Sibs}}(\cdot,E[\z\ph,\ph]) 
, && \qquad {\nh\Sibs}[\ph] := \ph_*(\Sibs[\ph]) 
, \\
& {\Tbs}[\ph]:={g}[\ph]\cdot {\Sibs}[\ph] 
, && \qquad {\nh\Tbs}[\ph] := {\nh g}\cdot {\nh\Sibs}[\ph] 
,\\ 
&   {\nt\Tbs}[\ph] := {\nt g}[\ph]\cdot D\ph \cdot \Sibs[\ph] 
, && \qquad
\endaligned 
$$
where $\cdot$ denotes the contraction of one index (no ambiguity should arise) and $\ph_*(\Sibs[\ph]):=\ph_*(\Si[\ph])\otimes{\nh\om}$ for each $\Sibs[\ph]=\Si[\ph]\otimes{\om}[\ph]$.

(b) The tensor fields 
${\Si}[\ph], {\z\Si}[\ph] \in \Ccal^0(S^2M)$
and 
${T}[\ph], {\z T}[\ph]  \in \Ccal^0(T^1_1M)$
and
${\nt T}[\ph], {\z{\nt T}}[\ph]  \in \Ccal^0(TM\otimes \ph^*T^*\! N)$
and 
${\nh\Si}[\ph] \in  \Ccal^0(S^2N|_{\ph(M)})$ and 
${\nh T}[\ph]  \in \Ccal^0(T^1_1N|_{\ph(M)})$, 
defined by 
$$
\aligned
& \Sibs[\ph] = {\Si}[\ph]\otimes{\om}[\ph]={\z\Si}[\ph]\otimes{\z\om}, 
&&\qquad {\nh\Sibs}[\ph]={\nh\Si}[\ph]\otimes{\nh\om},
\\
& \Tbs[\ph] = {T}[\ph]\otimes{\om}[\ph]={\z T}[\ph]\otimes{\z\om}, 
&& \qquad {\nh\Tbs}[\ph] ={\nh T}[\ph]\otimes{\nh\om},
\\
& {\nt\Tbs}[\ph] = {\nt T}[\ph]\otimes{\om}[\ph]={\z{\nt T}}[\ph]\otimes{\z\om},
&& \qquad 
\endaligned
$$
are also called stress tensor fields.

(c) The \emph{first Piola-Kirchhoff}, the \emph{second Piola-Kirchhoff}, and the \emph{Cauchy},  stress tensor fields associated with the deformation $\ph$ are the sections ${\z{\nt T}}[\ph]$, ${\z\Si}[\ph]$, and ${\nh T}[\ph]$, respectively.
\end{e-definition}

\begin{remark}
\label{Tij}
(a) 
The stress tensor fields ${\Si}[\ph]$, ${\z{\Si}}[\ph]$, and ${\nh\Si}[\ph]$, are symmetric.  

(b)
The stress tensor fields ${\Si}[\ph]$, ${T}[\ph]$, ${\nt T}[\ph]$, ${\nh\Si}[\ph]$, and ${\nh T}[\ph]$ are obtained from each other by lowering and raising indices in local charts. Specifically, if at each $x\in M$, 
$$
\aligned
& {\Si}[\ph](x) = {\Si}^{ij}(x) \ \frac{\pa}{\pa x^i}(x)\otimes \frac{\pa}{\pa x^j}(x),
&& \quad {\nh\Si}[\ph](\ph(x)) = {\nh\Si}^{\al\be}(\ph(x))\ \frac{\pa}{\pa y^\al}(\ph(x))\otimes \frac{\pa}{\pa y^\be}(\ph(x)),
\\
& {T}[\ph](x) = {T}^i_j(x)\ \frac{\pa}{\pa x^i}(x)\otimes dx^j(x), 
&& \quad {\nh T}[\ph](\ph(x)) = {\nh T}^\al_\be(\ph(x))\ \frac{\pa}{\pa y^\al}(\ph(x))\otimes dy^\be(\ph(x)),
\endaligned
$$
$$
{\nt T}[\ph](x) = {\nt T}^i_\be(x)\ \frac{\pa}{\pa x^i}(x)\otimes dy^\be(\ph(x)), 
$$
then  
\begin{equation}
\label{T-coeff}
\aligned
{T}^i_j = {g}_{jk}{\Si}^{ik}, \quad
{\nt T}^i_\al = {\nt g}_{\al\be}\frac{\pa\ph^\be}{\pa x^j}{\Si}^{ij}, \quad
{\nh\Si}^{\al\be}\circ\ph = \frac{\pa\ph^\al}{\pa x^i} \frac{\pa\ph^\be}{\pa x^j} {\Si}^{ij}, \text{ \ and \ } 
{\nh T}^\al_\be = {\nh g}_{\be\tau}{\nh\Si}^{\al\tau},
\endaligned
\end{equation}
where ${\nh g}_{\al\be}$, ${g}_{ij}$, and ${\nt g}_{\al\be}=({\nh g}_{\al\be}\circ\ph)$ respectively denote the components of the metric tensor fields ${\nh g}$, $g[\ph]=\ph^*{\nh g}$, and ${\nt g}[\ph]:={\nh g}\circ\ph$.

(c) The components of the stress tensor fields $\Sibs[\ph]$, $\Tbs[\ph]$, and ${\nt \Tbs}[\ph]$, over the volume forms ${\om}[\ph]$ and ${\z\om}$ are related to one another by 
\begin{equation}
\label{om-omz}
{\Si}[\ph]=\rho[\ph]\, {\z\Si}[\ph], \ 
{T}[\ph]=\rho[\ph]\, {\z T}[\ph], \ 
{\nt T}[\ph]=\rho[\ph]\, {\z{\nt T}}[\ph],
\end{equation}
where the function $\rho[\ph]:M\to\Rbb$ is defined by $\rho[\ph]{\om}[\ph]={\z\om}$. In local charts,
\begin{equation*}
\label{rho}
\rho(x)
=\frac{\det(\frac{\pa\z\ph^\al}{\pa x^i}(x))}{\det(\frac{\pa\ph^\al}{\pa x^i}(x))} \text{ \ for all \ } x\in M.
\end{equation*}
\end{remark}

The next proposition gathers for later use several properties of the stress tensor fields $\Tbs[\ph]$, ${\nt\Tbs}[\ph]$ and ${\nh\Tbs}[\ph]$.

\begin{e-proposition}
\label{propT}
(a) 
Let ${\Flaw{\nt\Tbs}}$ be the constitutive law defined by \eqref{cl1}. Then 
$$
({\nt\Tbs}[\ph])(x)={\Flaw{\nt\Tbs}}(x,\ph(x),D\ph(x)) \in (T_xM\otimes T_{\ph(x)}^*\! N)\otimes \La^n_{x}M, \ x\in M.
$$  

(b) 
The stress tensor fields appearing in Definition \ref{defT}  
are related to one another by 
\begin{equation}
\label{Tbs's}
\aligned
&{\Sibs}[\ph]: e[\ph,{\xi}] 
={\Tbs}[\ph]: {\na}{\xi}
={\nt\Tbs}[\ph]: {\nt\na}{\nt\xi}
=\ph^*({\nh\Tbs}[\ph]: {\nh\na}{\nh\xi})
=\ph^*({\nh\Sibs}[\ph]: {\nh e}[{\nh\xi}]),\\
&{\Si}[\ph]: e[\ph,{\xi}] 
={T}[\ph]: {\na}{\xi}
={\nt T}[\ph]: {\nt\na}{\nt\xi}
=({\nh T}[\ph]: {\nh\na}{\nh\xi})\circ\ph
=({\nh\Si}[\ph]: {\nh e}[{\nh\xi}])\circ\ph
\endaligned
\end{equation}
for all vector fields ${\xi}\in \Ccal^1(TM)$, where 
$$
{\Tbs}[\ph]: {\na}{\xi}:=({T}[\ph]: {\na}{\xi})\otimes\om[\ph]
\text{ \ and \ } 
{\nh\Tbs}[\ph]: {\nh\na}{\nh\xi}:=({\nh T}[\ph]: {\nh\na}{\nh\xi})\otimes{\nh\om}.
$$
As above, the vector fields ${\xi}$, ${\nt\xi}$ and ${\nh\xi}$ appearing in these relations are related to to one another by means of the formulae 
$$
{\nt\xi}=(\ph_*{\xi})\circ\ph 
\text{ \ and \ }
{\nh\xi}=\ph_*{\xi}.
$$
\end{e-proposition}

\begin{proof}
The relation of part (a) of Proposition \ref{propT} is an immediate consequence of Lemma \ref{T-Si}. The relations of part (b) are equivalent in a local chart to the relations (with self-explanatory notations):
$$
{\Si}^{ij} e_{ij}[\ph,{\xi}]
={T}^i_k \, {\na}_i\xi^k
={\nt T^i_\al}\, {\nt\na}_i {\nt\xi^\al}
=({\nh T^\be_\al}\ {\nh\na}_\be{\nh\xi^\al})\circ\ph
=({\nh \Si^{\be\tau}}\ {\nh e}_{\be\tau}[{\nh\xi}])\circ\ph.
$$

Using the relations 
${\Si}^{ij}={\Si}^{ji}$ and ${T}^i_k={g}_{jk}{\Si}^{ij}$ (cf. Remark \ref{Tij}), 
and noting that 
$e_{ij}[\ph,{\xi}]= \frac12({g}_{jk}{\na}_i{\xi}^k+ {g}_{ik}{\na}_j{\xi}^k)$ (cf. Theorem \ref{e}), 
we first obtain 
$$
{\Si}^{ij} e_{ij}[\ph,{\xi}]={\Si}^{ij}{g}_{jk}{\na}_i{\xi}^k={T}^i_k \, {\na}_i\xi^k. 
$$

We next infer from the relations 
${T}^i_k=\frac{\pa\ph^\al}{\pa x^k}{\nt T^i_\al}$ (cf. Remark \ref{Tij}) 
and 
${\nt\na}_i\nt\xi^\al=\frac{\pa\ph^\al}{\pa x^k} {\na}_i{\xi}^k$ (cf. relations \eqref{nabla-ij}) 
that  
$$
{T}^i_k \, {\na}_i\xi^k={\nt T^i_\al}\,(\frac{\pa\ph^\al}{\pa x^k}{\na}_i\xi^k)={\nt T^i_\al}\, {\nt\na}_i\nt\xi^\al. 
$$

Furthermore, since 
${\nt T^i_\al}={\nt g}_{\al\tau}\frac{\pa\ph^\tau}{\pa x^k}{\Si}^{ik}$ and  
${\nh\Si}^{\tau\be}\circ\ph=\frac{\pa\ph^\tau}{\pa x^k}\frac{\pa\ph^\be}{\pa x^i}{\Si}^{ik}$ and 
${\nh T_\al^\be}={\nh g}_{\al\tau} {\nh\Si}^{\tau\be}$ (cf. Remark \ref{Tij}), 
and since 
${\nt\na}_i\nt\xi^\al=\frac{\pa\ph^\be}{\pa x^i} (({\nh\na}_\be\nh\xi^\al)\circ\ph)$ (cf. relations \eqref{nabla-ij}), 
we have
$$
{\nt T^i_\al}\, {\nt\na}_i {\nt\xi^\al}
={\nt g}_{\al\tau}(\frac{\pa\ph^\tau}{\pa x^k}\frac{\pa\ph^\be}{\pa x^i}{\Si}^{ik}) (({\nh\na}_\be\nh\xi^\al)\circ\ph)
=({\nh g}_{\al\tau}\circ\ph) ({\nh\Si}^{\tau\be}\circ\ph) (({\nh\na}_\be\nh\xi^\al)\circ\ph) 
=({\nh T^\be_\al}\ {\nh\na}_\be{\nh\xi^\al})\circ\ph.
$$

Finally, since 
${\nh T^\be_\al}={\nh g}_{\al\tau}{\nh \Si^{\be\tau}}$ and 
${\nh \Si^{\be\tau}}={\nh \Si^{\tau\be}}$ (cf. Remark \ref{Tij}), and since 
${\nh e}_{\be\tau}[\nh\xi]=\frac12({\nh g}_{\al\tau}{\nh\na}_\be{\nh\xi^\al}+{\nh g}_{\al\be}{\nh\na}_\tau{\nh\xi^\al})$ (cf. relations \eqref{eij's} and \eqref{e-hat}), we also have 
$$
{\nh T^\be_\al}\ {\nh\na}_\be{\nh\xi^\al}
=\frac12{\nh \Si^{\be\tau}}({\nh g}_{\al\tau}{\nh\na}_\be{\nh\xi^\al}+{\nh g}_{\al\be}{\nh\na}_\tau{\nh\xi^\al})
={\nh \Si^{\be\tau}}\ {\nh e}_{\be\tau}[\ph,\nh\xi].
$$
\end{proof}

%==================================================================

\section{Applied forces}
\label{sect5}

We assume in this paper that the external body and surface forces acting on the elastic body under consideration are conservative, in the sense that they are defined by means of a potential $P:\Ccal^1(M,N)\to\Rbb$ of the form 
\begin{equation}
\label{pot}
{P}[\ph]:=\int_M \Fbs[\ph]+\int_{{\Ga_2}} \Hbs[\ph] 
=\int_{\ph(M)} {\nh\Fbs}[\ph]+\int_{\ph({\Ga_2})} {\nh\Hbs}[\ph],
\end{equation}
where the volume forms 
$$
\Fbs[\ph]=\ph^*({\nh\Fbs}[\ph])\in \Ccal^0(\La^nM) 
\text{ \ and \ }
\Hbs[\ph]=(\ph|_{{\Ga_2}})^*({\nh\Hbs}[\ph]) \in \Ccal^0(\La^{n-1}{\Ga_2})
$$
are given for each admissible deformation $\ph\in \Ccal^1(M,N)$ of the elastic body. 
 
 Let $\ph\in\Ccal^1(M,N)$ be a deformation of the elastic body. 
The work of the applied body and surface forces corresponding to a displacement field ${\nt\xi}=(\ph_*{\xi})\circ\ph$, ${\xi}\in\Ccal^0(TM)$, of the configuration $\ph(M)$ of the body is denoted ${V}[\ph]{\xi}$ and is defined as the derivative of the functional $P:\Ccal^1(M,N)\to\Rbb$ at $\ph$ in the direction ${\nt\xi}$. Assuming that ${\nh\Fbs}$ and ${\nh\Hbs}$ are sufficiently regular, there exist sections ${\nh\fbs}[\ph] \in \Ccal^0(T^*\! N\otimes \La^{n}\! N|_{\ph(M)})$ and ${\nh\hbs}[\ph] \in \Ccal^0(T^*\! N\otimes \La^{n-1}\! N|_{\ph({\Ga_2})})$ such that 
\begin{equation}
\label{work}
\aligned
{V}[\ph]{\xi}
:= {P}'[\ph]{\nt\xi}
&= \int_{\ph(M)} {\nh\fbs}[\ph]\cdot {\nh\xi} +\int_{\ph({\Ga_2})} {\nh\hbs}[\ph]\cdot {\nh\xi}\\
&=\int_{M} {\fbs}[\ph]\cdot {\xi} +\int_{{\Ga_2}} {\hbs}[\ph]\cdot {\xi} 
   =\int_{M} {\nt\fbs}[\ph]\cdot {\nt\xi} + \int_{{\Ga_2}} {\nt\hbs}[\ph]\cdot {\nt\xi},
\endaligned
\end{equation}
for all ${\xi}\in\Ccal^0(TM)$, where ${\nh\xi}:=\ph_*{\xi}$, ${\nt\xi}:={\nh\xi}\circ\ph$, and 
\begin{equation}
\label{f's}
\aligned
& {\fbs}[\ph]=\ph^*({\nh\fbs}[\ph]), 
 && {\hbs}[\ph]=(\ph^*({\nh\hbs}[\ph]))|_{\Ga_2},  
\\ & {\nt\fbs}[\ph]={\bpb\ph}({\nh\fbs}[\ph]), 
 &&{\nt\hbs}[\ph]=({\bpb\ph}({\nh\hbs}[\ph])|_{\Ga_2}
. \endaligned
\end{equation}
The tensor fields ${\fbs}[\ph]$, ${\nt\fbs}[\ph]$ and ${\nh\fbs}[\ph]$, resp. ${\hbs}[\ph]$, ${\nt\hbs}[\ph]$ and ${\nh\hbs}[\ph]$, are called the \emph{densities of the applied body}, resp. \emph{surface},  \emph{forces}. 

The notation $\cdot$ appearing in \eqref{work} denotes as before the contraction of one index: if ${\fbs}[\ph]={f}[\ph]\otimes{\om}[\ph]$, ${\nt\fbs}[\ph]={\nt f}[\ph]\otimes{\om}[\ph]$, and ${\nh\fbs}[\ph]={\nh f}[\ph]\otimes{\nh\om}$, then 
$$
{\fbs}[\ph]\cdot {\xi}:=({f}[\ph]\cdot {\xi})\,{\om}[\ph],
\quad
{\nt\fbs}[\ph]\cdot {\nt\xi}:=({\nt f}[\ph]\cdot{\nt\xi})\,{\om}[\ph],
\text{ \ and } 
{\nh\fbs}[\ph]\cdot {\nh\xi}:=({\nh f}[\ph]\cdot{\nh\xi})\,{\nh\om}.
$$
The pullback operator $\ph^*:T^*\! N\otimes \La^kN \to T^*\! M\otimes\La^kM$ and the ``bundle pullback'' operator $\bpb\ph:T^*\! N \otimes \La^kN\to \ph^*T^*\! N\otimes \La^kM$ appearing in \eqref{f's} are defined explicitly in Remark \ref{forces}(b) below. 

We assume in this paper that the applied body and surface forces $\fbs[\ph]$ and  $\hbs[\ph]$ are local, so that their constitutive equations are of the form: 
$$
\aligned
({\fbs}[\ph])(x) & =\bs{\Flaw f}(x,\ph(x),D\ph(x)), \ x\in M, \\
({\hbs}[\ph])(x) & =\bs{\Flaw h}(x,\ph(x),D\ph(x)),\ x\in \Ga_2,
\endaligned
$$
for some (given) mappings 
$\bs{\Flaw f}(x,y,\cdot)={\z{\Flaw f}}(x,y,\cdot)\otimes{\z\om}(x):T^*_xM\otimes T_yN\to T^*\!M\otimes \La^nM$, $(x,y)\in M\times N$,  and 
$\bs{\Flaw h}(x,y,\cdot)={\z{\Flaw h}}(x,y,\cdot)\otimes(\bs{i}_{{\z\nu}}{\z\om})(x):T^*_xM\otimes T_yN\to T^*\!M\otimes \La^{n-1}M|_{\Ga_2}$, $(x,y)\in \Ga_2\times N$.

\begin{remark}
\label{forces}
Let ${g}[\ph]:=\ph^*{\nh g}$ and ${\z g}:=\z\ph^*{\nh g}$ be the metric tensor fields on $M$ induced by the deformations $\ph \in \Ccal^1(M,N)$ and ${\z\ph} \in \Ccal^1(M,N)$, let  ${\nu}[\ph]$ and ${\z\nu}$ denote the unit outer normal vector fields to the boundary of $M$ with respect to ${g}[\ph]$ and ${\z g}$, respectively, and let ${\nh \nu}[\ph]$ denote the unit outer normal vector fields to the boundary of $\ph(M)$ with respect to the metric ${\nh g}$. Let 
$$
{\om}[\ph]:=\ph^*{\nh\om}, \ 
{\z\om}:=\z\ph^*{\nh\om} \in \Ccal^0(\La^nM),  \text{ \ and \ } 
{\nh\om}\in\Ccal^\infty(\La^n N),
$$ 
be the volume forms induced by these metrics on $M$ and on $N$, respectively, and let 
$$
\bs{i}_{{\nu}[\ph]}{\om}[\ph], \ 
\bs{i}_{{\z\nu}}{\z\om}\in \Ccal^1(\La^{n-1}{\Ga_2}), \text{ \ and \ } 
\bs{i}_{{\nh\nu}[\ph]}{\nh\om}\in \Ccal^0(\La^{n-1}\ph({\Ga_2})),
$$
denote the corresponding volume forms on the hypersurfaces $\Ga_2\subset M$ and on $\ph({\Ga_2})\subset N$.

(a)
The one-form fields
${f}[\ph], {\z f}[\ph] \in \Ccal^0(T^*\!M)$,
${\nt f}[\ph], {\z{\nt f}}[\ph] \in \Ccal^0(\ph^*T^*\! N)$, 
${\nh f}[\ph]  \in  \Ccal^0(T^*\!N|_{\ph(M)})$ 
and 
${h}[\ph], {\z h}[\ph]  \in \Ccal^0(T^*\!M|_{\Ga_2})$,
${\nt h}[\ph], {\z{\nt h}}[\ph]  \in \Ccal^0(\ph^*T^*\!N|_{\Ga_2})$, 
${\nh h}[\ph]  \in  \Ccal^0(T^*\!N|_{\ph(\Ga_2)})$,
defined in terms of the densities of the applied forces appearing in \eqref{work} and \eqref{f's} by 
$$
\aligned
& \fbs[\ph] = {f}[\ph] \otimes {\om}[\ph]={\z f}[\ph] \otimes {\z\om}, 
&& \quad \hbs[\ph] = {h}[\ph] \otimes \bs{i}_{{\nu}[\ph]}{\om}[\ph]={\z h}[\ph] \otimes \bs{i}_{{\z\nu}}{\z\om},
\\
& {\nt\fbs}[\ph] = {\nt f}[\ph] \otimes {\om}[\ph]={\z{\nt f}}[\ph] \otimes {\z\om},
&& \quad {\nt\hbs}[\ph] = {\nt h}[\ph] \otimes \bs{i}_{{\nu}[\ph]}{\om}[\ph]={\z{\nt h}}[\ph] \otimes \bs{i}_{{\z\nu}}{\z\om},
\\
& {\nh\fbs}[\ph]={\nh f}[\ph] \otimes {\nh\om},
&& \quad {\nh\hbs}[\ph] ={\nh h}[\ph] \otimes \bs{i}_{{\nh\nu}[\ph]}{\nh\om},
\endaligned
$$
are related by 
\begin{equation*}
\label{om-omz-fh}
\aligned
& {f}[\ph]=\rho[\ph]\, {\z f}[\ph]=\ph^*({\nh f}[\ph]), 
&& \quad
{\nt f}[\ph]={\nh f}[\ph]\circ\ph, \\
&{h}[\ph]=\rho[\ph|_\Ga]\, {\z h}[\ph]=\ph^*({\nh h}[\ph])|_{{\Ga_2}}, 
&& \quad 
{\nt h}[\ph]={\nh h}[\ph]\circ\ph|_{\Ga_2},
\endaligned
\end{equation*}
where the (scalar) functions $\rho[\ph]:M\to\Rbb$ and $\rho[\ph|_\Ga]:\Ga\to\Rbb$ are defined by 
$$
\rho[\ph]{\om}[\ph]={\z\om}
\text{ \quad and \quad } 
\rho[\ph|_\Ga] \ (\ph|_{{\Ga_2}})^*(\bs{i}_{{\nh\nu}[\ph]}{\nh\om})=\bs{i}_{\z\nu}{{\z\om}}.
$$

(b)
The components in a local chart of the external body and surface forces, which are defined at each $x\in M$ by the relations
$$
\aligned
& {f}[\ph](x) = {f_i}(x) \ dx^i(x),
&& \quad {h}[\ph](x) = {h_i}(x) \ dx^i(x),
\\
& {\nt f}[\ph](x) = {\nt f_\al}(x) \  dy^\al(\ph(x)),
&& \quad  {\nt h}[\ph](x) = {\nt h_\al}(x) \  dy^\al(\ph(x)),
\\
& {\nh f}[\ph](\ph(x)) = {\nh f_\al}(\ph(x))\ dy^\al(\ph(x)),
&&  \quad {\nh h}[\ph](\ph(x)) = {\nh h_\al}(\ph(x)) \ dy^\al(\ph(x)), 
\endaligned
$$
are related to one another by
$$
{f}_i:= \frac{\pa\ph^\al}{\pa x^i} {\nt f_\al}, \quad  {\nt f}_\al= {\nh f_\al}\circ\ph,\text{ \ and \ } 
{h}_i:= \frac{\pa\ph^\al}{\pa x^i} {\nt h_\al}, \  {\nt h}_\al= {\nh h_\al}\circ\ph.
$$

(c)
If the volume forms ${\nh\Fbs}[\ph]={\nh\Fbs}$ and ${\nh\Hbs}[\ph]={\nh\Hbs}$ are independent of the deformation $\ph$, then the densities of the body and surface forces appearing in \eqref{work} are given explicitly by 
$$ 
{\nt\fbs}[\ph]\cdot{\nt\xi}=\ph^*({\nh \Lcal}_{{\nh\xi}}{\nh\Fbs}) \text{ and } {\nt\hbs}[\ph]\cdot{\nt\xi}=\ph^*({\nh \Lcal}_{{\nh\xi}}{\nh\Hbs}),
$$ 
for all ${\nt\xi}={\nh\xi}\circ\ph\in \Ccal^1(\ph^*TN)$. Indeed, in this case we have
$$
\aligned
V[\ph]{\xi}={P}'[\ph]{\nt\xi}
& =\Big[\frac{d}{dt}P(\ga_{{\nh\xi}}(\cdot,t)\circ\ph)\Big]_{t=0} 
=\int_M \ph^*({\nh \Lcal}_{{\nh\xi}}{\nh\Fbs}) + \int_{{\Ga_2}} \ph^*({\nh \Lcal}_{{\nh\xi}}{\nh\Hbs}).
\endaligned
$$
\end{remark}

%==================================================================

\section{Nonlinear elasticity}
\label{sect6}

In this section we combine the results of the previous sections to derive the model of nonlinear elasticity in a Riemannian manifold, first as a minimization problem, then as variational equations, and finally as a boundary value problem.

The principle of least energy that constitutes the keystone of nonlinear elasticity theory developed in this paper states that the deformation $\ph:M\to N$ of the body under conservative forces independent of time should minimize the total energy of the body over the set of all admissible deformations.

The total energy is defined as the difference between the strain energy $I[\ph]$ and the potential of the applied forces ${P}[\ph]$, viz.,
$$
J[\ph]:=I[\ph]-{P}[\ph]=\int_M\Wbs[\ph]-\Big(\int_M\Fbs[\ph]+\int_{{\Ga_2}}\Hbs[\ph]\Big),
$$
the dependence on $\ph$ of the densities $\Wbs[\ph]$, $\Fbs[\ph]$ and $\Hbs[\ph]$ being that specified by the constitutive laws of the material and applied forces (cf. Sections~\ref{sect4} and~\ref{sect5}). 

We recall that a \emph{deformation} of the body is an immersion $\ph\in\Ccal^1(M,N)$ that preserves orientation at all points of $M$ and satisfies the axiom of impenetrability of matter at all points of the interior of $M$; cf. Section \ref{sect3}. An \emph{admissible deformation} is a deformation that satisfies in addition a Dirichlet boundary condition, also called boundary condition of place, on a given measurable subset ${\Ga_1}\subset {\pa M}$ of the boundary of the body. Thus the set of all admissible deformations is defined by 
$$
\Phi:=\{\ph\in\Ccal^1(M,N); \ \ph|_{\textup{int}M} \text{ injective}, \ \det D\ph>0 \text{ in } M, \ \ph=\ph_1 \text{ on } {\Ga_1}\}, 
$$
where $\ph_1\in\Ccal^1({\Ga_1},N)$ is an immersion that specifies the position in $N$ of the points of the body that are kept fixed. 
In this paper we assume that $\ph_1={\z\ph}|_{{\Ga_1}}$, where ${\z\ph}\in \Ccal^2(M,N)$ is the reference deformation of the body.

Therefore, the principle of least energy asserts that the following proposition is true without proof:

\begin{e-proposition}
\label{PAL}
Let the total energy associated with a deformation $\psi\in \Ccal^1(M,N)$ of an elastic body be defined by 
\begin{equation}
\label{TE}
J[\psi]:=I[\psi]-{P}[\psi]=\int_M\Wbs[\psi]-\Big(\int_M\Fbs[\psi]+\int_{{\Ga_2}}\Hbs[\psi]\Big),
\end{equation}
and let the set of all admissible deformations of the body be defined by 
\begin{equation}
\label{Phi}
\Phi:=\{\psi\in\Ccal^1(M,N); \ \psi|_{\textup{int}M} \text{ injective}, \ \det D\psi>0 \text{ in } M, \ \psi=\ph_0 \text{ on } {\Ga_1}\}.
\end{equation}
Then the deformation $\ph$ of the body satisfies the following minimization problem:
\begin{equation}
\label{MP}
\ph\in \Phi \text{ \ and \  } J[\ph]\leq J[\psi] \text{ \ for all } \psi\in \Phi.
\end{equation}
\end{e-proposition}

The set $\Phi$ defined in this fashion does not coincide with the set of deformations with finite energy. Therefore, minimizers of $J$ are usually sought in a larger set, defined by weakening the regularity of the deformations. However, the existence of such minimizers is still not guaranteed, since the functional $J[\psi]$ is not convex with respect to $\psi$ for realistic constitutive laws; cf. \cite{ball,ball2} and the references therein for the particular case where $(N,{\nh g})$ is an Euclidean space. One way to alleviate this difficulty is to adapt the strategy of J. Ball \cite{ball}, who assumed that ${\Flaw{\Wbs}}$ is polyconvex,  to a Riemannian manifold $(N,{\nh g})$. Another way is to study the existence of critical points instead of minimizers of $J$. It is the latter approach that we follow in this paper. 

To this end, we first derive the variational equations of nonlinear elasticity, or the principle of virtual work, in a Riemannian manifold from the principle of least energy stated in Proposition \eqref{PAL}. 
 
The principle of virtual work states that the deformation of a body should satisfy the Euler-Lagrange equations associated to the functional $J$ appearing in the principle of least energy. We will derive below several equivalent forms of the principle of virtual work, one for each stress tensor field ${\Sibs}[\ph]$,  ${\Tbs}[\ph]$, ${\nt\Tbs}[\ph]$, ${\nh\Tbs}[\ph]$, or ${\nh\Sibs}[\ph]$, defined in Section \ref{sect4}.

The set of admissible deformations $\Phi$ being that defined by \eqref{Phi}, the \emph{space of admissible displacement fields} associated with a deformation $\ph\in\Phi$ of the body is defined by 
\begin{equation}
\label{Xi-tilde}
\aligned
{\nt\Xi}[\ph]
& =\{{\nt\xi}\in \Ccal^1(\ph^*TN); \ {\nt\xi}=0 \text{ on } {\Ga_1}\}\\
& =\{{\nt\xi}=(\ph_*{\xi})\circ\ph;  \  {\xi}\in {\Xi}\}\\
& =\{{\nt\xi}={\nh\xi}\circ\ph; \ {\nh\xi}\in {\nh\Xi}[\ph]\},
\endaligned
\end{equation}
where 
\begin{equation}
\label{Xi}
\aligned
{\Xi} & :=\{{\xi}\in \Ccal^1(TM); \ {\xi}=0 \text{ on } {\Ga_1}\}, \\
{\nh\Xi}[\ph]& :=\{{\nh\xi}\in \Ccal^1(TN|_{\ph(M)});  \ {\nh\xi}=0 \text{ on } \ph({\Ga_1})\}.
\endaligned
\end{equation}
Note that ${\nt\Xi}[\ph]$ and ${\nh\Xi}[\ph]$ depend on the deformation $\ph$, whereas ${\Xi}$ does not. 
The space ${\nh\Xi}[\ph]$ is called the \emph{space of admissible displacement fields on the configuration} $\ph(M)$ of the body. 

In what follows we assume that the stored energy function $\bs{\Elaw W}={\z{\Elaw W}} {\z\om}$ of the elastic material constituting the  body is of class $\Ccal^1$,  i.e., that ${\z{\Elaw W}}\in \Ccal^1(S_2M)$. In this case, a solution $\ph\in\Ccal^1(M,N)$ to the minimization problem \eqref{MP} is a critical point of the total energy $J=I-P$, defined by $\eqref{TE}$, that is, it satisfies the variational equations
$$
J'[\ph] {\nt\xi} = 0 \text{ \ for all } {\nt\xi}\in {\nt\Xi}[\ph]. 
$$
This equation is called the \emph{principle of virtual work}. The next proposition states this principle in five equivalent forms, one for each stress tensor field appearing in Definition \ref{defT}. 
Note that the first two equations are defined over the abstract configuration $M$ and are expressed in terms of the stress tensors fields ${\Tbs}[\ph]$ and ${\Sibs}[\ph]$,  also defined on $M$. 
The third equation is defined over $M$, but is expressed in terms of the stress tensor field ${\nt\Tbs}[\ph]$, which is defined over both $M$ and the deformed configuration $\ph(M)$. 
The last two equations are defined over the deformed configuration $\ph(M)$ and are expressed in terms of the stress tensor fields ${\nh\Tbs}[\ph]$ and ${\nh\Sibs}[\ph]$, also defined over $\ph(M)$.

\begin{e-proposition}
\label{VE}
Assume that the stored energy function $\bs{\Elaw W}$ of the elastic material constituting the  body is of class $\Ccal^1$. 
Let the sets ${\Xi}$, ${\nt\Xi}[\ph]$ and ${\nh\Xi}[\ph]$ of admissible displacement fields associated with a deformation $\ph\in \Phi$ of an elastic body be defined by \eqref{Xi-tilde} and \eqref{Xi}. 

If a deformation $\ph\in\Ccal^1(M,N)$ satisfies the principle of least energy (Proposition \ref{PAL}), then it also satisfies each one of the following five equivalent variational equations: 
$$
\aligned
\int_M {\Sibs}[\ph]: {e}[\ph,{\xi}] 
&= \int_M {\fbs}[\ph]\cdot {\xi} + \int_{{\Ga_2}}{\hbs}[\ph]\cdot{\xi}
&&  \text{for all \ }{\xi}\in{\Xi},
\\
\int_M {\Tbs}[\ph]: {\na}{\xi} 
& = \int_M {\fbs}[\ph]\cdot {\xi} + \int_{{\Ga_2}}{\hbs}[\ph]\cdot{\xi}
&&  \text{for all \ }{\xi}\in{\Xi},
\\
\int_M {\nt\Tbs}[\ph]: {\nt\na}{\nt\xi} 
& = \int_M {\nt\fbs}[\ph]\cdot {\nt\xi} +\int_{{\Ga_2}}{\nt\hbs}[\ph]\cdot{\nt\xi}
&& \text{for all \ } {\nt\xi}\in{\nt\Xi}[\ph],
\\
\int_{\ph(M)} {\nh\Tbs}[\ph]: {\nh\na}{\nh\xi} 
& = \int_{\ph(M)} {\nh\fbs}[\ph]\cdot {\nh\xi}  +\int_{\ph({\Ga_2})}{\nh\hbs}[\ph]\cdot{\nh\xi}
&&  \text{for all \ } {\nh\xi}\in {\nh\Xi}[\ph],
\\
\int_{\ph(M)} {\nh\Sibs}[\ph]: {\nh e}[{\nh\xi}]
& = \int_{\ph(M)} {\nh\fbs}[\ph]\cdot {\nh\xi}  +\int_{\ph({\Ga_2})}{\nh\hbs}[\ph]\cdot{\nh\xi}
&&  \text{for all \ } {\nh\xi}\in {\nh\Xi}[\ph]. 
\endaligned
$$
In these equations, $\cdot$ and $:$ denote the contraction of one index and of two indices, respectively; in particular, 
$$
\aligned
{\Tbs}[\ph]: {\na}{\xi} 
& :=({T}[\ph]: {\na}{\xi})\otimes\om[\ph], 
&& {\fbs}[\ph]\cdot {\xi}:=({f}[\ph]\cdot {\xi})\,{\om}[\ph],
\\
{\nh\Tbs}[\ph]: {\nh\na}{\nh\xi}
& :=({\nh T}[\ph]: {\nh\na}{\nh\xi})\otimes{\nh\om},
&& {\nh\fbs}[\ph]\cdot {\nh\xi} :=({\nh f}[\ph]\cdot{\nh\xi})\,{\nh\om},
\endaligned
$$
whenever ${\Tbs}[\ph]={T}[\ph]\otimes{\om}[\ph]$, ${\fbs}[\ph]={f}[\ph]\otimes{\om}[\ph]$, ${\nh\Tbs}[\ph]={\nh T}[\ph]\otimes{\nh\om}$, and ${\nh\fbs}[\ph]={\nh f}[\ph]\otimes{\nh\om}$.
\end{e-proposition}

\begin{proof}
The right-hand sides appearing in the above variational equations are equal when the vector fields ${\xi}$, ${\nt\xi}$ and ${\nh\xi}$ are related by 
$$
{\nt\xi}=(\ph_*{\xi})\circ\ph 
\text{ \ and \ }
{\nh\xi}=\ph_*{\xi}, 
$$
since they all define the same scalar, ${P}'[\ph]{\nt\xi}\in\Rbb$, representing the work of the applied forces; cf. Section \ref{sect5}. Likewise, the left-hand sides appearing in the same equations are equal for the same vector fields, since 
$$
{\Sibs}[\ph]: e[\ph,{\xi}] 
={\Tbs}[\ph]: {\na}{\xi}
={\nt\Tbs}[\ph]: {\nt\na}{\nt\xi}
=\ph^*({\nh\Tbs}[\ph]: {\nh\na}{\nh\xi})
=\ph^*({\nh\Sibs}[\ph]: {\nh e}[\ph,\xi])\ ;
$$
cf. Proposition \ref{propT}. Therefore, it suffices to prove the first equation. 

Let $\ph\in\Ccal^1(M,N)$ be a solution to the minimization problem \eqref{MP}. Given any admissible displacement field ${\xi}\in{\Xi}$, let ${\nt\xi}$ and ${\nh\xi}$ be defined as above, and let ${\nh\xi}\in\Ccal^1(TN)$ also denote any extension to $N$ of the vector field ${\nh\xi}=\ph_*{\xi}\in\Ccal^1(TN|_{\ph(M)})$.  Let $\ga_{{\nh\xi}}$ denote the flow of ${\nh\xi}$ (see Section \ref{sect2}) and define the one-parameter family of deformations 
$$
\psi(\cdot,t):=\ga_{{\nh\xi}}(\cdot,t)\circ\ph, \quad t\in (-\ep,\ep).
$$

It is clear that there exists $\ep>0$ such that $\psi(\cdot,t)\in \Phi$ for all $t\in (-\ep,\ep)$. Hence 
$$
J[\ph]\leq J[\psi(\cdot,t)] \text{ \ for all } t\in(-\ep,\ep),
$$
which implies in particular that 
$$ 
\Big[\frac{d}{dt}J[\psi(\cdot,t)]\Big]_{t=0}= 0,
$$
or equivalently, that 
$$
\aligned
\Big[\frac{d}{dt}I[\psi(\cdot,t])\Big]_{t=0}
=\Big[\frac{d}{dt}{P}[\psi(\cdot,t)]\Big]_{t=0}
={P}'[\ph]{\nt\xi}.
\endaligned
$$
It remains to compute the first term of this relation.

Using the Lebesgue dominated convergence theorem, the chain rule, and the relations $\Wbs[\ph]=\bs{\Elaw W}(\cdot , E[\z\ph,\ph])$ and $\frac{\pa \bs{\Elaw W}}{\pa E}(x,E)={\Elaw{\Sibs}}(x,E)$ (cf. Section \ref{sect4}, relations \eqref{w} and \eqref{cl2}), we deduce that, on the one hand, 
$$
\aligned
\Big[\frac{d}{dt}I[\psi(\cdot,t])\Big]_{t=0}
&=
\int_M \Big[\frac{d}{dt}\bs{\Elaw W}(\cdot , E[\z\ph,\psi(\cdot,t)])\Big]_{t=0} \\
& =
\int_M {\Elaw{\Sibs}}(\cdot, E[\z\ph,\ph]):\Big[\frac{d}{dt}E[\z\ph,\psi(\cdot,t)]\Big]_{t=0}.
\endaligned
$$
On the other hand, we established in the proof of Theorem \ref{e} that 
$$
e[\ph,{\xi}] = \Big[\frac{d}{dt}E[\ph,\psi(\cdot,t)]\Big]_{t=0},
$$
which implies in turn that 
$$
e[\ph,{\xi}] = \Big[\frac{d}{dt}E[{\z\ph},\psi(\cdot,t)]\Big]_{t=0}.
$$
Besides, $\Sibs[\ph]={\Elaw{\Sibs}}(\cdot , E[\z\ph,\ph])$; cf. Definition \ref{defT}. Therefore, the above relations imply that
$$
\Big[\frac{d}{dt}I[\psi(\cdot,t])\Big]_{t=0}=\int_M \Sibs[\ph]: e[\ph,{\xi}], 
$$
and the first variational equations of Proposition \ref{VE} follow. 
\end{proof}

We are now in a position to formulate the \emph{equations of nonlinear elasticity in a Riemannian manifold}. These equations are defined as the boundary value problem satisfied by a a sufficiently regular solution $\ph$ of the variational equations that constitute the principle of virtual work (Proposition \ref{VE}). We derive below several equivalent forms of this boundary value problem, one for each stress tensor field, as does the principle of virtual work. 

The divergence operators induced by the connections ${\na}={\na}[\ph]$, ${\nt\na}={\nt\na}[\ph]$, and ${\nh\na}$, are denoted ${\divop}={\divop}[\ph]$, ${\wt\divop}={\wt\divop}[\ph]$, and ${\wh\divop}$, respectively.  We emphasize that the differential operators ${\na}$, ${\nt\na}$,  ${\divop}$, and ${\wt\divop}$, depend on the unknown deformation $\ph$, while the differential operators ${\nh\na}$ and ${\wh\divop}$ do not; see Section \ref{sect2}.

\begin{e-proposition}
\label{BVP}
A deformation $\ph\in\Ccal^2(M,N)$ satisfies the principle of virtual work (Proposition \ref{VE}) if and only if it satisfies one of the  following six equivalent boundary value problems: 
$$
\aligned
&
\left\{
\aligned
 - \divop\, {\Tbs}[\ph] &={\fbs}[\ph] && \text{in }  \textup{int}M,\\
{\Tbs}[\ph]_{\nu}  &={\hbs}[\ph] && \text{on } {\Ga_2},\\
        \ph &=\z\ph && \text{on } {\Ga_1},
\endaligned
\right.
&&
\Leftrightarrow \
\left\{
\aligned
 -\divop {T}[\ph] &={f}[\ph] && \text{in }  \textup{int}M,\\
{T}[\ph]\cdot ({\nu}[\ph]\cdot {g}[\ph])   &={h}[\ph] && \text{on } {\Ga_2},\\
        \ph &=\z\ph && \text{on } {\Ga_1},
\endaligned
\right.
\\[.3cm]
\Leftrightarrow \
&
\left\{
\aligned
 -{\wt\divop}\, {\nt\Tbs}[\ph] &={\nt\fbs}[\ph] && \text{in }  \textup{int}M,\\
{\nt\Tbs}[\ph]_{\nu}   &={\nt\hbs}[\ph] && \text{on } {\Ga_2},\\
        \ph &=\z\ph && \text{on } {\Ga_1},
\endaligned
\right.
&& 
\Leftrightarrow \
\left\{
\aligned
 -{\wt\divop} \,{\nt T}[\ph] &={\nt f}[\ph] && \text{in }  \textup{int}M,\\
{\nt T}[\ph]\cdot({\nu}[\ph]\cdot  {g}[\ph])  &={\nt h}[\ph] && \text{on } {\Ga_2},\\
        \ph &=\z\ph && \text{on } {\Ga_1},
\endaligned
\right.
\\[.3cm]
\Leftrightarrow \
&
\left\{
\aligned
 -{\wh\divop}\, {\nh\Tbs}[\ph]  &={\nh\fbs} && \text{in }   \textup{int}(\ph(M)),\\
{\nh\Tbs}[\ph]_{{\nh\nu}} &={\nh\hbs}[\ph] && \text{on } \ph({\Ga_2}),\\
\ph &=\z\ph && \text{on } {\Ga_1},
\endaligned
\right.
&&
\Leftrightarrow \
\left\{
\aligned
 -{\wh\divop}\, {\nh T}[\ph]  &={\nh f}[\ph] && \text{in }   \textup{int}(\ph(M)),\\
{\nh T}[\ph] \cdot ({\nh\nu}[\ph]\cdot{\nh g}) &={\nh h}[\ph] && \text{on } \ph({\Ga_2}),\\
\ph &=\z\ph && \text{on } {\Ga_1},
\endaligned
\right.
\endaligned
$$
where $\nu:={\nu}[\ph]$, resp. ${\nh\nu}:={\nh\nu}[\ph]$, denotes the unit outer normal vector field to the boundary of $M$, resp. of $\ph(M)$, defined by the metric tensor field $g[\ph]:=\ph^*{\nh g}$, resp. ${\nh g}$.
\end{e-proposition}

\begin{proof} 
A deformation $\ph\in\Ccal^2(M,N)$ satisfies the principle of virtual work if and only if the associated  stress tensor field ${\Tbs}[\ph]$ satisfies the variational equations
$$
\int_M \Tbs[\ph]:{\na}{\xi}
= \int_M {\fbs}[\ph] \cdot {\xi}
+ \int_{{\Ga_2}} {\hbs}[\ph]\cdot {\xi}, 
$$
for all vector fields ${\xi}\in {\Xi}$. The standard integration by parts formula on the Riemann manifold $(M, {g})$ (or Lemma \ref{IP} with  $N=M$ and $\ph=\id_M$) applied to the left-hand side integral yields the first boundary value problem. 

Likewise, since the principle of virtual work satisfied by the stress tensor field ${\nt\Tbs}[\ph]$, respectively ${\nh\Tbs}[\ph]$, is equivalent to the variational equations 
$$
\int_M {\nt\Tbs}[\ph]:{\nt\na}{\nt\xi}
= \int_M {\nt\fbs}[\ph] \cdot {\nt\xi}
+ \int_{{\Ga_2}} {\nt\hbs}[\ph] \cdot{\nt\xi}
\text{ \ for all }
{\nt\xi}\in {\nt\Xi}[\ph],
$$
respectively to the variational equations 
$$
\int_{\ph(M)} {\nh\Tbs}[\ph] : {\nh\na}{{\nh\xi}}
=\int_{\ph(M)} {\nh\fbs}[\ph] \cdot{{\nh\xi}}  \\
+ \int_{\ph({\Ga_2})} {\nh\hbs}[\ph] \cdot {{\nh\xi}}
\text{ \ for all }
{\nh\xi}\in {\nh\Xi}[\ph],
$$
Lemma \ref{IP}, respectively Lemma \ref{IP} with  $M=N$ and $\ph=\id_N$,  yields the second boundary value problem, respectively the third boundary value problem.
\end{proof}

We conclude this section by defining the \emph{elasticity tensor field} associated with an elastic material (relation \eqref{ET} below), followed by an example of constitutive law that can be used in nonlinear elasticity to explicitly define (by means of the relations \eqref{ssSED}-\eqref{ssCE-bis} below) the strain energy density appearing in Proposition \ref{PAL}, and the stress tensor fields appearing in Propositions \ref{VE}-\ref{BVP} above. The minimization problem, the variational equations, and the boundary value problem, furnished by this example are known in the literature as the equations of ``small strain nonlinear elasticity". They constitute a useful approximation of the equations of (fully) nonlinear elasticity, as well as a generalization of the frequently used  Saint Venant - Kirchhoff elastic materials (see \eqref{svk-A}-\eqref{svk-I}). 

Consider an elastic body that occupies in a reference configuration a subset $\z\ph(M)\subset N$ of the physical space and whose stored energy function $\bs{\Elaw W}$ is of class $\Ccal^2$. There is no loss of generality in replacing the stored energy density $\bs{\Elaw W}(x,E)$ by $(\bs{\Elaw W}(x,E)-\bs{\Elaw W}(x,0))$; so we henceforth assume that $\bs{\Elaw W}(x,0)=0$ for all $x\in M$. If the reference configuration is unconstrained, then ${\Elaw{\Sibs}}(x,0)=\frac{\pa \bs{\Elaw W}}{\pa E}(x,0)=0$ too, so the Taylor expansion of $\bs{\Elaw W}(x,E)$ as a function of $E\in S_{2,x}M$ in a neighborhood of $0\in S_{2,x}M$ starts with the second derivative. This justifies the following definition of the elasticity tensor field, a notion which plays a fundamental role in both nonlinear elasticity and linearized elasticity; see  Sections \ref{sect7}-\ref{sect9}.

The \emph{elasticity tensor field} of an elastic material with stored energy function $\bs{\Elaw W}={\z{\Elaw W}}\,{\z\om}$, ${\z{\Elaw W}}\in \Ccal^2(S_2M)$, is the section $\Abs={\z A}\otimes{\z\om}$, ${\z A}\in \Ccal^1(S^2M\otimes_{\sym} S^2M)$, defined at each $x\in M$ by  
\begin{equation}
\label{ET}
\Abs(x):=\frac{\pa^2 \bs{\Elaw W}}{\pa E^2}(x,0)
\ \Leftrightarrow \ 
{\z A}(x):=\frac{\pa^2 {\zz{\Elaw{W}}}}{\pa E^2}(x,0).
\end{equation}

\noindent
Note that the components of ${\z A}$ in any local chart possess the symmetries
\begin{equation}
\label{ET-sym}
{\z A}^{ij k\ell}={\z A}^{k\ell ij}={\z A}^{ji k\ell}= {\z A}^{ij \ell k}, 
\end{equation}
and that 
\begin{equation}
\label{W-Taylor}
\bs{\Elaw W}(x,E)=\frac12 (\Abs(x):E):E+o(|E|^2), \ x\in M, \ E\in S_{2,x}M,
\end{equation}
where 
\begin{equation}
\label{AEE}
(\Abs(x):E):E:=[{\z A}(x)]^{ijk\ell}E_{k\ell}E_{ij}\,{\z\om}(x).
\end{equation}

The relation \eqref{W-Taylor} justifies the following definition of \emph{small strain nonlinear elasticity}. Small strain nonlinear elasticity is an approximation of nonlinear elasticity whereby the stored energy function ${\Elaw\Wbs}(x,\cdot):S_{2,x}M\to \La^nM$ of the elastic material is replaced by its quadratic approximation, which is denoted and defined by
\begin{equation}
\label{ssW}
\bs{\Elaw W}^{\sse}(x,E):=\frac12 (\Abs(x):E):E=\Big\{\frac12 ({\z A}(x):E):E\Big\}\,{\z\om}(x)
\end{equation}
for all $x\in M$ and $E\in S_{2,x}M$. The corresponding constitutive law ${\Elaw\Sibs}^{\sse}$ is then defined at each $x\in M$ and each $E\in S_{2,x}M$ by (see \eqref{cl2})
\begin{equation}
\label{ssSi}
{\Elaw\Sibs}^{\sse}(x,E) :=\Abs(x):E= ({\z A}(x):E)\otimes \,{\z\om}(x),  
\end{equation}
where $({\z A}(x):E)^{ij}:=[{\z A}(x)]^{ijk\ell}E_{k\ell}$. Note that  ${\Elaw\Sibs}^{\sse}$ is linear with respect to $E$. 

Therefore, the deformation of an elastic body satisfies in small strain nonlinear elasticity the minimization problem of Proposition \ref{PAL} with 
\begin{equation}
\label{ssSED}
({\Wbs}[\ph])(x):=\bs{\Elaw W}^{\sse}(x,E[{\z\ph},\ph]), \ x\in M,
\end{equation}
the variational equations of Proposition \ref{VE} with
\begin{equation}
\label{ssCE}
(\Sibs[\ph])(x) :={\Elaw\Sibs}^{\sse}(x,(E[\z\ph,\ph])(x))= \Abs(x):(E[\z\ph,\ph])(x), \ x\in M,
\end{equation}
and the boundary value problem of Propositions \ref{BVP} with 
\begin{equation}
\label{ssCE-bis}
\aligned
\Tbs[\ph]={g}[\ph]\cdot \Sibs[\ph]:={g}[\ph]\cdot(\Abs:E[\z\ph,\ph]),
\endaligned
\end{equation}
where $E[\z\ph,\ph]:=\frac12({g}[\ph] -{\z g})$, ${g}[\ph]:=\ph^*{\nh g}$, and ${\z g}:=\z\ph^*{\nh g}$. 
Note that the tensor field defined by \eqref{ssCE} is quadratic in $D\ph$, while the tensor field defined by \eqref{ssCE-bis} is quartic in $D\ph$.

Examples of elastic materials obeying classical small strain nonlinear elasticity are those characterized by a \emph{Saint Venant - Kirchhoff's constitutive law}, which characterizes the simplest elastic materials that obey the axiom of frame-indifference, are homogeneous and isotropic, and whose reference configuration is a natural state; cf. Theorem 3.8-1 in \cite{ciarlet}. Its interest in practical applications is due to the fact that it depends on only two scalar parameters, the Lam\'e constants $\lambda\geq 0$ and $\mu>0$ of the elastic material constituting the body (which are determined by experiment for each elastic material), by means of the relations
\begin{equation} 
\label{svk-A}
{\z A}^{ij k\ell}:=\lambda {\z g}^{ij}{\z g}^{k\ell}+\mu({\z g}^{ik}{\z g}^{j\ell}+{\z g}^{i\ell}{\z g}^{jk}), 
\end{equation}
defining the elasticity tensor field $\Abs={\z A}\otimes{\z\om}$. 

The stored energy function of a Saint Venant - Kirchhoff material is then defined by
\begin{equation} 
\label{svk-W}
\bs{\Elaw W}^{\svk}(x,E):=\Big(\frac{\lambda}{2} ({\rm tr\,}E)^2+\mu\, |E|^2\Big)\ {\z\om}(x) 
\end{equation}
for all $x\in M$ and all $E\in S_{2,x}M$, where ${\rm tr\,}E:={\z g}^{ij}E_{ij}$, $|E|^2:={\z g}^{ik}{\z g}^{j\ell} E_{k\ell} E_{ij}$, and ${\z g}:=\z\ph^*{\nh g}$. 

Hence the strain energy of a body made of a St Venant - Kirchhoff material is given by 
\begin{equation} 
\label{svk-I}
I^{\svk}[\ph]:=\int_M \bs{\Elaw W}^{\svk}(x,(E[\z\ph, \ph])(x))=\int_M \Big(\frac{\lambda}{2} ({\rm tr\,}E[\z\ph, \ph])^2+\mu\, |E[\z\ph, \ph]|^2\Big)\ {\z\om}. 
\end{equation}

%==================================================================

\section{Linearized elasticity} 
\label{sect7}

The objective of this section is to define the equations of linearized elastostatics in a Riemannian manifold.  These equations, which take the form of a minimization problem, of variational equations, or of a boundary value problem (see Proposition \ref{EE-lin} below), are deduced from those of nonlinear elasticity (Section \ref{sect6}) by linearizing the stress tensor field $\Sibs[\ph]$ with respect to the displacement field ${\xi}:=\exp_{\z\ph}^{-1}\ph$, and by retaining only the affine part with respect to ${\xi}$ of the densities ${\fbs}[\ph]$ and ${\hbs}[\ph]$ of the applied forces. Thus the vector field ${\xi}\in\Ccal^1(TM)$ becomes the new unknown in linearized elasticity, instead of the deformation $\ph$ in nonlinear elasticity. 

Consider a body made of an elastic material whose stored energy function is $\bs{\Elaw W}={\z{\Elaw W}}\,{\z\om}$, ${\z{\Elaw W}}\in\Ccal^2(S_2M)$, and occupying a reference configuration ${\z\ph}(M)\subset N$, ${\z\ph}\in\Ccal^2(M,N)$. Assume that this reference configuration is a natural state, so that ${\Elaw\Sibs}(x,0):=\frac{\pa {\Elaw\Wbs}}{\pa E}(x,0)=0$ for all $x\in M$. Without loss in generality, assume in addition that $\bs{\Elaw W}(x,0)=0$ for all $x\in M$. 

Let ${\z\om}$, $\bs{i}_{{\z\nu}}{\z\om}$, and ${\z\nu}$, respectively denote the volume form on $M$,  the volume form on $\Ga=\pa M$, and the unit outer normal vector field to the boundary of $M$, all induced by the metric ${\z g}=g[\z\ph]:={\z\ph}^*{\nh g}$; see Section \ref{sect2}.
Let $\Abs={\z A}\otimes{\z\om}$, ${\z A}\in \Ccal^1(S^2M\otimes_{\sym} S^2M)$, denote the elasticity tensor field of an elastic material constituting the elastic body under consideration; see \eqref{ET} in Section \ref{sect6}. 
 
In linearized elasticity, the \emph{stored energy function} and the \emph{constitutive law} of the elastic material are defined, at each $x\in M$ and each $E\in S_{2,x}M$, by 
\begin{equation}
\label{linWSi}
\aligned
\bs{\Elaw W}^{\lin}(x,E)& :=\frac12 (\Abs(x):E):E=\Big\{\frac12 ({\z A}(x):E):E\Big\}\otimes {\z\om}(x),\\
{\Elaw\Sibs}^{\lin}(x,E) & :=\Abs(x):E=({\z A}(x):E)\otimes {\z\om}(x),
\endaligned
\end{equation}
respectively, where $({\z A}(x) : H):K=[{\z A}(x)]^{ijk\ell}H_{k\ell}K_{ij}$. 
Hence the \emph{constitutive equation} of a linearly elastic material is given by either one of the following relations:
\begin{equation}
\label{Tbs-lin}
\aligned
{\bs{W}^{\lin}}[{\xi}]
& :=\bs{\Elaw W}^{\lin}(x,e[\z\ph,{\xi}])= \frac12 (\Abs:e[\z\ph,{\xi}]):e[\z\ph,{\xi}], \\
\Sibs^{\lin}[{\xi}]
& :={\Elaw\Sibs}^{\lin}(x,e[\z\ph,{\xi}])= \Abs:e[\z\ph,{\xi}], \\
\Tbs^{\lin}[{\xi}]
& :={\z g} \cdot \Sibs^{\lin}[{\xi}] = {\z g} \cdot (\Abs:e[\z\ph,{\xi}]), 
\endaligned
\end{equation}
for all displacement fields ${\xi}\in \Ccal^1(TM)$. 
Note that in linearized elasticity, the stress tensor fields $\Sibs[\ph]:=\Sibs^{\lin}[{\xi}]$ and $\Tbs[\ph]:=\Tbs^{\lin}[{\xi}]$, where $\ph:=\exp_{\z\ph}{\xi}$, are linear with respect to the displacement field ${\xi}$. 

The applied body forces ${\fbs}[\ph]$ and ${\hbs}[\ph]$, which are given in nonlinear elasticity by the constitutive equations (see Section~\ref{sect5})
$$
({\fbs}[\ph])(x)=\bs{\Flaw f}(x,\ph(x),D\ph(x))
\text{ \ and \ } 
({\hbs}[\ph])(x)=\bs{\Flaw h}(x,\ph(x),D\ph(x)),
$$
are replaced in linearized elasticity by their affine part with respect to the displacement field $\xi:=\exp_{\z\ph}^{-1}\ph$, that is, by ${\fbs}^{\aff}[{\xi}]$ and ${\hbs}^{\aff}[{\xi}]$, respectively, where 
\begin{equation}
\label{fbs-lin}
\aligned
& {\fbs}^{\aff}[{\xi}] :={\fbs}[\z\ph]+{\fbs}'[\z\ph]{\xi}
\text{ \ and \ } 
{\hbs}^{\aff}[{\xi}]:={\hbs}[\z\ph]+{\hbs}'[\z\ph]{\xi},\\
& {\fbs}'[\z\ph]{\xi} 
:=\lim_{t\to 0} \frac1{t}
\left(
{\fbs}[\exp_{\z\ph}(t{\xi})]-{\fbs}[{\z\ph}]
\right)
=\bs{f_1}\cdot {\xi}+ \bs{f_2} : {\zz\na}{\xi},
\\
&{\hbs}'[\z\ph]{\xi} 
:=\lim_{t\to 0} \frac1{t}
\left(
{\hbs}[\exp_{\z\ph}(t{\xi})]-{\hbs}[{\z\ph}]
\right)
=\bs{h_1}\cdot {\xi}+\bs{h_2} : {\zz\na}{\xi}, 
\endaligned
\end{equation}
for some sections 
$\bs{f_1}\in \Ccal^0(T^0_2M\otimes\La^nM)$, 
$\bs{f_2}\in \Ccal^0(T^1_2M\otimes\La^nM)$, 
$\bs{h_1}\in \Ccal^0(T^0_2M\otimes\La^{n-1}M|_{\Ga_2})$, and 
$\bs{h_2}\in \Ccal^0(T^1_2M\otimes\La^{n-1}M|_{\Ga_2})$.

Likewise, the densities ${\Fbs}[\ph]$ and ${\Hbs}[\ph]$ appearing in the definition of the potential of the applied forces (see \eqref{pot}) are replaced in linearized elasticity by their quadratic part with respect to the displacement field $\xi:=\exp_{\z\ph}^{-1}\ph$, that is, by ${\Fbs}^{\qua}[{\xi}]$ and ${\Hbs}^{\qua}[{\xi}]$, respectively, where 
\begin{equation}
\label{Fbs-qua}
\aligned
{\Fbs}^{\qua}[{\xi}] 
& :={\Fbs}[\z\ph]+{\Fbs}'[\z\ph]{\xi}
+\frac12{\Fbs}''[\z\ph][{\xi},{\xi}],
\\
{\Hbs}^{\qua}[{\xi}]
& :={\Hbs}[\z\ph]+{\Hbs}'[\z\ph]{\xi} 
+\frac12{\Hbs}''[\z\ph][{\xi},{\xi}],
\endaligned
\end{equation}
where ${\Fbs}'[\z\ph]{\xi}$ is defined as above and 
$$
{\Fbs}''[\z\ph][{\xi},{\xi}] 
:=\lim_{t\to 0}\frac2{t^2}\left(
{\Fbs}[\exp_{\z\ph}(t{\xi})]
-{\Fbs}[{\z\ph}]
-t{\bs{F}'}[{\z\ph}]{\xi}
\right)
$$
(a similar definition holds for ${\Hbs}'[\z\ph]{\xi}$ and ${\Hbs}''[\z\ph][{\xi},{\xi}]$). 

Finally, define the tensor fields ${\z{T}^{\lin}}[{\xi}]\in \Ccal^1(T^1_1M)$, ${\z{f^{\aff}}}[{\xi}]\in \Ccal^0(T^*\!M)$ and ${\z{h^{\aff}}}[{\xi}]\in \Ccal^0(T^*\!M|_{\Ga_2})$, by letting
\begin{equation}
\label{tf-lin} 
{\Tbs^{\lin}}[{\xi}] 
={\z{T}^{\lin}}[{\xi}]\otimes{\z\om},  
\quad 
{\fbs^{\aff}}[{\xi}]
={\z{f^{\aff}}}[{\xi}]\otimes{\z\om},  
\quad 
{\hbs^{\aff}}[{\xi}]
={\z{h^{\aff}}}[{\xi}]\otimes \bs{i}_{{\z\nu}}{\z\om}.
\end{equation}

We are now in a position to derive the boundary value problem, the variational equations, and the minimization problem, of linearized elasticity from the corresponding problems of nonlinear elasticity:

\begin{e-proposition}
\label{EE-lin}
Let
$$
{\Xi} :=\{{\eta}\in \Ccal^1(TM); \ {\eta}=0 \text{ on } {\Ga_1}\}
$$ 
denote the space of all admissible displacement fields (the mappings $\ph:=\exp_{{\z\ph}}{\eta}$, $\eta\in {\Xi}$, are then the admissible deformations of the body; see Section~\ref{sect6}). 

(a)
The displacement field ${\xi}\in \Ccal^2(TM)$ satisfies in linearized elasticity the following two equivalent boundary value problems: 
\begin{equation}
\label{bvp-lin}
\left\{
\aligned
 - {\z\divop}\, {\Tbs^{\lin}}[{\xi}]&={\fbs}^{\aff}[{\xi}] && \text{in }  \textup{int}M,\\
{\Tbs^{\lin}}[{\xi}]_{\z\nu}  &={\hbs}^{\aff}[{\xi}] && \text{on } {\Ga_2},\\
        {\xi} &=0 && \text{on } {\Ga_1}.
\endaligned
\right.
\quad  \Leftrightarrow \quad 
\left\{
\aligned
 - {\z\divop}\, {\z{T}^{\lin}}[{\xi}]&={\z{f^{\aff}}}[{\xi}] && \text{in }  \textup{int}M,\\
{\z{T}^{\lin}}[{\xi}]\cdot ({\z\nu}\cdot {\z g})  &={\z{h^{\aff}}}[{\xi}]  && \text{on } {\Ga_2},\\
        {\xi} &=0 && \text{on } {\Ga_1}.
\endaligned
\right.
\end{equation}

(b)
The displacement field ${\xi}\in \Ccal^1(TM)$ satisfies in linearized elasticity the following variational equations: 
\begin{equation}
\label{VE-lin}
{\xi}\in {\Xi}
\text{ \ and \ } 
\int_M (\Abs:e[\z\ph,{\xi}]):e[\z\ph,{\eta}] 
=\int_M({\fbs}^{\aff}[{\xi}] )\cdot{\eta}
+\int_{{\Ga_2}}({\hbs}^{\aff}[{\xi}] )\cdot{\eta}
\text{ \ for all } {\eta}\in{\Xi}. 
\end{equation}

(c)
If the external forces ${\fbs}[\ph]$ and ${\hbs}[\ph]$ are conservative (cf. Section \ref{sect5}), then the displacement field ${\xi}\in \Ccal^1(TM)$ satisfies in linearized elasticity the following minimization problem: 
\begin{equation}
\label{MP-lin}
{\xi}\in {\Xi}, 
\text { and \ } 
J^{\qua}[{\xi}]\leq J^{\qua}[{\eta}] 
\text{ \ for all } {\eta}\in{\Xi}, 
\end{equation}
where 
\begin{equation}
\label{J-lin}
J^{\qua}[{\eta}] 
:= \frac12 \int_M (\Abs:e[\z\ph,{\eta}]):e[\z\ph,{\eta}] 
- \Big(
\int_M {\Fbs}^{\qua}[{\eta}]
+ \int_{\Ga_1} {\Hbs}^{\qua}[{\eta}]
\Big)
\end{equation}
denotes the total energy of the body in linearized elasticity.
\end{e-proposition}

\begin{proof}
(a) 
The boundary value problem of linearized elasticity is the affine (with respect to ${\xi}$) approximation of the following boundary value problem of nonlinear elasticity (see Proposition \ref{BVP})
\begin{equation}
\label{a-1}
\aligned
 -\divop\, {\Tbs}[\ph] &={\fbs}[\ph] && \text{in }  \textup{int}M,\\
{\Tbs}[\ph]_{\nu}  &={\hbs}[\ph] && \text{on } {\Ga_2},\\
        \ph &=\z\ph && \text{on } {\Ga_1},
\endaligned
\end{equation}
satisfied by the deformation $\ph:=\exp_{\z\ph}{\xi}$. It remains to compute this affine approximation explicitly. 

The dependence of the stress tensor field $\Tbs[\ph]$ on the vector field ${\xi}=\exp_{\z\ph}^{-1}\ph$ has been specified in Section~\ref{sect4} by means of the constitutive law of the elastic material, namely, 
$$
\aligned
({\Tbs}[\ph])(x)
& =({g}[\ph])(x)\cdot({\Sibs}[\ph])(x)\\
&= (\ph^*{\nh g})(x)\cdot {\Elaw{\Sibs}}(x, (E[\z\ph,\ph])(x)), \ x\in M. 
\endaligned
$$

Since the reference configuration $\z\ph(M)$ has been assumed to be a natural state, we have ${\Elaw{\Sibs}}(x, 0)=0$ for all $x\in M$. The definition of the elasticity tensor field $\Abs={\z A}\otimes{\z\om}$ next implies that
$$
\frac{\pa{\Elaw{\Sibs}}}{\pa E}(x,0)H=\Abs(x):H, \quad  x\in M, \ H\in S_{2,x}M. 
$$
Besides (see Section \ref{sect4}),
$$
\left[\frac{d}{dt}E[\z\ph,\exp_{\z\ph}(t{\xi})]\right]_{t=0}=e[\z\ph,{\xi}]
\text{ \ and \ }
\left[\frac{d}{dt}g[\exp_{\z\ph}(t{\xi)}]\right]_{t=0}={\z g}. 
$$

Combining the last three relations yields
$$
{\Tbs}[\ph]={g}[\ph]\cdot {\Sibs}[\ph]={\z g}\cdot (\Abs:e[\z\ph,{\xi}]) + o(\|{\xi}\|_{\Ccal^{1}(TM)}), 
$$
which implies in turn that
\begin{equation}
\label{a-2}
{\divop}\, {\Tbs}[\ph]={\z\divop}\, {\Tbs^{\lin}}[{\xi}]+ o(\|{\xi}\|_{\Ccal^{1}(TM)}),
\end{equation}
since ${\Tbs^{\lin}}[{\xi}]:={\z g}\cdot (\Abs:e[\z\ph,{\xi}])$ is linear with respect to ${\xi}$. As above, 
${\z\divop}$ denotes the divergence operator induced by the connection ${\zz\na}:={\na}[{\z\ph}]$.

The dependence of the applied force densities $\fbs[\ph]$ and  $\hbs[\ph]$ on the vector field ${\xi}=\exp_{\z\ph}^{-1}\ph$ has been specified in Section~\ref{sect5} by means of the relations 
$$
({\fbs}[\ph])(x)=\bs{\Flaw f}(x,\ph(x),D\ph(x))
\text{ \ and \ } 
({\hbs}[\ph])(x)=\bs{\Flaw h}(x,\ph(x),D\ph(x)). 
$$
Thus, using the notation \eqref{fbs-lin} above, we have 
\begin{equation}
\label{a-3}
{\fbs}[\ph]={\fbs}^{\aff}[{\xi}]+ o(\|{\xi}\|_{\Ccal^{1}(TM)})
\text{ \ and \ } 
{\hbs}[\ph]={\hbs}^{\aff}[{\xi}]+ o(\|{\xi}\|_{\Ccal^{1}(TM)}).
\end{equation}

The boundary value problems \eqref{bvp-lin} of linearized elasticity follow from the boundary value problem \eqref{a-1} of nonlinear elasticity by using the estimates \eqref{a-2} and \eqref{a-3}.

(b) 
The variational equations of linearized elasticity are the affine part with respect to ${\xi}$ of the variational equations of nonlinear elasticity (see Proposition \ref{VE})
$$
\Scal[\exp_{\z\ph}{\xi}]{\eta}=0 \text{ \ for all } {\eta}\in{\Xi},
$$
where 
$$
\Scal[\ph]{\eta}
:=\int_M {\Sibs}[\ph] : e[\ph,{\eta}] 
- \Big(\int_M{\fbs}[\ph]\cdot{\eta}
+\int_{{\Ga_2}}{\hbs}[\ph]\cdot{\eta}\Big)
$$
and ${\Sibs}[\ph]:={\Elaw{\Sibs}}(\cdot, E[\z\ph,\ph])$. 
Thus the variational equations of linearized elasticity satisfied by ${\xi}\in{\Xi}$ read:
$$
\Scal^{\aff}[{\xi}]{\eta}
:=\Scal[\z\ph]{\eta}
+\left[\frac{d}{dt}\Scal[\exp_{\z\ph}(t{\xi})]{\eta}\right]_{t=0}
=0 \text{ \ for all } {\eta}\in{\Xi}.
$$
It remains to compute $\Scal^{\aff}[{\xi}]{\eta}$ explicitly. Using that 
${\Sibs}[\exp_{{\z\ph}}{\xi}]={\bs{\Si}^{\lin}}[{\xi}]  + o(\|{\xi}\|_{\Ccal^{1}(TM)})$ 
(see part (a) of the proof), that ${\bs{\Si}^{\lin}}[{\xi}]$ is linear with respect to ${\xi}$, that 
$e[\exp_{{\z\ph}}{\xi},{\eta}]=e[{\z\ph},{\eta}]+ o(\|{\xi}\|_{\Ccal^{1}(TM)})$, and the relations \eqref{a-3}, in the above definition of $\Scal[\ph]{\eta}$,  we deduce that 
$$
\Scal^{\aff}[{\xi}]{\eta}
=\int_M (\Abs:e[\z\ph,{\xi}]):e[\z\ph,{\eta}] 
-\Big(
\int_M({\fbs}^{\aff}[{\xi}] )\cdot{\eta}
+\int_{{\Ga_2}}({\hbs}^{\aff}[{\xi}] )\cdot{\eta}
\Big).
$$
Using this expression of $\Scal^{\aff}[{\xi}]{\eta}$ in the equation $\Scal^{\aff}[{\xi}]{\eta}=0$ yields \eqref{VE-lin}.

(c) 
The minimization problem of linearized elasticity consists in minimizing the functional $J^{\qua}:{\Xi}\to\Rbb$ over the set ${\Xi}$, where $J^{\qua}$ is defined at each ${\xi}\in{\Xi}$ as the quadratic approximation with respect to the parameter $t$ of the total energy $J[\exp_{\z\ph}(t{\xi})]$, where 
$$
J[\ph]=\int_M\Wbs[\ph]-\Big(\int_M\Fbs[\ph]+\int_{{\Ga_2}}\Hbs[\ph]\Big);
$$
cf. Proposition \ref{PAL}. Thus
$$
J^{\qua}[{\xi}]:=J[\z\ph]+J'[\z\ph]\xi+\frac12 J''[\z\ph][\xi,\xi]
\text{ \ for all } {\xi}\in \Ccal^1(TM),
$$ 
where 
$$
\aligned
J'[\z\ph]\xi
& :=\lim_{t\to 0} \frac1t
\Big(
J[\exp_{\z\ph}(t{\xi}]-J[{\z\ph}]
\Big),
\\
J''[\z\ph][\xi,\xi]
& :=\lim_{t\to 0} \frac{2}{t^2}
\Big(
J[\exp_{\z\ph}(t{\xi}]-J[{\z\ph}]-tJ'[\z\ph]\xi
\Big).
\endaligned
$$

Using that $({\Wbs}[\z\ph])(x)={\Elaw\Wbs}(x, 0)=0$ for all $x\in M$, that the reference configuration is a natural state (i.e., ${\Elaw\Sibs}(x,0)=0$ for all $x\in M$), and the definition of the elasticity tensor field $\Abs$ (see \eqref{ET} in Section \ref{sect6}), we deduce that  
$$
J^{\qua}[{\xi}]
 :=\frac12\int_M (\Abs:e[\z\ph,{\xi}]):e[\z\ph,{\xi}] 
- \Big( {P}[\z\ph]{\xi}+{P}'[\z\ph]{\xi}+\frac12 {P}''[\z\ph][{\xi},{\xi}]\Big)
$$
for all ${\xi}\in\Ccal^1(TM)$, where ${P}[\ph]:=\int_M\Fbs[\ph]+\int_{{\Ga_2}}\Hbs[\ph]$ denotes the potential of the applied forces. Then the explicit expression \eqref{MP-lin} of the functional $J^{\qua}$ follows from the definition \eqref{Fbs-qua} of ${\Fbs}^{\qua}[\xi]$ and ${\Hbs}^{\qua}[\xi]$. 
\end{proof}

\begin{remark}
(a)
The variational equations of linearized elasticity of Proposition \ref{EE-lin} are extended by density to displacement fields ${\xi}\in H^1(TM)$ that vanish on $\Ga_1$ in order to prove that they possess solutions; cf. Theorem \ref{exist-lin}.

(b)
If the forces are conservative, then the three formulations of linearized elasticity are equivalent. 

(c)
Elastic materials modeled by Hooke's constitutive law correspond to linearized elasticity. Their elasticity tensor field and stored energy function are respectively defined by 
$$
\aligned
{\z A}^{ij k\ell}
& :=\lambda {\z g}^{ij}{\z g}^{k\ell}+\mu({\z g}^{ik}{\z g}^{j\ell}+{\z g}^{i\ell}{\z g}^{jk}),\\
\bs{\Elaw W}^{\hke}(x,E)
& :=\Big(\frac{\lambda}{2} ({\rm tr\,}E)^2+\mu\, |E|^2\Big)\ {\z\om}(x), 
\quad x\in M, \ E\in S_{2,x}M, 
\endaligned
$$
where $\lambda\geq 0$ and $\mu>0$ denote the Lam\'e constants of the elastic material constituting the body under consideration. The corresponding strain energy is defined at each ${\xi}\in\Ccal^1(TM)$ by 
$$
I^{\hke}[{\xi}]:=\int_M \bs{\Elaw W}^{\hke}(x,(e[\z\ph,{\xi}])(x))
= \int_M\Big(\frac{\lambda}{2} ({\rm tr\,}e[\z\ph,{\xi}])^2+\mu\, |e[\z\ph,{\xi}]|^2\Big)\ {\z\om},
$$
where 
$$
e[{\z\ph},{\xi}]
=\frac12\Lcal_{{\xi}}{\z g}
=\frac12({\zz\na}{\xi}^\flat+({\zz\na}{\xi}^\flat)^T)
$$
denotes the linearized strain tensor field; cf. Section \ref{sect3}. Note that $\bs{\Elaw W}^{\hke}=\bs{\Elaw W}^{\svk}$ (see \eqref{svk-W}), and that $I^{\hke}[{\xi}]$ is the quadratic part with respect to ${\xi}$ of $I^{\svk}[\exp_{\z\ph}{\xi}]$, where $I^{\svk}$ denotes the Saint Venant - Kirchhoff  stored energy function (see \eqref{svk-I}).
 \hfill $\square$
\end{remark}

%==================================================================

\section{Existence and regularity theorem in linearized elasticity}
\label{sect8}

In this and the next sections, $M$ denotes the closure of an open subset $\Omega$ of a smooth oriented differentiable manifold of dimension $n$, $\Omega$ being in addition bounded, connected, with a Lipschitz-continuous  boundary $\Ga:=\pa M$; see the beginning of Section \ref{sect2}. The reference deformation ${\z\ph}:M\to N$ being given such that ${\z\ph}(M)$ is a natural state of the elastic body under consideration, the Riemannian metric ${\z g}={g}[\z\ph]:=\z\ph^*{\nh g}$ makes $M$ a Riemannian manifold and ${\z\ph}:M\to N$ becomes an isometry. 

As in the previous sections, ${\zz\na}$, ${\z\divop}$, and ${\z\om}$ denote the connection, the divergence operator, and the volume form on $M$ induced by ${\z g}$. The solutions to the boundary value problem of linearized elasticity will be sought in Sobolev spaces whose elements are sections of the tangent bundle $TM$; these spaces have been defined in Sect. \ref{sect2}. 

The existence of solutions in linearized elasticity relies on the following Riemannian version of Korn's inequality, due to Chen \& Jost \cite{chen}: Assume that ${\Ga_1}\subset{\pa M}$ is a non-empty relatively open subset of the boundary of $M$. Then there exists a constant $C_K$ such that 
\begin{equation}
\label{korn}
\|{\xi}\|_{H^1(TM)}\leq C_K\|e[{\z\ph},{\xi}]\|_{L^2(S_2M)}, \ e[{\z\ph},{\xi}]:=\Lcal_{{\xi}}{{\z g}}
\end{equation}
for all ${\xi}\in H^1(TM)$ satisfying ${\xi}=0$ on ${\Ga_1}$. 

The smallest possible constant $C_K$ in the above inequality, called the Korn constant of $M$ and ${\Ga_1}\subset\pa M$,  plays an important role in both linearized elasticity and nonlinear elasticity (see assumptions \eqref{small-lin-fh} and \eqref{small-lin-f} of Theorems \ref{exist-lin} and \ref{exist}, respectively) since the smaller the Korn constant is, the larger the applied forces are in both existence theorems. To our knowledge, the dependence of the Korn constant on the metric ${\z g}$ of $M$ and on ${\Ga_1}$ is currently unknown, save a few particular cases; see, for instance, \cite{horgan,kondratev} and the references therein for estimates of the Korn constant when $(N,{\nh g})$ is an Euclidean space, or \cite{grubic} when $(N,{\nh g})$ is a Riemannian manifold.

One such particular case, relevant to our study, is when ${\Ga_1}={\pa M}$ and the metric ${\z g}$ is close to a flat metric, in the sense that its Ricci tensor field satisfies the inequality $\|{\z{\rm Ric}}\|_{L^\infty(S_2M)}\leq \frac1{C_P}$, where $C_P$ is the Poincar\'e constant of $M$, i.e., the smallest constant $C_P$ that satisfies
$$
\|{\xi}\|_{L^2(TM)}^2\leq C_P\|{\zz\na}{\xi}\|_{L^2(T^1_1M)}^2
\text{ \ for all } 
{\xi}\in H^1_0(TM).
$$
To see this, it suffices to combine the inequality
$$
\aligned
\|{\zz\na}{\xi}\|_{L^2(T^1_1M)}^2+
\|{\z\divop}\,{\xi}\|_{L^2(M)}^2
=
\frac12\|\Lcal_{{\xi}}{{\z g}}\|_{L^2(S_2M)}^2+
\int_M{\z{\rm Ric}}({\xi},{\xi})\,{\z\om}\\
\leq 
\frac12\|\Lcal_{{\xi}}{{\z g}}\|_{L^2(S_2M)}^2
+\|{\z{\rm Ric}}\|_{L^\infty(S_2M)}\|{\xi}\|_{L^2(TM)}^2,
\endaligned
$$
which holds for all ${\xi}\in H^1_0(TM)$, with the above assumption on the Ricci tensor field of ${\z g}$, to deduce that 
$$
\|{\zz\na}{\xi}\|_{L^2(T^1_1M)}^2
\leq C_K^* \|\Lcal_{{\xi}}{{\z g}}\|_{L^2(S_2M)}^2, 
$$
where $C_K^*:=\Big\{2(1- C_P \|{\z{\rm Ric}}\|_{L^\infty(S_2M)})\Big\}^{-1}$. Hence the constant $C_K=2\Big\{(1+C_P)C_K^*)\Big\}^{1/2}$ can be used in Theorems \ref{exist-lin} and \ref{exist} when  ${\Ga_1}={\pa M}$ and $\|{\z{\rm Ric}}\|_{L^\infty(S_2M)}\leq \frac1{C_P}$. Interestingly enough, particularizing these theorems to a flat metric ${\z g}$ yields existence theorems in classical elasticity with $C_K^*=1/2$, which constant is optimal.

The next theorem establishes the existence and regularity of the solution to the equations of linearized elasticity under specific assumptions on the data. Recall that in linearized elasticity the applied body and surface forces are of the form (see relations \eqref{fbs-lin})
$$
\aligned
{\fbs^{\aff}}[{\xi}]
& ={\fbs}[{\z\ph}]+{\fbs'}[{\z\ph}]{\xi}
={\fbs}[{\z\ph}]+({{\bs{f_1}}} \cdot {\xi}+ {{\bs{f_2}}} : {\zz\na}{\xi}),
\\
{\hbs^{\aff}}[{\xi}]
& ={\hbs}[{\z\ph}]+{\hbs'}[{\z\ph}]{\xi}
 =  {\hbs}[{\z\ph}] + ({{\bs{h_1}}} \cdot {\xi}+ {{\bs{h_2}}} : {\zz\na}{\xi}),
\endaligned
$$
and that (see \eqref{AEE})
$$
({\z A}(x):H):H:=[{\z A}(x)]^{ijk\ell}H_{k\ell}H_{ij}, \ x\in M.
$$

\begin{theorem}
\label{exist-lin}
Assume that ${\Ga_1}\subset{\pa M}$ is a non-empty relatively open subset of the boundary of $M$, that the elasticity tensor field $\Abs={\z A}\otimes{\z\om}$, ${\z A}\in L^\infty(T^4_0M)$, is uniformly positive-definite, that is,  there exists a constant $C_{\!{\z A}}>0$ such that
\begin{equation}
\label{A-ell}
({\z A}(x):H(x)):H(x) \geq C_{\!{\z A}}\,  |H(x)|^2, \text{ \ where } |H(x)|^2:={\z g}(x)(H(x),H(x)), 
\end{equation}
for almost all $x\in M$ and all $H(x)\in S_{2,x}M$, and that the applied body and surface forces satisfy the smallness assumption 
\begin{equation}
\label{small-lin-fh}
\|{\fbs'}[{\z\ph}]]\|_{\Lcal(H^1(TM),L^2(T^*\!M\otimes \La^nM))} +
\|{\hbs'}[{\z\ph}]\|_{\Lcal(H^1(TM),L^2(T^*\!M\otimes \La^{n-1}M)|_{\Ga_2})}
\leq {C_{\!{\z A}}}/{C_K},
\end{equation}
where $C_K$ denotes the constant appearing in Korn's inequality \eqref{korn}.

(a) If ${\fbs}[\z\ph]\in L^2(T^*\!M\otimes \La^nM)$ and ${\hbs}[\z\ph]\in L^2(T^*\!M\otimes \La^{n-1}M|_{\Ga_2})$, there exists a unique vector field ${\xi}\in H^1(TM)$, ${\xi}=0$ on $\Ga_1$, such that 
\begin{equation}
\label{var}
\int_M (\Abs:e[\z\ph,{\xi}]):e[\z\ph,{\eta}] 
=\int_M{\fbs}^{\aff}[{\xi}]\cdot{\eta}
+\int_{{\Ga_2}}{\hbs}^{\aff}[{\xi}]\cdot{\eta}
\end{equation}
for all ${\eta}\in H^1(TM)$, ${\eta}=0$ on $\Ga_1$.

(b) 
Assume in addition that $\Ga_1=\pa M$ and, for some integer $m\geq 0$ and  $1<p<\infty$, the boundary of $M$ is of class $\Ccal^{m+2}$, ${\z\ph}\in \Ccal^{m+2}(M,N)$, ${\Abs}\in \Ccal^{m+1}(T^4_0M\otimes \La^nM)$, ${{\bs{f_1}}}\in \Ccal^m(T^0_2M\otimes \La^nM)$, ${{\bs{f_2}}}\in \Ccal^m(T^1_2M\otimes \La^nM)$, and ${\fbs}[{\z\ph}]\in W^{m,p}(T^*\!M\otimes \La^nM)$. Then ${\xi}\in W^{m+2,p}(TM)$ and satisfies the boundary value problem 
\begin{equation}
\label{sedp}
\aligned
-{\z\divop}\, ({\Tbs^{\lin}}[{\xi}]) & = {\fbs^{\aff}}[{\xi}] && \text{in}\ M,\\
{\xi} & =0 &&\text{on}\ \pa M.
\endaligned
\end{equation}
Furthermore, the mapping ${\bs{\Acal}}^{\lin}: W^{m+2,p}(TM) \to W^{m,p}(T^*\!M\otimes\La^nM)$ defined by  
\begin{equation}
\label{A-lin}
{\bs{\Acal}}^{\lin}[{\eta}]:={\z\divop}\, {\Tbs}^{\lin}[{\eta}]+{\fbs}'[\z\ph]{\eta} \text{\quad  for all } {\eta}\in W^{m+2,p}(TM), 
\end{equation}
is linear, bijective, continuous, and its inverse $({\bs{\Acal}}^{\lin})^{-1}$ is also linear and continuous. 
\end{theorem}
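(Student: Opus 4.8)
The plan is to handle part (a) by the Lax--Milgram lemma and part (b) by linear elliptic regularity combined with a Fredholm argument. For part (a), set $V:=\{{\eta}\in H^1(TM);\ {\eta}=0\text{ on }{\Ga_1}\}$, a closed subspace of $H^1(TM)$, and introduce the bilinear form $B(\xi,\eta):=\int_M(\Abs:e[\z\ph,\xi]):e[\z\ph,\eta]-\int_M({\fbs}'[\z\ph]\xi)\cdot\eta-\int_{\Ga_2}({\hbs}'[\z\ph]\xi)\cdot\eta$ together with the linear form $L(\eta):=\int_M{\fbs}[\z\ph]\cdot\eta+\int_{\Ga_2}{\hbs}[\z\ph]\cdot\eta$, so that \eqref{var} reads $B(\xi,\eta)=L(\eta)$ for all $\eta\in V$. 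Continuity of $B$ follows from ${\z A}\in L^\infty$, from the mapping properties of ${\fbs}'[\z\ph]$ and ${\hbs}'[\z\ph]$, and from the trace theorem, while continuity of $L$ uses ${\fbs}[\z\ph]\in L^2$, ${\hbs}[\z\ph]\in L^2(T^*\!M\otimes\La^{n-1}M|_{\Ga_2})$ and again the trace theorem. The crux is coercivity: the leading term is bounded below using the uniform positive-definiteness \eqref{A-ell} by $C_{\!{\z A}}\|e[\z\ph,\xi]\|_{L^2(S_2M)}^2$, and Korn's inequality \eqref{korn} turns this into a bound of order $\|\xi\|_{H^1(TM)}^2$; the two perturbation terms are then absorbed precisely because of the smallness assumption \eqref{small-lin-fh}. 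Lax--Milgram yields a unique ${\xi}\in V$.

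For part (b) note first that $\Ga_1=\pa M$ forces $\Ga_2=\emptyset$, so only body forces remain. The heart of the matter is to prove that ${\bs{\Acal}}^{\lin}$ of \eqref{A-lin}, regarded as a map $X\to{\bs{Y}}$ with $X:=W^{m+2,p}(TM)\cap W^{1,p}_0(TM)$ and ${\bs{Y}}:=W^{m,p}(T^*\!M\otimes\La^nM)$, is an isomorphism. I would begin by verifying the ellipticity of the second-order principal part $\xi\mapsto{\z\divop}({\z g}\cdot(\Abs:e[\z\ph,\xi]))$: evaluating its principal symbol in a direction $\zeta\neq0$ on a vector $v\neq0$ and setting $H:=\sym(\zeta\otimes v)$ produces the quantity $({\z A}:H):H$, which is strictly positive by \eqref{A-ell}; hence the Legendre--Hadamard (strong ellipticity) condition holds, and the Dirichlet condition $\xi=0$ on $\pa M$ satisfies the complementing (Shapiro--Lopatinskii) condition, as is classical for strongly elliptic systems.

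With ellipticity in hand I would invoke the Agmon--Douglis--Nirenberg $L^p$ theory for elliptic systems on the Riemannian manifold $(M,{\z g})$, valid under the stated regularity of $\pa M$, ${\z\ph}$ and ${\z A}$, to conclude that the principal operator is Fredholm from $X$ to ${\bs{Y}}$; the first-order term ${\fbs}'[\z\ph]$ is a compact perturbation, since $X\hookrightarrow W^{m+1,p}(TM)$ compactly by Rellich--Kondrachov while ${\fbs}'[\z\ph]$ maps $W^{m+1,p}$ into $W^{m,p}$ boundedly. The principal part being symmetric, the index is zero. For injectivity, suppose $\xi\in X$ with ${\bs{\Acal}}^{\lin}[\xi]=0$; bootstrapping the (here vanishing) right-hand side through the elliptic estimates raises the integrability exponent until $\xi\in H^1_0(TM)$, and then integrating by parts via Lemma \ref{IP}, together with the symmetry relations of Proposition \ref{propT}, gives $\int_M(\Abs:e[\z\ph,\xi]):e[\z\ph,\xi]=\int_M({\fbs}'[\z\ph]\xi)\cdot\xi$, which forces $\xi=0$ by the coercivity estimate of part (a). A Fredholm operator of index zero that is injective is bijective, and its inverse is continuous by the open mapping theorem.

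It remains to identify the $H^1$ solution of part (a) with an element of $X$. Since ${\fbs}[\z\ph]\in L^2$, the case $m=0$, $p=2$ of the same regularity theory (licensed by $\pa M\in\Ccal^2$ and ${\z A}\in\Ccal^1$) first upgrades that solution to $H^2\cap H^1_0(TM)$; on the other hand the surjectivity just established produces $\xi_*\in X$ with ${\bs{\Acal}}^{\lin}[\xi_*]=-{\fbs}[\z\ph]$, i.e.\ $\xi_*$ solves \eqref{sedp}, and $\xi_*$ lies in $H^1_0(TM)$ and satisfies the variational equations, so by the uniqueness of part (a) it coincides with the (a)-solution. Hence that solution belongs to $W^{m+2,p}(TM)$ and solves \eqref{sedp}. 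The main obstacle throughout is this elliptic-regularity step: one must carefully verify ellipticity and the complementing boundary condition for the elasticity system, transport the $L^p$ Agmon--Douglis--Nirenberg estimates to the bundle/Riemannian setting, and manage the bootstrap that reconciles the $H^1$ solution of (a) with the $W^{m+2,p}$ regularity class of (b) for all exponents $1<p<\infty$.
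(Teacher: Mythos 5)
Your part (a) is the paper's own argument, merely written out in more detail: Lax--Milgram on $V=\{\eta\in H^1(TM);\ \eta=0 \text{ on } \Ga_1\}$, coercivity from the positive-definiteness \eqref{A-ell} combined with Korn's inequality \eqref{korn}, and absorption of the $\xi$-dependent force terms thanks to the smallness assumption \eqref{small-lin-fh}. For part (b), however, you take a genuinely different route. The paper never invokes Fredholm theory: it observes that the part-(a) solution is a weak solution of \eqref{sedp}, upgrades it to $W^{m+2,p}$ by the Agmon--Douglis--Nirenberg regularity theory, and then reads bijectivity of ${\bs{\Acal}}^{\lin}$ directly off part (a) --- injectivity because a kernel element of ${\bs{\Acal}}^{\lin}$ in $W^{m+2,p}$ satisfies \eqref{var} with zero data and is therefore zero by the uniqueness of (a), surjectivity because for given $\bs{f_0}\in W^{m,p}$ part (a) furnishes an $H^1_0$ solution which the same regularity theorem places in $W^{m+2,p}$ --- and it concludes, as you do, with the open mapping theorem. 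You instead establish the isomorphism property intrinsically in the $L^p$ scale: Legendre--Hadamard ellipticity from \eqref{A-ell} (your computation with $H=\sym(\zeta\otimes v)$ is correct, since positivity over all of $S_{2,x}M$ implies positivity on symmetrized rank-one tensors), the complementing condition for Dirichlet data, the Fredholm property, compactness of the first-order perturbation via Rellich--Kondrachov, index zero, and injectivity by bootstrap plus the part-(a) energy estimate. Each approach buys something. Yours avoids a tacit assumption in the paper: the paper's surjectivity step feeds $\bs{f_0}\in W^{m,p}$ into part (a), which presupposes $W^{m,p}(T^*\!M\otimes\La^nM)\hookrightarrow L^2$ and is thus not automatic when $p<2$ and $m$ is small, whereas your Fredholm argument needs no such embedding. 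Conversely, your index-zero step is cheaper to state than to prove when $p\neq 2$: ``the principal part is symmetric'' identifies kernel and cokernel only after a duality-plus-regularity argument (or an argument that the index is constant along the $L^p$ scale), so it deserves roughly the same amount of justification as the paper's whole surjectivity proof. Note finally that your reconciliation step and injectivity step, exactly like the paper's injectivity argument, quietly use $W^{m+2,p}(TM)\cap W^{1,p}_0(TM)\subset H^1_0(TM)$, which for $m=0$ requires $p\geq 2n/(n+2)$; this borderline case is glossed over on both sides.
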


\begin{proof}
(a) Korn's inequality, the uniform positive-definiteness of the elasticity tensor field ${\Abs}$, and the smallness of the linear part of the applied forces (see \eqref{korn}, \eqref{A-ell}, and \eqref{small-lin-fh}), together imply by means of Lax-Milgram theorem that the variational equations of linearized elasticity \eqref{var} possess a unique solution ${\xi}$ in the space $\{{\xi}\in H^1(TM);\ {\xi}=0 \text{ on } \Ga_1\}$. 

(b) It is clear that the solution of \eqref{var} is a weak solution to the boundary value problem \eqref{sedp}. 
Since the latter is locally (in any local chart) an elliptic system of linear partial differential equations,  the regularity assumptions on $\Abs$ and $\fbs^{\aff}$ and the theory of elliptic systems of partial differential equations imply that this solution is locally of class $W^{m+2,p}$; see, for instance, \cite{ADN,geymonat,necas} and the proof of Theorem 6.3-6 in \cite{ciarlet}. Furthermore, the regularity of the boundary of $M$ together with the assumption that $\Ga_1=\pa M$ imply that ${\xi}\in W^{m+2,p}(TM)$.

The mapping ${\bs{\Acal}}^{\lin}$ defined in the theorem is clearly linear and continuous. It is injective, since ${\bs{\Acal}}^{\lin}[{\xi}]=0$ with ${\xi}\in W^{m+2,p}(TM)$ implies that ${\xi}$ satisfies the variational equations \eqref{var}, hence ${\xi}=0$ by the uniqueness part of (a). It is also surjective since, given any ${\bs{f_0}}\in W^{m,p}(T^*\!M\otimes\La^nM)$, there exists (by part (a) of the theorem) a vector field ${\xi}\in H^1_0(TM)$ such that 
$$
\int_M (\Abs:e[\z\ph,{\xi}]):e[\z\ph,{\eta}] =\int_M{\bs{f_0}}\cdot{\eta} \text{ \  \ for all } {\eta}\in H^1_0(TM),
$$ 
and ${\xi}\in W^{m+2,p}(TM)$ by the regularity result established above. That the inverse of ${\bs{\Acal}}^{\lin}$ is also linear and continuous follows from the open mapping theorem. 
\end{proof}

\begin{remark}
\label{regularity}
The regularity assumption ${\Abs}\in \Ccal^{m+1}(T^4_0M\otimes \La^nM)$ can be replaced in Theorem \ref{exist-lin}(b) by the weaker regularity ${\Abs}\in W^{m+1,p}(T^4_0M\otimes \La^nM)$, $(m+1)p>n:=\dim M$, by using improved regularity theorems for elliptic systems of partial differential equations; cf. \cite{simpson}. 
\end{remark}

%==================================================================

\section{Existence theorem in nonlinear elasticity}
\label{sect9}

We show in this section that the boundary value problem of nonlinear elasticity (see Proposition \ref{BVP}) possesses at least a solution in an appropriate Sobolev space if $\Ga_2=\emptyset$ and the applied body forces are sufficiently small in a sense specified below. The assumption that $\Ga_2=\emptyset$ means that the boundary value problem is of pure Dirichlet type, that is, the boundary condition $\ph=\z\ph$ is imposed on the whole boundary $\Ga_1=\Ga$ of the manifold $M$.  Thus our objective is to prove the existence of a deformation $\ph:M\to N$ that satisfies the system (see Proposition \eqref{BVP}):
\begin{equation}
\label{bvp}
\aligned
 - \divop {\Tbs}[\ph] &={\fbs}[\ph] && \text{in }  \textup{int}M,\\
        \ph &=\z\ph && \text{on } {\Ga},
\endaligned
\end{equation}
where 
\begin{equation}
\label{cl}
\aligned
({\Tbs}[\ph])(x)
&:={\Flaw{\Tbs}}(x,\ph(x), D\ph(x)) 
,\ x\in M,\\
({\fbs}[\ph])(x)
&:=\bs{\Flaw f}(x,\ph(x), D\ph(x))  
, \ x\in M,
\endaligned
\end{equation}
the functions ${\Flaw{\Tbs}}$ and $\bs{\Flaw f}$ being the constitutive laws of the elastic material and of the applied forces, respectively (see Sections \ref{sect4} and \ref{sect5}). Recall that the divergence operator $\divop=\divop[\ph]$ depends itself on the unknown $\ph$ (since it is induced by the metric ${g}={g}[\ph]:=\ph^*{\nh g}$; cf. Section \ref{sect2}) and that $\om[\ph]:=\ph^*{\nh\om}$ denotes the volume form induced by the metric ${g}[\ph]$. 
 
The idea is to seek a solution of the form $\ph:=\exp_{\z\ph}{\xi}$, where the reference deformation ${\z\ph}:M\to N$ corresponds to a natural state ${\z\ph}(M)$ of the body, and ${\xi}:M\to TM$ is a sufficiently regular vector field that belongs to the set
$$
\Ccal^0_{{\z\ph}}(TM)
:=\{{\xi}\in \Ccal^0(TM); \ \|{\z\ph}_*{\xi}\|_{\Ccal^0(TN|_{{\z\ph}(M)})}<{\nh\de}({\z\ph}(M))\},
$$
where ${\nh\de}({\z\ph}(M))$ denotes the injectivity radius of the compact subset ${\z\ph}(M)$ of $N$; see \eqref{C0-phi} in Section \ref{sect3}. It is then clear that the deformation $\ph:=\exp_{\z\ph}{\xi}$, where ${\xi}\in \Ccal^1(TM)\cap \Ccal^0_{{\z\ph}}(TM)$, satisfies the boundary value problem \eqref{bvp} if and only if the displacement field ${\xi}$ satisfies the boundary value problem  
\begin{equation}
\label{bvp-xi}
\aligned
 - \divop {\Tbs}[\exp_{{\z\ph}}{\xi}]
 &={\fbs}[\exp_{{\z\ph}}{\xi}] 
 && \text{in }  \textup{int}M,
 \\
{\xi} 
&=0 
&& \text{on } {\Ga}.
\endaligned
\end{equation}
 
\begin{remark}
The divergence operator appearing in \eqref{bvp-xi} depends itself on the unknown ${\xi}$, since it is defined in terms of the connection ${\na}=\na[\ph]$ induced by the metric ${g}={g}[\ph]:=\ph^*{\nh g}$, $\ph:=\exp_{\z\ph}{\xi}$. 
\end{remark}

Given any vector field ${\xi}\in\Ccal^1(TM)\cap \Ccal^0_{{\z\ph}}(TM)$, let 
\begin{equation}
\label{Acal}
{\bs{\Acal}}[{\xi}]
:=\divop ({\Tbs}[\exp_{{\z\ph}}{\xi}]) +{\fbs}[\exp_{{\z\ph}}{\xi}].
\end{equation}
Proving an existence theorem to the boundary value problem \eqref{bvp-xi} amounts to proving the existence of a solution to the equation ${\bs{\Acal}}[{\xi}]=0$ in an appropriate space of vector fields ${\xi}:M\to TM$ satisfying the boundary condition ${\xi}=0$ on $\Ga$. This could be done by using Newton's method, which finds a zero of ${\bs{\Acal}}$ as the limit of the sequence defined by
$$
{\xi}_1:=0 \text{ and } {\xi}_{k+1}
:={\xi}_k-{\bs{\Acal}}'[{\xi}_k]^{-1}{\bs{\Acal}}[{\xi}_k], \ k\geq 1.
$$
This sequence converges to a zero of ${\Acal}$ under the assumptions of Newton-Kantorovich theorem (see, for instance, \cite{NK}) on the mapping ${\bs{\Acal}}$, which turn out to be stronger than those of Theorem \ref{exist} below, which uses a variant of Newton's method, where a zero of ${\bs{\Acal}}$ is found as the limit of the sequence defined by
$$
{\xi}_1:=0 \text{ and } {\xi}_{k+1}
:={\xi}_k-{\bs{\Acal}}'[0]^{-1}{\bs{\Acal}}[{\xi}_k], \ k\geq 1. 
$$

The key to applying Newton's method is to find function spaces $X$ and $\bs{Y}$ such that the mapping ${\bs{\Acal}}:U\subset X\to \bs{Y}$ be differentiable in a neighborhood $U$ of ${\xi}=0\in X$. The definition \eqref{Acal} of ${\bs{\Acal}}$ can be recast in the equivalent form 
\begin{equation}
\label{Acal2}
{\bs{\Acal}}[{\xi}]
:=\divop (({\Tbs}\circ\exp_{{\z\ph}})[{\xi}]) 
+({\fbs}\circ\exp_{{\z\ph}})[{\xi}], 
\end{equation}
where the mappings $({\Tbs}\circ\exp_{{\z\ph}})$ and $({\fbs}\circ\exp_{{\z\ph}})$ are defined by the constitutive equations 
\begin{equation}
\label{cl+}
\aligned
(({\Tbs}\circ\exp_{{\z\ph}})[{\xi}])(x)
={\Dlaw{\Tbs}}(x,{\xi}(x), {\zz\na}{\xi}(x))
:={\Flaw{\Tbs}}(x,\ph(x), D\ph(x)),\ x\in M,\\
(({\fbs}\circ\exp_{{\z\ph}})[{\xi}])(x)
={\Dlaw{\fbs}}(x,{\xi}(x), {\zz\na}{\xi}(x))
:={\Flaw{\fbs}}(x,\ph(x), D\ph(x)),\ x\in M,
\endaligned
\end{equation} 
for all vector fields ${\xi}\in\Ccal^1(TM)\cap \Ccal^0_{{\z\ph}}(TM)$, where $\ph=\exp_{\z\ph}{\xi}$ and ${\Flaw{\Tbs}}$ and ${\Flaw{\fbs}}$ are the mappings appearing in \eqref{cl}.

Relations \eqref{cl+} show that $({\Tbs}\circ\exp_{{\z\ph}})$ and $({\fbs}\circ\exp_{{\z\ph}})$ are Nemytskii (or substitution) operators. It is well known that such operators are not differentiable between Lebesgue spaces unless they are linear, essentially because these spaces are not Banach algebras. Therefore $\xi$ must belong to a space $X$ with sufficient regularity, so that the nonlinearity of  ${\Dlaw{\Tbs}}$ and ${\Dlaw{\fbs}}$ with respect to $({\xi}(x),{\zz\na}{\xi}(x))$ be compatible with the desired differentiability of ${\bs{\Acal}}$. Since we also want $\xi$ to belong to an appropriate Sobolev space (so that we could use the theory of elliptic systems of partial differential equations), we set (see Section \ref{sect2} for the definition of the Sobolev spaces and norms used below)
\begin{equation}
\label{X}
X:=W^{m+2,p}(TM)\cap W^{1,p}_0(TM),
\end{equation}
for some $m\in\Nbb$ and $1<p<\infty$ satisfying $(m + 1)p>n$. Note that the space $X$ endowed with the norm $\|\cdot\|_X:=\|\cdot\|_{m+2,p}$ is a Banach space, and that the condition $(m + 1)p>n$ is needed to ensure that the Sobolev space $W^{m+1,p}(T^1_1M)$, to which ${\zz\na}{\xi}$ belongs, is a Banach algebra.  It also implies that $X\subset \Ccal^1(TM)$, so the deformation $\ph=\exp_{\z\ph}\xi$ induced by a vector field $\xi\in X\cap \Ccal^0_{{\z\ph}}(TM)$ is at least of class $\Ccal^1$; hence the results of Section \ref{sect7} about modeling nonlinear elasticity hold for ${\xi}\in X\cap \Ccal^0_{{\z\ph}}(TM)$. 

Define 
\begin{equation}
\label{U}
U={B_X(\de)}:=\{{\xi}\in X; \|\xi\|_X<\de\}\subset X
\end{equation}
as an open ball in $X$ centered at the origin over which the exponential map $\ph=\exp_{\z\ph}{\xi}$ is well-defined (which is the case if the radius $\de$ is sufficiently small). For instance, it suffices to set 
\begin{equation}
\label{delta}
\de=\de({\z\ph,m,p})
:=\frac{{\nh\de}({\z\ph}(M))}{{C_{\!S}(m+2,p)}\|D{\z\ph}\|_{\Ccal^0(T^*\!M\otimes{\z\ph^*}TN)}},  
\end{equation}
where ${C_{\!S}}(m+2,p)$ denotes the norm of the Sobolev embedding $W^{m + 2,p}(TM)\subset \Ccal^0(TM)$. To see this, note that  
$
\|{\z\ph}_*{\xi}\|_{\Ccal^0(TN|_{{\z\ph}(M)})}
=\sup_{x\in M} |D{\z\ph}(x)\cdot {\xi}(x)|
\leq \|D{\z\ph}\|_{\Ccal^0(T^*\!M\otimes{\z\ph^*}TN)} {C_{\!S}}(m+2,p)\|{\xi}\|_X
<{\nh\de}({\z\ph}(M))
$ 
for all ${\xi}\in {B_X(\de)}$. 

We assume that the reference configuration ${\z\ph}(M)\subset N$ of the elastic body under consideration is a natural state, and that the reference deformation, the constitutive laws of the elastic material constituting the body, and the applied body forces defined by \eqref{cl+}, satisfy the following regularity assumptions:
\begin{equation}
\label{C-reg}
\aligned
{\z\ph} 
& \in \Ccal^{m + 2}(M, N), 
\\
{\Dlaw{\Tbs}}  
& \in \Ccal^{m+1}(M\times TM\times T^1_1M, \, T^1_1M\otimes \La^nM),  
\\
({\Dlaw{\fbs}}-{\fbs}[{\z\ph}] ) 
& \in \Ccal^{m}(M\times TM\times T^1_1M, \, T^*\!M\otimes \La^nM),
\endaligned
\end{equation}
and 
\begin{equation}
\label{W-reg}
{\fbs}[{\z\ph}]    
\in W^{m,p}(T^*\!M\otimes \La^nM), 
\end{equation}
for some $m\in\Nbb$ and $p\in(1,\infty)$ satisfying $(m + 1)p>n$. 
Under these assumptions, standard arguments about composite mappings and the fact that $W^{m+1,p}(M)$ is an algebra together imply that the mappings
$$
\aligned
({\Tbs}\circ\exp_{{\z\ph}})
: {\xi}\in {B_X(\de)} 
& \to {\Tbs}[\exp_{{\z\ph}}{\xi}]
\in W^{m+1,p}(T^1_1M\otimes \La^nM),\\
({\fbs}\circ\exp_{{\z\ph}})
: {\xi}\in {B_X(\de)} 
& \to {\fbs}[\exp_{{\z\ph}}{\xi}]
\in W^{m,p}(T^*\!M\otimes \La^nM),
\endaligned
$$
are of class $\Ccal^1$ over the open subset ${B_X(\de)}$ of the Banach space $X$. Since 
$
{\bs{\Acal}}[{\xi}]
= \divop {\Tbs}[\exp_{{\z\ph}}{\xi}] 
+ {\fbs}[\exp_{{\z\ph}}{\xi}]
$ 
for all ${\xi}\in {B_X(\de)}$, this in turn implies that ${\bs{\Acal}}\in \Ccal^1({B_X(\de)},{\bs{Y}})$, where the space 
\begin{equation}
\label{Y}
{\bs{Y}}:=W^{m,p}(T^*\!M\otimes \La^nM))
\end{equation}
is endowed with its usual norm $\|\cdot\|_{\bs{Y}}:=\|\cdot\|_{m,p}$. 

Finally, we assume that the elasticity tensor field ${\Abs}={\z A}\times{\z\om}$, where ${\z\om}:={\z\ph^*}{\nh\om}$, of the elastic material constituting the body under consideration is uniformly positive-definite, that is, there exists a constant $C_{\!{\z A}}>0$ such that
\begin{equation}
\label{A-ell-bis}
({\z A}(x):H(x)):H(x) 
\geq C_{\!{\z A}}\,  |H(x)|^2, 
\text{ \ where } 
|H(x)|^2:={\z g}(x)(H(x),H(x)), 
\end{equation}
for almost all $x\in M$ and all $H(x)\in S_{2,x}M$ (the same condition as in linearized elasticity; cf. \eqref{A-ell}). Note that the elasticity tensor field 
$\displaystyle 
{\Abs}:=\frac{\pa{\Elaw{\Sibs}}}{\pa E}
$  
is defined in terms of the constitutive law ${\Flaw{\Tbs}}$ appearing in \eqref{cl} by means of the relation 
$$
{\Flaw{\Tbs}}(x,\ph(x), D\ph(x))
={g}(x)\cdot {\Elaw{\Sibs}}(x,(E[{\z\ph},\ph])(x)), 
\ x\in M, 
$$
where 
$
E[{\z\ph},\ph]
:=({g}[\ph]-{g}[{\z\ph}])/2
=(\ph^*{\nh g}-{\z\ph^*}{\nh g})/2
$; 
cf. Section \ref{sect7}. 

We are now in a position to establish the existence of a solution to the Dirichlet boundary value problem of nonlinear elasticity if the density ${\fbs}[{\z\ph}] $, resp. the first variation ${{\fbs}'}[{\z\ph}]$, of the applied body forces acting on, resp. in a neighborhood of, the reference configuration ${\z\ph}(M)$ are both small enough in appropriate norms.

\begin{theorem}
\label{exist}
Suppose that the reference deformation ${\z\ph}$ and the constitutive laws ${\Dlaw{\Tbs}}$ and ${\Dlaw{\fbs}}$ satisfy the regularity assumptions \eqref{C-reg} and \eqref{W-reg}, that the elasticity tensor field ${\Abs}={\z A}\otimes {\z\om}$ satisfy the inequality \eqref{A-ell-bis}, and that the manifold $M$ possesses a non-empty boundary of class $\Ccal^{m+2}$. 
Let ${\bs{\Acal}}:{B_X(\de)}\subset X\to {\bs{Y}}$ be the (possibly nonlinear) mapping defined by \eqref{Acal}-\eqref{delta} and \eqref{Y}. 

(a)  Assume that the first variation at ${\z\ph}$ of the density  
of the applied body forces satisfies the smallness assumption:
\begin{equation}
\label{small-lin-f}
\|{{\fbs}'}[{\z\ph}]\|_{\Lcal(H^1(TM),\, L^2(T^*\!M\otimes \La^nM))} 
\leq {C_{\!{\z A}}}/{C_K},
\end{equation}
where $C_K$ denotes the constant appearing in Korn's inequality \eqref{korn} with $\Ga_1=\Ga$.

Then the mapping ${\bs{\Acal}}$ is differentiable over the open ball ${B_X(\de)}$ of $X$, ${\bs{\Acal}}'[0] \in \Lcal(X,{\bs{Y}})$ is bijective, and ${\bs{\Acal}}'[0]^{-1}\in \Lcal({\bs{Y}},X)$. Moreover, ${\bs{\Acal}}'[0]={{\bs{\Acal}}^{\lin}}$ is precisely the differential operator of linearized elasticity defined by \eqref{A-lin}. 

(b) Assume in addition that the density of the applied body forces acting on the reference configuration ${\z\ph}(M)$ of the body satisfies the smallness assumption:
\begin{equation}
\label{small-f[0]}
\|{\fbs}[{\z\ph}] \|_{\bs{Y}}<\ep_1:=
\sup_{0< r <\de} r\,
\Big(\|{\bs{\Acal}}'[0]^{-1}\|_{\Lcal({\bs{Y}},X)}^{-1}
-\sup_{\|{\xi}\|_X <r} \|{\bs{\Acal}}'[{\xi}]-{\bs{\Acal}}'[0]\|_{\Lcal(X,{\bs{Y}})}\Big),
\end{equation}
where $\de$ is defined by \eqref{delta}. Let $\de_1$ be any number in $(0,\de)$ for which 
\begin{equation}
\label{delta1}
\|{\fbs}[{\z\ph}] \|_{\bs{Y}}<
\de_1\Big(\|{\bs{\Acal}}'[0]^{-1}\|_{\Lcal({\bs{Y}},X)}^{-1}
-\sup_{\|{\xi}\|_X <\de_1} \|{\bs{\Acal}}'[{\xi}]-{\bs{\Acal}}'[0]\|_{\Lcal(X,{\bs{Y}})}\Big).
\end{equation}
Then the equation ${\bs{\Acal}}[{\xi}]=0$ has a unique solution ${\xi}$ in the open ball $B_X(\de_1)\subset B_X(\de)$. Moreover, the mapping $\ph:=\exp_{\z\ph}{\xi}$ satisfies the boundary value problem \eqref{bvp}-\eqref{cl}.

(c) 
Assume further that the mapping ${\z\ph}:M\to N$ is injective and orientation-preserving. Then there exists $\ep_2\in (0,\ep_1)$ such that, if $\|{\fbs}[{\z\ph}] \|_{\bs{Y}}<\ep_2$, the deformation $\ph := \exp_{{\z\ph}}{\xi}$ found in (b) is injective and orientation-preserving.
\end{theorem}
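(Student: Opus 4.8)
The plan is to treat the three parts in turn, reducing the nonlinear problem to the invertibility of the linearized operator of Theorem \ref{exist-lin} and then solving $\bs{\Acal}[\xi]=0$ by a quasi-Newton contraction. For part (a), the differentiability of $\bs{\Acal}$ over $B_X(\de)$ is already granted by the composite-mapping and Banach-algebra arguments recalled just before the statement (this is where $(m+1)p>n$ enters), so the only genuinely new computation is the identification $\bs{\Acal}'[0]=\bs{\Acal}^{\lin}$. Differentiating $\bs{\Acal}[\xi]=\divop[\exp_{\z\ph}\xi]\,\Tbs[\exp_{\z\ph}\xi]+\fbs[\exp_{\z\ph}\xi]$ at $\xi=0$ in a direction $\eta\in X$ produces three contributions: the variation of the ($\xi$-dependent) divergence operator applied to $\Tbs[\z\ph]$, the fixed operator $\z\divop$ applied to the variation of $\Tbs$, and the variation of the force term. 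The first contribution carries $\Tbs[\z\ph]={\z g}\cdot{\Elaw{\Sibs}}(\cdot,0)$ as a factor, which vanishes because ${\z\ph}(M)$ is a natural state; the remaining two give $\z\divop\,\Tbs^{\lin}[\eta]+\fbs'[\z\ph]\eta$, using $\left[\frac{d}{dt}\Tbs[\exp_{\z\ph}(t\eta)]\right]_{t=0}=\Tbs^{\lin}[\eta]$ from Section~\ref{sect7}. This is precisely $\bs{\Acal}^{\lin}[\eta]$ of \eqref{A-lin}. Invertibility of $\bs{\Acal}'[0]=\bs{\Acal}^{\lin}$ as a map $X\to\bs{Y}$, with continuous inverse, then follows from Theorem \ref{exist-lin}(b), whose hypotheses are supplied by the standing regularity assumptions \eqref{C-reg}--\eqref{W-reg} together with \eqref{A-ell-bis} and (since $\Ga_2=\emptyset$) the smallness \eqref{small-lin-f}.

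For part (b), I would solve $\bs{\Acal}[\xi]=0$ as a fixed point of $\Phi(\xi):=\xi-\bs{\Acal}'[0]^{-1}\bs{\Acal}[\xi]$ on $\ov{B_X(\de_1)}$, the map underlying the iteration ${\xi}_{k+1}={\xi}_k-{\bs{\Acal}}'[0]^{-1}{\bs{\Acal}}[{\xi}_k]$. Since $\Phi'(\xi)=\bs{\Acal}'[0]^{-1}(\bs{\Acal}'[0]-\bs{\Acal}'[\xi])$, the choice of $\de_1$ in \eqref{delta1} forces $k:=\|\bs{\Acal}'[0]^{-1}\|\,\sup_{\|\xi\|_X<\de_1}\|\bs{\Acal}'[\xi]-\bs{\Acal}'[0]\|<1$, so $\Phi$ is a $k$-contraction on the (convex) ball by the mean value inequality. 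For the self-mapping property I would use $\bs{\Acal}[0]=\divop\,\Tbs[\z\ph]+\fbs[\z\ph]=\fbs[\z\ph]$ (again by $\Tbs[\z\ph]=0$), whence $\Phi(0)=-\bs{\Acal}'[0]^{-1}\fbs[\z\ph]$ and $\|\Phi(\xi)\|_X\le k\de_1+\|\bs{\Acal}'[0]^{-1}\|\,\|\fbs[\z\ph]\|_{\bs{Y}}<\de_1$, the last inequality being exactly \eqref{delta1} rearranged. The Banach fixed point theorem then yields a unique zero $\xi\in B_X(\de_1)$, and the existence of an admissible $\de_1\in(0,\de)$ is guaranteed by \eqref{small-f[0]}, which states precisely that $\|\fbs[\z\ph]\|_{\bs{Y}}$ lies below the supremum defining $\ep_1$. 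Finally $\bs{\Acal}[\xi]=0$ reads $-\divop\,\Tbs[\ph]=\fbs[\ph]$ in $\textup{int}\,M$ with $\ph=\exp_{\z\ph}\xi$, while $\xi\in W^{1,p}_0(TM)$ gives $\ph=\z\ph$ on $\Ga$, so $\ph$ solves \eqref{bvp}--\eqref{cl}.

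For part (c), I would first extract from $\xi=\Phi(\xi)$ the a priori bound $\|\xi\|_X\le(1-k)^{-1}\|\bs{\Acal}'[0]^{-1}\|\,\|\fbs[\z\ph]\|_{\bs{Y}}$, and note that $\de_1$ (hence $k$) may be taken arbitrarily small as $\|\fbs[\z\ph]\|_{\bs{Y}}\to0$, so that $\|\xi\|_X\to0$. Because $(m+1)p>n$ yields the continuous embedding $X\subset\Ccal^1(TM)$, this forces $\ph=\exp_{\z\ph}\xi\to\z\ph$ in $\Ccal^1(M,N)$. Orientation-preservation is then immediate: $\det D\z\ph\ge c>0$ on the compact $M$ and $\det D\ph\to\det D\z\ph$ uniformly, so $\det D\ph>0$ once $\|\fbs[\z\ph]\|_{\bs{Y}}$ is small. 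Injectivity would follow from the Riemannian analogue of the classical fact (cf.\ \cite{ciarlet}) that an injective $\Ccal^1$ immersion of a compact manifold remains injective under sufficiently small $\Ccal^1$-perturbations; choosing $\ep_2\in(0,\ep_1)$ below the resulting threshold finishes the claim.

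I expect the main obstacle to be the injectivity assertion in (c). Unlike orientation-preservation, which is a pointwise open condition stable under uniform convergence of $D\ph$, global injectivity is nonlocal, and making the Euclidean perturbation lemma rigorous in the Riemannian setting requires the most care: one must either transfer it through local charts of $N$ (covering $\z\ph(M)$ by finitely many charts and controlling overlaps), or argue directly that $\z\ph$ is a homeomorphism onto $\z\ph(M)$ and that $\exp_{\z\ph}\xi$ stays uniformly $\Ccal^1$-close to $\z\ph$ so as to preserve injectivity near and away from the diagonal. By contrast, the identification $\bs{\Acal}'[0]=\bs{\Acal}^{\lin}$ in (a) is delicate only through the vanishing-at-the-natural-state argument, and the contraction in (b) is routine once the operator norms entering \eqref{delta1} and \eqref{small-f[0]} are tracked.
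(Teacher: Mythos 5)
Your parts (a) and (b) follow the paper's proof essentially step for step: the same identification $\bs{\Acal}'[0]=\bs{\Acal}^{\lin}$ resting on the natural-state assumption (the paper imports this from the linearization estimates \eqref{a-2}--\eqref{a-3}; you obtain it by an explicit product-rule decomposition in which the term coming from the variation of the $\xi$-dependent divergence operator vanishes because $\Tbs[\z\ph]=0$ --- the same underlying fact), invertibility of $\bs{\Acal}'[0]$ from Theorem \ref{exist-lin}(b) with $\Ga_2=\emptyset$, and the same contraction $\Bcal[\xi]:=\xi-\bs{\Acal}'[0]^{-1}\bs{\Acal}[\xi]$ on the closed ball of radius $\de_1$, with the identical rearrangement of \eqref{delta1} giving both the contraction constant and the self-mapping property via $\bs{\Acal}[0]=\fbs[\z\ph]$. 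The orientation-preservation argument in (c) also matches the paper (a priori bound $\|\xi\|_X\leq C\,\|\fbs[\z\ph]\|_{\bs{Y}}$, continuity of $\eta\mapsto\det D(\exp_{\z\ph}\eta)$ in $\Ccal^0(M)$, compactness of $M$). The one genuine divergence is injectivity. You invoke openness of embeddings of a compact manifold in the $\Ccal^1$-topology and correctly flag it as the delicate step; that route is viable (it is a standard, metric-independent differential-topology fact, so no ``Riemannian analogue'' is really needed), but it forces a further shrinking of $\ep_2$ and the chart-covering bookkeeping you describe. The paper sidesteps all of this by exploiting the Dirichlet boundary condition: since $\ph=\exp_{\z\ph}\xi$ coincides with $\z\ph$ \emph{exactly} on $\pa M$, $\z\ph$ is injective, and $\det D\ph>0$ on $M$, injectivity follows at once from the degree-theoretic result cited as Theorem 5.5-2 in \cite{ciarlet} --- no perturbation lemma, and no smallness requirement beyond the one already imposed for orientation preservation. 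Note that the observation at the end of your own part (b), that $\xi\in W^{1,p}_0(TM)$ forces $\ph=\z\ph$ on $\Ga$, is precisely what makes this shortcut available.
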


\begin{proof} 
(a) 
It is clear from the discussion preceding the theorem that ${\bs{\Acal}}\in\Ccal^1({B_X(\de)},{\bs{Y}})$. 
Let ${\xi}\in {B_X(\de)}$ and let $\ph:=\exp_{{\z\ph}}{\xi}$. We have seen in Section \ref{sect8} (relations \eqref{a-2} and \eqref{a-3}) that
$$
{\divop}\, {\Tbs}[\ph] 
+ {\fbs}[\ph] 
={\z\divop}\, {\Tbs^{\lin}}[{\xi}]
+ {\fbs}^{\aff}[{\xi}]
+ o(\|{\xi}\|_{\Ccal^{1}(TM)}), 
$$
where ${\divop}:={\divop}[\ph]$ and ${\z\divop}:={\divop}[{\z\ph}]$ denote the divergence operators induced by the connections ${\na}:={\na}[\ph]$ and ${\zz\na}:={\na}[{\z\ph}]$, respectively. 

Using the definitions of the mappings ${\fbs}^{\aff}$, ${\bs{\Acal}}^{\lin}$, and ${\bs{\Acal}}$ (see \eqref{fbs-lin}, \eqref{A-lin}, and \eqref{Acal}, respectively) in this relation, we deduce that
$$
{\bs{\Acal}}[{\xi}]={\fbs}[{\z\ph}] + {\bs{\Acal}}^{\lin}[{\xi}] + o(\|{\xi}\|_{\Ccal^{1}(TM)}).
$$
This relation shows that ${\bs{\Acal}}'[0]={{\bs{\Acal}}^{\lin}}$. Since ${\bs{\Acal}}^{\lin}$ is precisely the differential operator appearing in Theorem \ref{exist-lin}(b), and since assumption \eqref{small-lin-f} of Theorem \ref{exist} is the same as assumption \eqref{small-lin-fh} of Theorem \ref{exist-lin} when $\Ga_2=\emptyset$, Theorem \ref{exist-lin}(b) implies that ${\bs{\Acal}}'[0]\in \Lcal(X,{\bs{Y}})$ is bijective and ${\bs{\Acal}}'[0]^{-1}\in \Lcal({\bs{Y}},X)$. Note in passing that this property can be used to prove an existence theorem to the equations of nonlinear elasticity by means of the local inversion theorem, instead of the Newton's method used below: see Remark \ref{rk-exist}(a) at the end of this proof. 

(b)
The idea is to prove that the relations 
\begin{equation}
\label{xi-k}
{\xi}_1:=0 \text{ and } {\xi}_{k+1}:={\xi}_k-{\bs{\Acal}}'[0]^{-1}{\bs{\Acal}}[{\xi}_k], \ k\geq 1,  
\end{equation}
define a convergent sequence in $X$, since then its limit will be a zero of ${\bs{\Acal}}$. This will be done by applying the contraction mapping theorem to the mapping $\Bcal: V\subset {B_X(\de)} \to {\bs{Y}}$, defined by
$$
\Bcal[{\xi}]:={\xi}-{\bs{\Acal}}'[0]^{-1}{\bs{\Acal}}[{\xi}].
$$
The set $V$ has to be endowed with a distance that makes $V$ a complete metric space and must be defined in such a way that $\Bcal$ be a contraction and $\Bcal[V]\subset V$ (the set $\Bcal[V]$ denotes the image of $V$ by $\Bcal$).

Since the mapping ${\bs{\Acal}}': {B_X(\de)}\to \Lcal(X,{\bs{Y}})$ is continuous, it is clear that $\ep_1>0$. Hence there exists ${\de_1}\in (0,\de)$ such that 
\begin{equation}
\label{delta1bis}
\|{\fbs}[{\z\ph}] \|_{\bs{Y}} < {\de_1}\Big(\|{\bs{\Acal}}'[0]^{-1}\|_{\Lcal({\bs{Y}},X)}^{-1}
-\sup_{\|{\xi}\|_X < {\de_1}} \|{\bs{\Acal}}'[{\xi}]-{\bs{\Acal}}'[0]\|_{\Lcal(X,{\bs{Y}})}\Big). 
\end{equation}
Note that this definition is the same as that appearing in the statement of the theorem; cf \eqref{delta1}. 
So pick such a ${\de_1}$ and define
$$
V={B_X({\de_1}]}:=\{{\xi}\in X; \ \|{\xi}\|_X\leq {\de_1}\}
$$
as the closed ball in $X$ of radius $\de_1$ centered at the origin of $X$. As a closed subset of the Banach space $(X,\|\cdot\|_X)$, the set ${B_X({\de_1}]}$ endowed with the distance induced by the norm $\|\cdot\|_X$ is a complete metric space. Besides, the mapping $\Bcal:{B_X({\de_1}]}\to X$ is well defined since ${B_X({\de_1}]}\subset {B_X(\de)}$. It remains to prove that $\Bcal$ is a contraction and that $\Bcal[{B_X({\de_1}]}]\subset {B_X({\de_1}]}$. 

Let ${\xi}$ and ${\eta}$ be two elements of ${B_X({\de_1}]}$. Then
$$
\|\Bcal[{\xi}]-\Bcal[{\eta}]\|_X
\leq \| {\bs{\Acal}}'[0]^{-1}\|_{\Lcal({\bs{Y}},X)}\|{\bs{\Acal}}[{\eta}]-{\bs{\Acal}}[{\xi}]-{\bs{\Acal}}'[0]({\xi}-{\eta}) \|_{\bs{Y}}.
$$
Applying the mean value theorem to the mapping ${\bs{\Acal}}\in \Ccal^1({B_X(\de)},{\bs{Y}})$ next implies that
$$
\|\Bcal[{\xi}]-\Bcal[{\eta}]\|_X
\leq C_{\Bcal}\|{\xi}-{\eta}\|_X,
$$
where
$$
C_{\Bcal}:=\| {\bs{\Acal}}'[0]^{-1}\|_{\Lcal({\bs{Y}},X)}
\sup_{\|{\zeta}\|< {\de_1}}\|{\bs{\Acal}}'[{\zeta}]-{\bs{\Acal}}'[0]\|_{\Lcal(X,{\bs{Y}})}.
$$
But the inequality \eqref{delta1bis} implies that 
$$
\aligned
C_{\Bcal}
& =1- \| {\bs{\Acal}}'[0]^{-1}\|_{\Lcal({\bs{Y}},X)} 
\Big(\| {\bs{\Acal}}'[0]^{-1}\|_{\Lcal({\bs{Y}},X)}^{-1}-\sup_{\|{\zeta}\|< {\de_1}}\|{\bs{\Acal}}'[{\zeta}]-{\bs{\Acal}}'[0]\|_{\Lcal(X,{\bs{Y}})}\Big)\\
& < 1- \| {\bs{\Acal}}'[0]^{-1}\|_{\Lcal({\bs{Y}},X)} \frac{\|{\fbs}[{\z\ph}] \|_{\bs{Y}}}{\de_1}
\leq 1, 
\endaligned
$$
which shows that $\Bcal$ is indeed a contraction on ${B_X({\de_1}]}$. 

Let ${\xi}$ be any element of ${B_X({\de_1}]}$. Since
$$
\|\Bcal[{\xi}]\|_X
\leq \|\Bcal[0]\|_X+\|\Bcal[{\xi}]-\Bcal[0]\|_X
\leq \|{\bs{\Acal}}'[0]^{-1}{\fbs}[{\z\ph}] \|_X+C_{\Bcal} {\de_1},
$$
using the above expression of $C_{\Bcal}$ and the inequality \eqref{delta1bis} yields
 $$
\|\Bcal[{\xi}]\|_X
\leq \| {\bs{\Acal}}'[0]^{-1}\|_{\Lcal({\bs{Y}},X)}
\Big( 
\|{\fbs}[{\z\ph}] \|_{\bs{Y}} 
+{\de_1}\sup_{\|{\zeta}\|< {\de_1}}\|{\bs{\Acal}}'[{\zeta}]
-{\bs{\Acal}}'[0]\|_{\Lcal(X,{\bs{Y}})}
\Big)
< {\de_1}, 
$$
which shows that $\Bcal[{B_X({\de_1}]}]\subset {B_X({\de_1}]}$. 

The assumptions of the contraction mapping theorem being satisfied by the mapping $\Bcal$, there exists a unique ${\xi}\in {B_X({\de_1}]}$ such that $\Bcal[{\xi}]={\xi}$, which means that ${\xi}$ satisfies the equation ${\bs{\Acal}}[{\xi}]=0$. This equation being equivalent to the boundary value problem \eqref{bvp-xi}, the deformation $\ph:=\exp_{\z\ph}{\xi}$ satisfies the boundary value problem \eqref{bvp}-\eqref{cl}.

(c) 
The contraction mapping theorem shows that the rate at which the sequence ${\xi}_k=\Bcal^k[0]$,  $k=1,2,...$, converges to the solution $\xi$ of the equation ${\bs{\Acal}}[{\xi}]=0$ is 
\begin{equation}
\label{xik-xi}
\|{\xi}_k-{\xi}\|_X\leq \frac{(C_{\Bcal})^k}{1-C_{\Bcal}}\|\Bcal[0]\|_X.
\end{equation}
In particular, for $k=0$, 
\begin{equation}
\label{f-zeta}
\|{\xi}\|_X 
\leq \frac1{1-C_{\Bcal}}\|\Bcal[0]\|
\leq \frac{\|{\bs{\Acal}}'[0]^{-1}\|_{\Lcal({\bs{Y}},X)}}{1-C_{\Bcal}}\|{\fbs}[{\z\ph}] \|_{\bs{Y}}
\leq C_{\bs{\Acal}}\|{\fbs}[{\z\ph}] \|_{\bs{Y}},
\end{equation}
where
$$
C_{\bs{\Acal}}:=\Big\{\|{\bs{\Acal}}'[0]^{-1}\|_{\Lcal({\bs{Y}},X)}^{-1}-\sup_{\|{\zeta}\|< {\de_1}}\|{\bs{\Acal}}'[{\zeta}]-{\bs{\Acal}}'[0]\|_{\Lcal(X,{\bs{Y}})}\Big\}^{-1}.
$$

The Sobolev embedding $W^{m+2,p}(TM)\subset C^1(TM)$ being continuous, the mapping 
$$
{\eta}\in {B_X({\de_1}]} \to \psi:=\exp_{\z\ph}{\eta}\in \Ccal^1(M,N) \to \det (D\psi) \in \Ccal^0(M)
$$
is also continuous. Besides $\min_{z\in M}\det (D\z\ph(z))>0$ since $\z\ph$ is orientation-preserving and $M$ is compact. It follows that there exists $0<{\de_2}\leq {\de_1}$ such that
$$
\|{\eta}\|_X<{\de_2} \Rightarrow \|\det (D\psi)-\det (D\z\ph)\|_{\Ccal^0(M)}< \min_{z\in M}\det (D\z\ph(z)),
$$
which next implies that 
\begin{equation}
\label{o-p}
\|{\eta}\|_X<{\de_2} \Rightarrow \det (D\psi(x))>0 \text{ for all } x\in M.
\end{equation}

Assume now that the applied forces satisfy $\|{\fbs}[{\z\ph}] \|_{\bs{Y}}<\ep_2:={{\de_2}}/{C_{\bs{\Acal}}}$. Then the relations \eqref{f-zeta} and \eqref{o-p} together show that the deformation $\ph:=\exp_{\z\ph}{\xi}$,  where ${\xi}\in {B_X({\de_1}]}$ denotes the solution of the equation ${\bs{\Acal}}[{\xi}]=0$, satisfies
$$
\det (D\ph(x))>0 \text{ for all } x\in M,
$$
which means that $\ph$ is orientation-preserving. 

Moreover, since $\ph=\z\ph$ on ${\pa M}$ and $\z\ph:M\to N$ is injective, the inequality $\det D\ph(x)>0$ for all $x\in M$ implies that $\ph:M\to N$ is injective; cf. \cite[Theorem 5.5-2]{ciarlet}.
\end{proof}

\begin{remark}
\label{rk-exist}
(a) 
The mapping ${\bs{\Fcal}}:{B_X(\de)}\subset X \to {\bs{Y}}$ defined by 
$$
{\bs{\Fcal}}[{\xi}]:={\bs{\Acal}}[{\xi}]-{\fbs}[{\z\ph}] 
$$
satisfies the assumptions of the local inversion theorem at the origin of $X$ if the assumption \eqref{small-lin-f} is satisfied; see part (a) of the proof of Theorem \ref{exist}. Hence there exist constants $\de_3>0$ and $\ep_3>0$ such that the equation ${\bs{\Fcal}}[{\xi}]=-{\fbs}[{\z\ph}]  $, or equivalently 
$$
{\bs{\Acal}}[{\xi}]=0,
$$
has a unique solution ${\xi}\in X$, $\|{\xi}\|_X<\de_3$, if $\|{\fbs}[{\z\ph}] \|_{\bs{Y}}<\ep_3$. Using Newton's method instead of the local inversion theorem in the proof of Theorem \ref{exist} provides (as expected) explicit estimations of the constants $\de_3$ and $\ep_3$, namely $\de_3=\de_1$ and $\ep_3=\ep_1$ (see \eqref{small-f[0]} and \eqref{delta1} for the definitions of $\ep_1$ and $\de_1$). 

(b) The proof of Theorem \ref{exist} provides an iterative procedure for numerically computing approximate solutions to the equations of nonlinear elasticity in a Riemannian manifold, as well as an error estimate: see relations \eqref{xi-k} and \eqref{xik-xi} above. Another iterative procedure, this time in classical nonlinear elasticity, can be found in \cite[Chapter 6.10]{ciarlet}; the corresponding error estimate is given in Theorem 6.13-1 of \cite{ciarlet}.

(c) Previous existence theorems for the equations of nonlinear elasticity in Euclidean spaces (see, for instance, \cite{ciarlet,IE,valent}) can be obtained from Theorem \ref{exist} by making additional assumptions on the applied forces: either ${\Dlaw{\fbs}}-{\fbs}[{\z\ph}] =0$ in the case of ``dead" forces, or ${\Dlaw{\fbs}} \in \Ccal^{m}(M\times TM\times T^1_1M, T^*\!M\otimes \La^nM)$ in the case of ``live" forces. 

(d) Theorem \ref{exist-lin} \textit{(a)} and \textit{(b)} can be generalized to mixed Dirichlet-Neumann boundary conditions provided that $\ov\Ga_1\cap\ov\Ga_2=\emptyset$, since in this case the regularity theorem for elliptic systems of partial differential equations still holds. 
\end{remark}

%==================================================================

\section{Concluding remarks}
\label{sect10}

The equations of nonlinear elastostatics  (Section \ref{sect6}), as well as those of linearized elastostatics (Section \ref{sect7}), satisfied by an elastic body subjected to applied body and surface forces, have been generalized from their classical formulation, which is restricted to the particular case where the body is immersed in the three-dimensional Euclidean space, to a new formulation, which is valid in the general case where the body is immersed in an arbitrary Riemannian manifold. This new formulation is intrinsic, i.e., it does not depend on the choice of the local charts of the Riemannian manifold, in contrast to the classical formulation, which depends on the choice of Cartesian coordinates in the three-dimensional Euclidean space. 

One application of this new formulation of the equations of nonlinear elastostatics is to model three-dimensional bodies whose geometry in a reference configuration is described by several local charts (as for instance spherical coordinates in one part of the body, and cylindrical coordinates in another part). Therefore the assumption that the body be described by a global chart, which is made in most, if not all, classical textbooks (see, for instance, \cite{ciarletG}) is not needed in our approach. Another application is to to model two-dimensional bodies whose deformations are constrained to a given surface (as for instance a cylinder deforming only longitudinally), then to numerically compute approximate solutions for the deformation of such bodies subjected to specific body and surface forces.

The main novelty of this paper is the definition of the stress tensor field of an elastic body, and of the equations satisfied by the corresponding deformation, in any Riemannian manifold. The classical Cauchy stress tensor field, first Piola-Kirchhoff stress tensor field, and the second Piola-Kirchhoff stress tensor field, are obtained from the stress tensor field defined in this paper by letting the Riemannian manifold be the three-dimensional Euclidean space, by choosing a particular volume form in the reference configuration of the body, and by using a Cartesian frame in the three-dimensional Euclidean space.

Another novelty is the definition of a new kind of stress tensor field, denoted $\Tbs[\ph]$ in Section \ref{sect4}, which is essential in proving the existence of a solution to the equations of nonlinear elasticity in a Riemannian manifold; cf.~Theorem \ref{exist}. This tensor field has not been defined in classical elasticity since the first Piola-Kirchhoff stress tensor field associated with a deformation $\ph$, which in the general case where the body is immersed into a Riemannian manifold $(N,{\nh g})$ is defined by 
$$
{\z{\nt T}}(x):=({\z{\nt T}})^i_\al(x)\frac{\pa}{\pa x^i}(x)\otimes dy^\al(\ph(x)), \ x\in M, 
$$
can be identified in the particular case where $(N,{\nh g})$ is the three-dimensional Euclidean space with the tensor field (with the notations of this paper)
$$
{T}_{\textup{P-K}}(x):=({\z{\nt T}})^i_\al(x)\frac{\pa}{\pa x^i}(x)\otimes {\nh e}^\al, \ x\in M, 
$$
by choosing $(y^\al)$ as the Cartesian coordinates of a generic point $y$ in the three-dimensional Euclidean space with respect to a given orthonormal frame $\{O,({\nh e}^\al)\}$. The advantage of this identification is that ${\nh e}^\al$ does not depend on the unknown deformation $\ph$, while $dy^\al(\ph(x))$ does. Such an  identification is obviously not possible if $(N,{\nh g})$ is an arbitrary Riemannian manifold. 

The main result of this paper is Theorem \ref{exist}, which establishes the existence of a solution to the equations of nonlinear elastostatics in a Riemannian manifold. This existence theorem at the same time generalizes several theorems of the same kind in classical elasticity, and weakens their assumptions. In particular, the applied forces considered here are more general than those in classical elasticity, the smallness assumption on these forces is explicit, and a new algorithm is provided for approaching the (exact) solution of the equations of nonlinear elastostatics. Another use of Theorem \ref{exist} in classical nonlinear elasticity is to prove an existence theorem for two-dimensional elastic bodies whose deformed configurations are contained in a given surface of the three-dimesional Euclidean space. 

Finally, the proof of Theorem \ref{exist} is to our knowledge new even in classical elasticity, since is based on the Newton's method for finding the zeroes of a nonlinear mapping, rather than on the implicit, or the inverse, function theorem.

%==================================================================

\section*{Acknowledgments} 
The authors are grateful to P.G. Ciarlet for his comments on a preliminary version of this paper. 
The authors were supported by the Agence Nationale de la Recherche through the grants ANR 2006-2--134423 and ANR SIMI-1-003-01. This paper was completed when the second author (PLF) was in residence at the Mathematical Sciences Research Institute in Berkeley, California, during the Fall Semester 2013 and was supported by the National Science Foundation under Grant No. 0932078 000.

%==================================================================

\end{document}